\tikzstyle{dot} = [inner sep=0pt,thick,fill=black,circle,minimum size=2.5pt]
\tikzstyle{line} = [draw, -latex]
\newtheorem{thm}{Theorem}[section]
\newtheorem{cor}[thm]{Corollary}
\newtheorem{lem}[thm]{Lemma}
\newtheorem{exm}{Example}
\newtheorem{prop}[thm]{Proposition}
\newtheorem{defn}[thm]{Definition}
\newtheorem{rem}[thm]{Remark}
\newtheorem{defn-prop}[thm]{Definition-Proposition}
\newcommand{\qihao}{\fontsize{7.25pt}{\baselineskip}\selectfont}
\begin{document}

\begin{center}
{\Large \bf Cluster automorphism groups of cluster algebras with coefficients
\footnote{Supported by the NSF of China (Grants 11131001)}}

\bigskip

{\large Wen Chang and
 Bin Zhu}

{\small
\begin{tabular}{cc}
Department of Mathematical Sciences & Department of Mathematical
Sciences
\\
Tsinghua University & Tsinghua University
\\
  100084 Beijing, P. R. China &   100084 Beijing, P. R. China
\\
{\footnotesize E-mail: changw12@mails.tsinghua.edu.cn} &
{\footnotesize E-mail: bzhu@math.tsinghua.edu.cn}
\end{tabular}
}
\bigskip


\end{center}

\begin{abstract}
We study the cluster automorphism group of a skew-symmetric cluster algebra with geometric coefficients. For this, we introduce the notion of gluing free
cluster algebra, and show that under a weak condition the cluster automorphism group of a gluing free cluster algebra is a subgroup of the
cluster automorphism group of its principal part cluster algebra (i.e. the corresponding cluster algebra without coefficients). We show that several classes of cluster algebras with coefficients are gluing
free, for example, cluster algebras with principal coefficients, cluster algebras with universal geometric coefficients, and cluster
algebras from surfaces (except a 4-gon) with coefficients from boundaries. Moreover, except four kinds of surfaces, the cluster
automorphism group of a cluster algebra from a surface with coefficients from boundaries is isomorphic to the cluster automorphism group of
its principal part cluster algebra; for a cluster algebra with principal coefficients, its cluster automorphism group is isomorphic to the automorphism group of its initial quiver.
\end{abstract}

\def\s{\stackrel}
\def\Longrightarrow{{\longrightarrow}}
\def\A{\mathcal{A}}
\def\B{\mathcal{B}}
\def\C{\mathcal{C}}
\def\D{\mathcal{D}}
\def\F{\mathcal{F}}
\def\T{\mathcal{T}}
\def\R{\mathcal{R}}
\def\P{\mathcal{P}}
\def\S{\Sigma}
\def\H{\mathcal{H}}
\def\U{\mathscr{U}}
\def\V{\mathscr{V}}
\def\M{{\bf{Mut}}}
\def\L{\mathcal{L}}
\def\W{\mathscr{W}}
\def\X{\mathscr{X}}
\def\Y{\mathscr{Y}}
\def\x{{\mathbf x}}
\def\ex{{\mathbf{ex}}}
\def\fx{{\mathbf{fx}}}
\def\I{\mathcal {I}}
\def\add{\mbox{add}}
\def\Aut{\mbox{Aut}}
\def\coker{\mbox{coker}}
\def\deg{\mbox{deg}}
\def\diag{\mbox{diag}}
\def\dim{\mbox{dim}}
\def\End{\mbox{End}}
\def\Ext{\mbox{Ext}}
\def\Hom{\mbox{Hom}}
\def\Gr{\mbox{Gr}}
\def\id{\mbox{id}}
\def\Im{\mbox{Im}}
\def\ind{\mbox{ind}}
\def\mod{\mbox{mod}}
\def\mul{\multiput}
\def\c{\circ}
\def \text{\mbox}
\newcommand{\homeo}{\textup{Homeo}^+(S,M)}
\newcommand{\homeoo}{\textup{Homeo}_0(S,M)}
\newcommand{\mg}{\mathcal{MG}(S,M)}
\newcommand{\mmg}{\mathcal{MG}_{\bowtie}(S,M)}

\newcommand{\Z}{\mathbb{Z}}
\newcommand{\Q}{\mathbb{Q}}
\newcommand{\N}{\mathbb{N}}

\def\MM#1{{\bf{(CM#1)}}}

\hyphenation{ap-pro-xi-ma-tion}

\textbf{Key words.} Cluster algebra; Cluster automorphism group; Gluing free
cluster algebra; Cluster algebra from a surface; Universal geometric cluster algebra.
\medskip

\textbf{Mathematics Subject Classification.} 16S99; 16S70; 18E30


\section{Introduction}

After introduced by Sergey Fomin and Andrei Zelevinsky in \cite{FZ02}, cluster algebras have been showed to be linked to various areas of mathematics, see for examples, \cite{GLS08, F10, L10, R10, K12, M14}, and so on. However, as an algebra itself with combinatorical structure, it is natural and interesting to study the symmetries of a cluster algebra. For this, Assem, Schiffler and Shramchenko \cite{ASS12} introduced cluster automorphisms and the cluster automorphism group of a cluster algebra without coefficients. These concepts and some similar ones are studied in many papers \cite{S10,ASS12,BQ12,ASS13,BD13,KP13,N13,Z06}. In
this paper, we initial the study of the cluster automorphism group of a cluster algebra with coefficients. We are interested in the dependence of the cluster automorphism group on the choice of coefficients. We are also interested in the relations between the cluster automorphism groups of different cluster algebras.\\

We consider in this paper the skew-symmetric cluster algebras of geometric type, that is, the cluster algebras determined by ice quivers without loops nor $2$-cycles. An ice quiver is an oriented diagram $Q$ associated a subset $F=\{n+1, n+2, \cdot \cdot \cdot, n+m\}$ (the set of frozen vertices) of its vertex set $Q_0=\{1, 2, \cdot \cdot \cdot, n+m\}$. The full subquiver of $Q$ with vertex set $Q_0\setminus F$ (the set of exchangeable vertices) is called the principal part of $Q$, and we denote it by $Q^{ex}$. By associating each vertex $1\leqslant i \leqslant n+m$ of $Q$ an indeterminate element $x_i$, we have a set $\x=\{x_1, x_2, \cdot \cdot \cdot, x_{n+m}\}$, which is called a cluster. Then the cluster algebra $\A_Q$ is a $\Z-$subalgebra of the rational function field $\F=\Q(x_1,\cdot \cdot \cdot, x_{n+m})$ generated by variables in clusters obtained by iterated operations so called mutations from the initial cluster $\x$. The variables labeled by frozen vertices are the coefficients of $\A_Q$. The cluster algebra $\A_{Q^{ex}}$ is called the principal part cluster algebra of $\A_Q$, it is a coefficient free cluster algebra.\\

A cluster automorphism of $\A_Q$ is an algebra automorphism which maps the initial cluster $\x$ to a cluster and commutes with the mutation. An easy observation is that the group $Aut(\A_Q)$ depends on the choice of coefficients. To describe this more precisely, we classify the coefficients of $\A_Q$ by considering the relations of frozen vertices in $Q$ with
respect to the exchangeable vertices. Let $j$ and $k$ be two frozen vertices, if the number of arrows from $j$ to $i$ (or from $i$ to $j$) is equal to the number of arrows from $k$ to $i$ (or from $i$ to $k$) for any exchangeable vertex $i$, then we say that these two vertices $j$ and $k$ are strictly glueable (Definition \ref{definition gluing free cluster algebras}). It is not hard to see that exchanging the coefficients $x_j$ and $x_k$ induces a cluster automorphism of $\A_Q$ (see Proposition \ref{Prop of CA}). The cluster algebra $\A_Q$ is called gluing free if there exist no frozen vertices are strictly glueable in $Q$ (see Definition \ref{def: gluing free cluster algebra}). Then we prove that for a gluing free cluster algebra $\A_Q$ with at least two exchangeable vertices in $Q$, the group $Aut(\A_Q)$ is a subgroup of $Aut(\A_{Q^{ex}})$ (see Theorem \ref{CCA subgroup of CA}). \\

To prove Theorem \ref{CCA subgroup of CA}, we consider the exchange graph $E_Q$ of $\A_Q$, which is a n-regular connected graph whose vertices are clusters and whose edges are labeled by mutations. We introduce the automorphism group $Aut(E_Q)$ of $E_Q$ which consisting of automorphisms as a graph\footnote{We owe this definition to Thomas Br\"ustle.}. Then by using the fact that $E_Q$ is independent on the choice of coefficients\cite{CKLP13}, we prove Theorem \ref{CCA subgroup of CA} by viewing $Aut(\A_Q)$ and $Aut(\A_{Q^{ex}})$ as subgroups of $Aut(E_Q)$.\\

In this paper, three kinds of cluster algebras are of particular importance. The first one is the universal geometric cluster algebra
(Definition \ref{definition coefficient specilazation}), which is slightly different from the initial universal cluster algebra introduced in
\cite{FZ07}. It is a universal object, in the view point of coefficient specialization, in the set of cluster algebras with the same principal
part. The second one is the cluster algebra arising from an oriented marked Reimman surface with boundary\cite{FST08}. The another kind is a cluster algebra with principal coefficients. We show that a cluster algebra with universal geometric coefficients, a cluster algebra (with coefficients) from a surface (except a 4-gon), and a cluster algebra with principal coefficients are gluing free in Proposition \ref{pgf of UCA}, Proposition \ref{prop of SM pgf}, and Proposition \ref{pgf of PCA} respectively. Thus by Theorem \ref{CCA subgroup of CA}, the automorphism groups of these cluster algebras are all the subgroups of the corresponding principal part cluster algebras.\\

Generally, for a gluing free cluster algebra $\A_{Q}$, $Aut(\A_{Q})$ may be a proper subgroup of $Aut(\A_{Q^{ex}})$ (see Example \ref{universal subgroup}). But for the cluster algebras from oriented marked Riemann surfaces, these two groups are isomorphic with each other (Theorem \ref{theorem SCA}).
For a cluster algebra with principal coefficients, the cluster automorphism group is isomorphic to the automorphism group of the initial quiver (Theorem \ref{thm:principal coefficients}).\\

As showed in Example \ref{universal subgroup}, for an universal geometric cluster algebra $\A^{univ}$ with principal part $\A$, $Aut(\A^{univ})$ may be a proper subgroup of $Aut(\A)$. However, we prove in the subsequent paper \cite{CZ15a} that $Aut(\A^{univ})$ is always isomorphic to $Aut(\A)$, if $\A$ is a cluster algebra of finite type and $\A^{univ}$ is the $FZ$-universal cluster algebra introduced in \cite{FZ07}. Generally, $Aut(\A_{Q})$ may be a proper subgroup of $Aut(E_{Q})$, even though $\A_{Q}$ is coefficient free. In the subsequent paper \cite{CZb15} we prove that for a coefficient free cluster algebra $\A_Q$, the two groups $Aut(\A_{Q})$ and $Aut(E_{Q})$ are isomorphic, if $\mathcal{A}$ is of finite type, excepting types of rank two and type $F_4$, or $\mathcal{A}$ is of skew-symmetric finite mutation type, that is, a cluster algebra with only finite quivers up to isomorphisms.\\

The paper is organized as follows. In section \ref{section Preliminaries}, we recall some basic notions on cluster algebras, especially cluster
automorphisms, coefficient specializations and cluster algebras arising from surfaces. In subsection \ref{section gluing free cluster algebras} we define the notion of gluing free cluster algebra and prove some properties. We introduce and consider the automorphism group of an exchange graph in subsection \ref{section exchange graph}. Finally, in subsection \ref{section main results} we prove some main results.

\section{Preliminaries}\label{section Preliminaries}

\subsection{Cluster algebras}\label{section cluster algebras}
We recall that a {\it quiver} is a quadruple $(Q_0,Q_1,s,t)$ consisting of a set of {\it vertices} $Q_0$, of a set of {\it arrows} $Q_1$, and
of two maps $s, t$ which map each arrow $\alpha \in Q_1$ to its {\it source} $s(\alpha)$ and its {\it target} $t(\alpha)$, respectively. An
{\it ice quiver} is a quiver $Q$ associated a subset $F$ (the set of {\it frozen vertices}) of $Q_0$. The full subquiver $Q^{ex}$ of
$Q$ with vertex set $Q_0\setminus F$ (the set of {\it exchangeable vertices}) is called the {\it principal part} of $Q$. The opposite quiver of $Q$ is a quiver $Q^{op}$ obtained from $Q$ by reversing all the arrows. We always assume in this
paper that there are no loops nor 2-cycles in an ice quiver, and no arrows between frozen vertices. We also assume that an ice quiver and its principal part are
connected. For two ice quivers $Q$ and $Q'$, an {\it isomorphism} $\sigma$ from $Q$ to $Q'$ is a bijection from $Q_0$ to $Q'_0$, which maps exchangeable vertices (and frozen vertices) to exchangeable ones (and frozen ones), such that the number of arrows from a vertex $i$ to a vertex $j$ in $Q_0$ is equal to the number of arrows from $\sigma(i)$ to $\sigma(j)$ in $Q'_0$, we write $Q\cong Q'$ to quivers which are isomorphic with each other. Then for a quiver $Q$, all of its automorphisms consist a group $Aut(Q)$, we call it the {\it automorphism group} of $Q$.\\

Let $n+m=|Q_0|$ be the number of vertices in $Q$, and denote the vertices by $Q_0=\{1, 2,
\cdot \cdot \cdot, n+m\}$ and the frozen vertices by $F=\{n+1, n+2, \cdot \cdot \cdot, n+m\}$. We associate an extended skew-symmetric matrix
$B=(b_{ji})_{(n+m)\times n}$ to $Q$, where
$$b_{ji}=\left\{\begin{array}{ll}
						\sharp\{\alpha: j \to i \text{~in~} Q\} & \textrm{ if there exist arrows from j to i in Q}~; \\
                        -\sharp\{\alpha: i \to j \text{~in~} Q\} & \textrm{ if there exist arrows from i to j in Q}~; \\
						0 & \textrm{ if there exist no arrows between i and j in Q}~. \\
					\end{array}\right.$$
We call $B$ the {\it exchange matrix} of $Q$, its upper $n\times n$ part the {\it principal part} of $B$ and its lower $m\times n$ part the {\it frozen part} of
$B$. Let $i$ be an exchangeable vertex of $Q$, we define a mutation of $Q$ and a mutation of $B$ as follows.\\

\begin{defn}\cite{FZ02}\label{def: quiver mutation}
\begin{itemize}
\item[(a)]The mutation of $Q$ at $i$ is an ice quiver $\mu_i(Q, F)=(\mu_i(Q), \mu_i(F))$, where $\mu_i(F)=F$ and $\mu_i(Q)$ is obtained from $Q$ by:
\begin{itemize}
\item inserting a new arrow $\gamma: j\to k$ for each path $j\s{\alpha}\rightarrow i \s{\beta}\rightarrow k$;
\item inverting all arrows passing through $i$;
\item removing the arrows in a maximal set of pairwise disjoint $2$-cycles;
\item removing the arrows between frozen vertices.
\end{itemize}
\item[(b)] The mutation of $B$ at $i$ is a matrix $\mu_i(B)=(b'_{jk})_{n\times n} \in M_{n\times n}(\Z)$ given by
					$$b'_{jk} = \left\{\begin{array}{ll}
						- b_{jk} & \textrm{ if } k=i \textrm{ or } j=i~; \\
						b_{jk} + \frac 12 (|b_{jk}|b_{ki} + b_{jk}|b_{ki}|) & \textrm{ otherwise.}
					\end{array}\right.$$
\end{itemize}
\end{defn}
Then these two kinds of mutations correspond with each other, that is, $\mu_i(B)$ is the exchange matrix of $\mu_i(Q)$. We always write $\mu_i(Q)$ to $\mu_i(Q, F)$ for brevity. By associating each vertex $1\leqslant i \leqslant n+m$ in $Q_0$ an indeterminate element $x_i$, we have a set $\x=\{x_1, x_2, \cdot \cdot \cdot, x_{n+m}\}$. We call $\S=(Q,\x)$ a seed, $\x$ a cluster and an element in $\x$ a cluster variable.
\begin{defn}\cite{FZ02}\label{def: seed mutation}
Let $i$ be an exchangeable vertex of $Q$, the mutation of the seed $\S$ in the direction $i$ (or $x_i$) is a seed $\mu_i(\S)=(\mu_i(Q),\mu_i(\x))$, where $\mu_i(\x) = (\x \setminus \{x_i\}) \sqcup \{x'_i\}$ and:
\begin{equation}
\label{eq: exchange relations}
x_ix'_i = \prod_{\substack{1\leqslant j\leqslant n~; \\ b_{ji}>0}} {x_j}^{b_{ji}} + \prod_{\substack{1\leqslant j\leqslant n~; \\ b_{ji}<0}} {x_j}^{-b_{ji}}
\end{equation}
We call (\ref{eq: exchange relations}) the {\it exchange relation} of the mutation, and call \begin{equation}
p_{i}^{+}={\prod_{\substack{1\leqslant j\leqslant n~; \\ b_{ji}>0}} {x_j}^{b_{ji}}}~~~~~~~~~and~~~~~~~~p_{i}^{-}={\prod_{\substack{1\leqslant j\leqslant n~; \\ b_{ji}<0}} {x_j}^{-b_{ji}}}
\end{equation}
the {\it coefficients} of the exchange relation.
\end{defn}
It is easy to check that a seed mutation is an involution, that is $\mu_i\mu_i(\S)=\S$.

\begin{defn}\cite{FZ02}\label{def: cluster algebra}
Denote by ${\X}$ the union of all possible clusters obtained from $\x$ by iterated mutations. The {\it cluster algebra} $\A_Q$ is a $\Z-$subalgebra of the rational function field $\F=\Q(x_1,\cdot \cdot \cdot, x_{n+m})$ generated by cluster variables in ${\X}$.
\end{defn}
We call the variables in $\fx=\{x_{n+1},\dots, x_{n+m}\}$ the {\it frozen
cluster variables} of $\A_Q$, and call the rest variables in ${\X}$ the {\it exchangeable cluster variables} of $\A_Q$. Then in a seed $(\tilde{Q},\tilde{\x})$, $\tilde{\x}=\tilde{\ex}\sqcup\fx$, where $\tilde{\ex}$ is the set of exchangeable cluster variables. Let $\mathbb{P}$ be the {\it coefficient group} of $\A_Q$, which is a multiplicative free abelian group generated by elements in $\fx$. We call the elements in $\mathbb{P}$ the {\it coefficients} of $\A_Q$. We denote by $\A_Q[{\mathbb{P}}^{-1}]$ the localization of $\A_Q$ at $\mathbb{P}$, which is called the {\it localized cluster algebra of $\A_Q$}. Note that $\A_Q[{\mathbb{P}}^{-1}]$ is the initial definition of the geometric cluster algebras given in \cite{FZ02}. A sequence $(y_1, \cdots , y_l)$ is called {\it $(Q,\x)$-admissible} if $y_1$ is exchangeable in $\x$ and $y_i$ is exchangeable in
$\mu_{y_{i-1}} \circ \cdots \circ \mu_{y_1}(\x)$  for every $2 \leqslant i \leqslant l$. A {\it rooted cluster algebra} associated to $\A_Q$ is a triple $(Q,\x,\A_Q)$ \cite{ADS13}. We call the pair $(Q, \x)$ the {\it initial seed} and $\x$ the {\it initial cluster}. We strictly distinguish the cluster algebras and the rooted cluster algebras in the rest of the paper.
It has been shown in \cite{GSV08} that a cluster $\tilde{\x}$ determines the quiver $\tilde{Q}$, and we denote its quiver by $Q(\tilde\x)$ and write $p_{x}$ to the vertex of $\tilde{Q}$ labeled by a variable $x$ in $\tilde{\x}$.\\

\subsection{Cluster automorphisms}
Let $(Q,\x,\A_Q)$ and $(Q',\x',\A_{Q'})$ be two rooted cluster algebras, where $\x=\ex\sqcup\fx$ and $\x'=\ex'\sqcup\fx'$. Recall from \cite{ADS13} that a {\it rooted cluster morphism} $f$ from $(Q,\x,\A_Q)$ to
$(Q',\x',\A_{Q'})$ is a ring homomorphism $f: \A_Q \to \A_{Q'}$ satisfies the following three conditions,\\
({\bf{CM1}})~ $f(\ex) \subset \ex' \sqcup \Z$~;\\
({\bf{CM2}})~ $f(\fx) \subset \x' \sqcup \Z$~;\\
({\bf{CM3}})~ for every $(f,\x,\x')$-biadmissible sequence $(y_1, \cdots , y_l)$ and any $y$ in $\x$, we have $f(\mu_{y_l} \circ \cdots \circ \mu_{y_1,\x}(y)) =
\mu_{f(y_l)} \circ \cdots \circ \mu_{f(y_1),\x'}(f(y))$. Here a {\it $(f,\x,\x')$-biadmissible sequence} $(y_1, \cdots ,
y_l)$ is a $(Q,\x)$-admissible sequence such that $(f(y_1),\cdots ,f(y_l))$ is $(Q',\x')$-admissible.

\begin{defn}
Let $\A$ and $\A'$ be two cluster algebras.
\begin{itemize}
\item[(a)] We call a map $f: \A' \to \A$ a {\it cluster isomorphism} if there exist two seeds $(Q',\x')$ and $(Q,\x)$ of $\A'$ and $\A$
    respectively, and a bijective rooted cluster morphism $\tilde{f}$ from $(Q',\x',\A')$ to $(Q,\x,\A)$ such that $f=\tilde{f}$ on $\A'$.
\item[(b)] We call $f: \A \to \A$ a {\it cluster automorphism} if $f$ is a cluster isomorphism.
\item[(c)] We call $Aut(\A)$ the cluster automorphism group of $\A$, which is a group consisting of cluster automorphisms of $\A$ under
    compositions of automorphisms.
\end{itemize}
\end{defn}
The cluster automorphism of a coefficient free cluster algebra and its equivalent characterizations is given in \cite{ASS12}. For cluster algebras with coefficients, we also have the following characterizations.
\begin{prop}\label{Prop of CA}
A $\Z$-algebra automorphism $f:\A\to\A$ is a cluster automorphism if and only if one of the following conditions is satisfied:
\begin{enumerate}
\item there exists a seed $\S=(\ex,\fx,Q)$ of $\A$, such that $f(\x)$ is the cluster in a seed $\S'=(\ex',\fx',Q')$ of $\A$ with $Q'\cong Q$ or $Q'\cong Q^{op}$, under the correspondence $p_{x}\mapsto p_{f(x)}$;
\item for every seed $\S=(\ex,\fx,Q)$ of $\A$, $f(\x)$ is the cluster in a seed $\S'=(\ex',\fx',Q')$ with $Q'\cong Q$ or $Q'\cong Q^{op}$, under the correspondence $p_{x}\mapsto p_{f(x)}$.
\end{enumerate}
\end{prop}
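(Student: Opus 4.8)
The plan is to establish the cycle of implications: $f$ is a cluster automorphism $\Rightarrow$ (2) $\Rightarrow$ (1) $\Rightarrow$ $f$ is a cluster automorphism, where (2) $\Rightarrow$ (1) is immediate since $\A$ always possesses at least one seed. Throughout I would use two facts from the preliminaries: that a cluster $\tilde\x$ determines its quiver $Q(\tilde\x)$ \cite{GSV08}, and that the exchange relation (\ref{eq: exchange relations}) expresses each new variable through the coefficients $p_i^{+}, p_i^{-}$ read off from the quiver. The underlying mechanism in every step is that a $\Z$-algebra automorphism $f$ sends a Laurent monomial in the variables of a cluster $\x$ to the corresponding monomial in $f(\x)$, and sends coprime monomials to coprime monomials, so that $f$ transports exchange relations to exchange relations.

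For (1) $\Rightarrow$ $f$ is a cluster automorphism, I would exhibit $f$ itself as the required bijective rooted cluster morphism from $(Q,\x,\A)$ to $(Q',f(\x),\A)$. Conditions (CM1) and (CM2) follow because $f(\x)$ is by hypothesis the cluster of the seed $\S'$ and the ice-quiver isomorphism $Q'\cong Q$ (or $Q'\cong Q^{op}$) matches exchangeable variables with exchangeable ones and frozen with frozen. For (CM3) the key observation is that the right-hand side of (\ref{eq: exchange relations}) is the symmetric sum $p_i^{+}+p_i^{-}$, so reading the quiver as $Q$ or as $Q^{op}$ produces the same exchange relation; hence $f(x_i)f(x_i')=f(p_i^{+})+f(p_i^{-})$ coincides with the exchange relation for $f(x_i)$ in $\S'$, and cancelling $f(x_i)$ shows that $f$ commutes with the mutation at $x_i$. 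Iterating over an admissible sequence yields (CM3), and bijectivity is inherited from $f$.

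For $f$ is a cluster automorphism $\Rightarrow$ (2), I would start from the seeds furnished by the definition of cluster isomorphism and use (CM3) together with the connectivity of the set of clusters under mutation to conclude that $f$ carries every cluster to a cluster and commutes with every mutation. It then remains to identify the quiver of $f(\x)$ at each seed. Applying $f$ to the exchange relation at an exchangeable vertex $i$ shows that the two coprime monomials $f(p_i^{+}),f(p_i^{-})$ must coincide, as an unordered pair, with the coefficients $q_i^{+},q_i^{-}$ of the exchange relation at $f(x_i)$ in the quiver $Q(f(\x))$, so at each vertex the orientation is either preserved or reversed. The main obstacle is to upgrade this \emph{per-vertex} dichotomy to the \emph{global} alternative $Q(f(\x))\cong Q$ or $Q(f(\x))\cong Q^{op}$: an arrow between two exchangeable vertices $i,j$ contributes simultaneously to the relations at $i$ and at $j$, so a preserve/reverse disagreement at adjacent vertices would force conflicting arrows in the $2$-cycle free quiver $Q(f(\x))$; since the principal part of $Q$ is connected, the choice is forced to be uniform, and the frozen part is then pinned down by the same exponent-matching. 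Finally I would propagate this uniform choice along mutations, using that quiver isomorphism commutes with mutation and that $\mu_i(Q^{op})=(\mu_i Q)^{op}$, to obtain the stated isomorphism at every seed, which is (2).

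The remaining implication (2) $\Rightarrow$ (1) holds trivially. I expect the consistency argument in the previous paragraph, promoting the local preserve-or-reverse alternative to a single global one and checking that it respects the frozen/exchangeable partition, to be the only genuinely delicate point; the rest parallels the coefficient-free characterization of \cite{ASS12}, the novelty being the bookkeeping of the coefficient variables in $\fx$.
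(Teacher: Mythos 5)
Your proposal is correct and follows essentially the same route as the paper: the paper's proof is a one-line remark that, since $Q$ and its principal part are both connected, the arguments of Lemma 2.3 and Proposition 2.4 of \cite{ASS12} carry over, and your write-up is precisely that adaptation spelled out with the coefficient bookkeeping. In particular, your key step of promoting the per-vertex preserve/reverse dichotomy to a global one via connectivity of the principal part, and then pinning down the frozen arrows from the relations at exchangeable vertices, is exactly the point that the paper's connectivity hypothesis is invoked to handle.
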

\begin{proof}
Since $Q$ and its principal part are both connected, the proofs are similar to the proofs of Lemma 2.3 and Proposition 2.4 in \cite{ASS12}.
\end{proof}

\begin{lem}\label{index of direct auto}
We call a cluster automorphism of $\A_Q$ which maps $Q$ to a quiver isomorphic to $Q$ a direct cluster automorphism, then all the direct cluster automorphisms consist a normal subgroup $Aut^+(\A_Q)$ of $Aut(\A)$ with the index at most two.
\end{lem}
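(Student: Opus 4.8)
The plan is to construct a group homomorphism $\sigma \colon Aut(\A_Q) \to \Z/2\Z$ whose kernel is exactly $Aut^+(\A_Q)$. Once this is achieved the lemma follows formally: a kernel is always normal, and $Aut(\A_Q)/Aut^+(\A_Q)$ embeds into $\Z/2\Z$, so the index is at most two. To define $\sigma$, I would invoke Proposition \ref{Prop of CA}: for a cluster automorphism $f$ and any seed $\S=(\ex,\fx,P)$, the cluster $f(\x)$ lies in a seed $\S'=(\ex',\fx',P')$ with $P'\cong P$ or $P'\cong P^{op}$. Writing $P^{0}:=P$ and $P^{1}:=P^{op}$, I would set $\sigma_{\S}(f)=0$ in the first case and $\sigma_{\S}(f)=1$ in the second (choosing the value $0$ when both hold, i.e.\ when $P\cong P^{op}$).

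The first substantive step is to prove that $\sigma_{\S}(f)$ is independent of $\S$. Here I would use that a cluster automorphism commutes with mutations. Fixing an isomorphism $\phi$ realizing $P'\cong P^{\epsilon}$ for $\epsilon\in\{0,1\}$ and mutating at an exchangeable vertex $k$, the identity $f(\mu_{k}(\S))=\mu_{\phi(k)}(f(\S))$, combined with the compatibilities $\mu_{\phi(k)}(P')\cong\mu_{k}(P)$ and $(\mu_{k}P)^{op}=\mu_{k}(P^{op})$, shows that $f$ transforms the quiver of $\mu_{k}(\S)$ by the same $\epsilon$ as it transforms $P$. Since the exchange graph is connected, the value $\epsilon$ then propagates to every seed, so $\sigma(f):=\sigma_{\S}(f)$ is well defined and agrees with $\sigma_{\S_{0}}(f)$ at the initial seed; in particular $f$ is direct exactly when $\sigma(f)=0$.

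The one point requiring care is that a quiver may be isomorphic to its own opposite, making $\sigma_{\S}(f)$ a priori ambiguous at such a seed. I would dispose of this by noting that the propagation identity above is unconditional: if no seed has a non-self-opposite quiver then $Q\cong Q^{op}$, every automorphism is direct, $Aut^+(\A_Q)=Aut(\A_Q)$, and the statement is trivial; otherwise, starting from a seed whose quiver is not isomorphic to its opposite, the unambiguous value of $\epsilon$ there forces a consistent value at every other such seed, even along mutation paths passing through self-opposite quivers. This makes $\sigma$ genuinely well defined on all of $Aut(\A_Q)$.

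Finally I would check that $\sigma$ is a homomorphism. Choosing a seed $\S$ with quiver $P$ not self-opposite, the seed $g(\S)$ has quiver $\cong P^{\sigma(g)}$, which is again not self-opposite, and applying $f$ to it yields a quiver $\cong\bigl(P^{\sigma(g)}\bigr)^{\sigma(f)}=P^{\sigma(f)+\sigma(g)}$; hence $\sigma(fg)=\sigma(f)+\sigma(g)$ in $\Z/2\Z$. As $\ker\sigma$ consists precisely of the direct cluster automorphisms, the conclusion follows. The arguments parallel those of \cite{ASS12} for the coefficient-free case, the presence of frozen vertices being already absorbed into Proposition \ref{Prop of CA}. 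I expect the main obstacle to be the well-definedness of $\sigma$, specifically the bookkeeping around self-opposite quivers; the homomorphism property and the index bound are then routine.
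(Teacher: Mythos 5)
Your overall strategy --- a sign homomorphism $\sigma\colon Aut(\A_Q)\to\Z/2\Z$ whose kernel is $Aut^+(\A_Q)$ --- is exactly the idea behind the paper's proof (which defers to Lemma 2.9 of \cite{ASS12}, using Proposition \ref{Prop of CA}). But your execution has a genuine gap: you define $\sigma_{\Sigma}(f)$ via \emph{abstract} isomorphism, ``$P'\cong P$ or $P'\cong P^{op}$'', whereas Proposition \ref{Prop of CA} asserts these isomorphisms hold \emph{under the correspondence} $p_x\mapsto p_{f(x)}$. Dropping that qualifier is what creates your ``self-opposite'' ambiguity, and your patch for it fails. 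Concretely, take $\A_Q$ of type $A_2$, $Q\colon 1\rightarrow 2$. Every quiver mutation-equivalent to $Q$ is abstractly isomorphic to its opposite, so your first case applies and you would conclude that every cluster automorphism is direct and $Aut^+(\A_Q)=Aut(\A_Q)$. This is false: the automorphism exchanging $x_1$ and $x_2$ is a cluster automorphism whose induced correspondence \emph{reverses} the arrow (it is a reflection of the pentagon exchange graph), so it is an inverse and not a direct automorphism; here $Aut(\A_Q)\cong D_5$ while $Aut^+(\A_Q)\cong\Z/5\Z$ has index exactly $2$. (If instead you insist on the abstract reading of ``direct'' so as to save your first case, then your second case breaks: in type $A_3$ with initial quiver $1\rightarrow 2\rightarrow 3$, which is abstractly self-opposite, every automorphism is then ``direct'', yet your $\sigma$, computed at a reference seed whose quiver is $1\rightarrow 2\leftarrow 3$, is nonzero on the automorphism exchanging $x_1$ and $x_3$; hence $\ker\sigma\neq Aut^+(\A_Q)$ and your final identification collapses.)

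The repair is to work with the induced correspondence throughout, which is what \cite{ASS12} and the paper intend. The bijection $p_x\mapsto p_{f(x)}$ from the quiver of a seed to the quiver of its image is either an isomorphism or an isomorphism onto the opposite quiver, and it can never be both: if it both preserved and reversed some arrow, the image quiver would contain a $2$-cycle, and an arrow exists because the quivers are connected with at least two vertices. So the sign of $f$ is unambiguous at \emph{every} seed, and no dichotomy on self-opposite quivers is needed at all. Note also that your propagation identity $f(\mu_{k}(\Sigma))=\mu_{\phi(k)}(f(\Sigma))$ is only valid when $\phi$ is this induced correspondence (mutation in the direction of a variable $y$ goes to mutation in the direction of $f(y)$); for an arbitrary abstract isomorphism $\phi$ realizing $P'\cong P^{\epsilon}$ the identity is simply false, which is precisely where your argument leaks at self-opposite seeds. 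Once the sign is defined by the correspondence, your propagation, the homomorphism property, and the identification $\ker\sigma=Aut^+(\A_Q)$ all go through verbatim, and one recovers the argument of Lemma 2.9 in \cite{ASS12} that the paper invokes.
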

\begin{proof}
By using above proposition, the proof is similar to the proof of Lemma 2.9 in \cite{ASS12}.
\end{proof}

\subsection{Coefficient specializations}\label{section coefficient specializations}
The cluster algebra with universal coefficients is defined in \cite{FZ07}, we state its definition in our settings, that is, the universal
geometric cluster algebra.
\begin{defn}\cite{FZ07,R14a}\label{definition coefficient specilazation}
Let $Q$ be a quiver with $n$ exchangeable vertices and without frozen vertices. We denote by $RCA(Q)$ the set of rooted cluster algebras $(\tilde{Q},\tilde{\x}, \A_{\tilde{Q}})$, where $\A_{\tilde{Q}}$ are cluster algebras of
 the ice quivers with the principal part $Q$.
\begin{itemize}
\item[(a)] Let $(Q',\x',\A_{Q'})$ and $(Q'',\x'',\A_{Q''})$ be two rooted cluster algebras in $RCA(Q)$. Let $\mathbb{P}'$ and $\mathbb{P}''$
    be the coefficient groups of $\A_{Q'}$ and $\A_{Q''}$ respectively. If there is a group homomorphism $\varphi: \mathbb{P}'\to
    \mathbb{P}''$ that extends to a ring homomorphism $\varphi: \A_{Q'}[{\mathbb{P}'}^{-1}] \to \A_{Q''}[{\mathbb{P}''}^{-1}]$ such that
    $\varphi(\mu_\varpi(x'_i))=\mu_\varpi(x''_i)$ for any $1 \leqslant i \leqslant n$ and any admissible sequence $\varpi$, we say that both
    $\varphi: \mathbb{P}'\to \mathbb{P}''$ and $\varphi: \A_{Q'}[{\mathbb{P}'}^{-1}] \to \A_{Q''}[{\mathbb{P}''}^{-1}]$ are coefficient
    specializations.
\item[(b)] Let $(Q^{univ},\x^{univ},\A_{Q^{univ}})$ be a rooted cluster algebra in $RCA(Q)$ with coefficient group $\mathbb{P}$. We call
    $\A_{Q^{univ}}$ a universal geometric cluster algebra of $RCA(Q)$, if for any rooted cluster algebra $(Q',\x',\A_{Q'})$ in $RCA(Q)$
    with coefficient group $\mathbb{P'}$, there is a unique coefficient specialization from $\A_{Q^{univ}}[{\mathbb{P}}^{-1}]$ to
    $\A_{Q'}[{\mathbb{P}'}^{-1}]$.
\end{itemize}
\end{defn}

\begin{rem}
Since the cluster algebras we considered are geometric ones, the universal geometric cluster algebra is different from the initial cluster algebra with universal coefficients given in \cite{FZ02}. Some deformed universal geometric cluster algebras and their existences have been considered in a series of papers \cite{R12,R14a,R14b}.
\end{rem}
The following is an equivalent characterization of coefficient specializations.
\begin{lem}\cite{FZ07}\label{EC of specialization}
Let $(Q',\x',\A_{Q'})$ and $(Q'',\x'',\A_{Q''})$ be two rooted cluster algebras in $RCA(Q)$. Let $\mathbb{P}'$ and $\mathbb{P}''$ be the
coefficient groups of $\A_{Q'}$ and $\A_{Q''}$ respectively. A group homomorphism $\varphi: \mathbb{P}'\to \mathbb{P}''$ is a coefficient
specialization if and only if
$$\varphi(p'^{\pm}_{i,\varpi})=p''^{\pm}_{i,\varpi}$$
for any exchangeable vertex $i$ and admissible sequence $\varpi$, where $p'^{\pm}_{i,\varpi}$ and $p''^{\pm}_{i,\varpi}$ are coefficients in
exchange relations of $(\mu_{\varpi}(Q'),\mu_{\varpi}(\x'))$ and of $(\mu_{\varpi}(Q''),\mu_{\varpi}(\x''))$ respectively.
\end{lem}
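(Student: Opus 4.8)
The plan is to prove the two implications separately, in both cases exploiting the feature special to $RCA(Q)$: the ice quivers $Q'$ and $Q''$ have the same principal part $Q$. Consequently, along any admissible sequence $\varpi$ the exchange relation of $\A_{Q'}$ at an exchangeable vertex $i$ reads
\[
\mu_\varpi(x'_i)\cdot\mu_i\mu_\varpi(x'_i)={p'}^{+}_{i,\varpi}\,{m'}^{+}_{i,\varpi}+{p'}^{-}_{i,\varpi}\,{m'}^{-}_{i,\varpi},
\]
and similarly for $\A_{Q''}$ with primes replaced by double primes, where the monomials ${m'}^{\pm}_{i,\varpi}$ (resp. ${m''}^{\pm}_{i,\varpi}$) in the exchangeable cluster variables are governed only by the principal part of $\mu_\varpi(B)$, while the coefficients ${p'}^{\pm}_{i,\varpi}\in\mathbb{P}'$ (resp. ${p''}^{\pm}_{i,\varpi}\in\mathbb{P}''$) are governed by its frozen rows. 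Because the principal parts of $Q'$ and $Q''$ mutate identically, ${m'}^{\pm}_{i,\varpi}$ and ${m''}^{\pm}_{i,\varpi}$ carry the same exponent vectors in the exchangeable variables; this splitting of a mutation into an exchangeable part and a coefficient part is the structural fact driving both directions.

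For the sufficiency I would extend $\varphi$ to a ring homomorphism $\A_{Q'}[{\mathbb{P}'}^{-1}]\to\A_{Q''}[{\mathbb{P}''}^{-1}]$ by sending each exchangeable initial variable $x_i$ ($1\leqslant i\leqslant n$) to itself and using the given group homomorphism on $\mathbb{P}'$; this is well defined since the initial cluster together with $\mathbb{P}'$ freely generates the localized algebra. I would then prove $\varphi(\mu_\varpi(x'_i))=\mu_\varpi(x''_i)$ for all $i$ and all $\varpi$ by induction on the length of $\varpi$. The empty sequence is the base case, and in the inductive step I apply $\varphi$ to the displayed exchange relation for $Q'$: the induction hypothesis yields $\varphi({m'}^{\pm}_{i,\varpi})={m''}^{\pm}_{i,\varpi}$ and $\varphi(\mu_\varpi(x'_i))=\mu_\varpi(x''_i)$, while the standing hypothesis yields $\varphi({p'}^{\pm}_{i,\varpi})={p''}^{\pm}_{i,\varpi}$; dividing the resulting identity by $\mu_\varpi(x''_i)$ reproduces exactly the exchange relation defining $\mu_i\mu_\varpi(x''_i)$. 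This verifies Definition \ref{definition coefficient specilazation}(a).

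For the necessity I assume $\varphi$ is a coefficient specialization, so it carries each exchangeable cluster variable of $Q'$ to the corresponding one of $Q''$, giving $\varphi({m'}^{\pm}_{i,\varpi})={m''}^{\pm}_{i,\varpi}$. Applying $\varphi$ to the exchange relation for $Q'$, the left-hand side becomes $\mu_\varpi(x''_i)\cdot\mu_i\mu_\varpi(x''_i)$, namely the left-hand side of the exchange relation for $Q''$; equating the two right-hand sides leaves
\[
\bigl(\varphi({p'}^{+}_{i,\varpi})-{p''}^{+}_{i,\varpi}\bigr){m''}^{+}_{i,\varpi}=\bigl({p''}^{-}_{i,\varpi}-\varphi({p'}^{-}_{i,\varpi})\bigr){m''}^{-}_{i,\varpi}
\]
as an identity in $\F''$. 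To extract the two coefficient equalities termwise I would use that the exchangeable cluster variables of the seed $\mu_\varpi(\x'')$, taken together with the frozen variables $x_{n+1},\dots,x_{n+m}$, form an algebraically independent generating set of $\F''$: then ${m''}^{+}_{i,\varpi}$ and ${m''}^{-}_{i,\varpi}$ are distinct monomials in the exchangeable variables whereas the coefficients are monomials in the frozen variables, so comparing the two monomial types forces $\varphi({p'}^{\pm}_{i,\varpi})={p''}^{\pm}_{i,\varpi}$.

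The crux, and the step I expect to require the most care, is this termwise separation together with its one genuine exception: if the $i$-th column of the principal part of $\mu_\varpi(B)$ vanishes then ${m''}^{+}_{i,\varpi}={m''}^{-}_{i,\varpi}=1$, and the comparison only delivers $\varphi({p'}^{+}_{i,\varpi})+\varphi({p'}^{-}_{i,\varpi})={p''}^{+}_{i,\varpi}+{p''}^{-}_{i,\varpi}$. I would rule this out using the blanket assumption that $Q$ and its principal part are connected: once $n\geqslant 2$ no column of the principal part can be zero, so ${m''}^{+}_{i,\varpi}\neq{m''}^{-}_{i,\varpi}$ and the termwise comparison is legitimate. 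The algebraic-independence input is a standard consequence of the Laurent phenomenon and the birational invertibility of seed mutation, which I would invoke rather than reprove.
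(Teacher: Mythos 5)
You cannot be graded against the paper here, because the paper contains no proof of this statement: Lemma \ref{EC of specialization} is quoted from \cite{FZ07} with the argument omitted. Judged on its own terms, your proposal is essentially the standard Fomin--Zelevinsky argument and is correct for $n\geqslant 2$. The two pillars you rest on are the right ones: the principal part of an extended exchange matrix mutates autonomously, so $\mu_\varpi(B')$ and $\mu_\varpi(B'')$ share the principal part $\mu_\varpi(B)$ and the cluster-monomial factors ${m'}^{\pm}_{i,\varpi}$, ${m''}^{\pm}_{i,\varpi}$ carry identical exponent vectors; and any cluster of $\A_{Q''}$ together with the frozen variables freely generates $\F''$, so from
$\bigl(\varphi({p'}^{+}_{i,\varpi})-{p''}^{+}_{i,\varpi}\bigr){m''}^{+}_{i,\varpi}=\bigl({p''}^{-}_{i,\varpi}-\varphi({p'}^{-}_{i,\varpi})\bigr){m''}^{-}_{i,\varpi}$
with ${m''}^{+}_{i,\varpi}\neq {m''}^{-}_{i,\varpi}$ both sides must vanish. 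Two repairs are needed, though. First, your well-definedness claim in the sufficiency direction is misstated: $\A_{Q'}[{\mathbb{P}'}^{-1}]$ is \emph{not} freely generated by the initial cluster and $\mathbb{P}'$. The correct route is to define the ring homomorphism on the ambient Laurent polynomial ring $\Z[{x'_1}^{\pm1},\dots,{x'_{n+m'}}^{\pm1}]$, where any assignment of the variables to units of the target is legitimate, restrict it to $\A_{Q'}[{\mathbb{P}'}^{-1}]$ via the Laurent phenomenon, and then use your induction to see that the image lies in $\A_{Q''}[{\mathbb{P}''}^{-1}]$. Second, your zero-column exclusion is stated only for the initial matrix; it must hold at every mutated seed, so you need the (standard, but worth recording) fact that quiver mutation preserves connectedness, whence connectivity of $Q$ forces every column of every $\mu_\varpi(B)$ to be nonzero when $n\geqslant 2$.

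One further point deserves emphasis: your restriction to $n\geqslant 2$ is not a convenience but a necessity, because the ``only if'' direction of the lemma as stated (which allows $n=1$) is false. Take $Q'=Q''$ to be the ice quiver with one exchangeable vertex $1$, frozen vertices $2,3$, and arrows $2\to 1\to 3$. The exchange relation is $x_1\,\mu_1(x_1)=x_2+x_3$, and every exchangeable cluster variable is symmetric in $x_2,x_3$; hence the group homomorphism swapping $x_2$ and $x_3$ extends to a ring homomorphism fixing all exchangeable cluster variables, i.e.\ it is a coefficient specialization in the sense of Definition \ref{definition coefficient specilazation}, yet it sends $p^{+}_{1}=x_2$ to $x_3=p^{-}_{1}$ rather than to $p^{+}_{1}$. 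So your identification of the vanishing-column case as the crux is exactly right, and the honest conclusion of your argument is that the lemma holds under the paper's blanket connectivity assumption together with $n\geqslant 2$ --- which covers every use the paper makes of it, but is strictly less than what the unqualified statement asserts.
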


It follows from \cite{FZ07} that a cluster algebra with universal coefficients (if there exists one) is unique. In our settings, we have the following more specific characterization for the uniqueness of the universal geometric cluster algebras.

\begin{lem}\label{uniqueness of universal algebra}
Let $(Q',\x',\A_{Q'})$ and $(Q'',\x'',\A_{Q''})$ be two rooted cluster algebras in $RCA(Q)$ with $\A_{Q'}$ and $\A_{Q''}$ the universal
geometric cluster algebras. We denote by $B=(b_{ji})_{n\times n}$ the exchange matrix corresponding to $Q$, and by ${B \choose B'}=(b'_{ji})_{(n+m')\times n}$ (or ${B \choose B''}=(b''_{ji})_{(n+m'')\times n}$ respectively) the
exchange matrix corresponding to $Q'$ (or $Q''$ respectively).
Then $m'=m''$ and there is an invertible matrix $A=(a_{jk})_{m'\times m''}$ with entries in $\mathbb{Z}$ such that

$$A^{t}\mu_{\varpi}(B')=\mu_{\varpi}(B'')$$

for any admissible sequence $\varpi$, where $\mu_{\varpi}(B')$ and $\mu_{\varpi}(B'')$ are frozen parts of $\mu_{\varpi}{B \choose B'}$ and
$\mu_{\varpi}{B \choose B''}$ respectively.
\end{lem}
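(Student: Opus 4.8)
The plan is to establish Lemma \ref{uniqueness of universal algebra} by exploiting the universal property from Definition \ref{definition coefficient specilazation}(b) in both directions, combined with the matrix characterization of coefficient specializations in Lemma \ref{EC of specialization}. Since both $\A_{Q'}$ and $\A_{Q''}$ are universal geometric cluster algebras of the same $RCA(Q)$, each admits a unique coefficient specialization \emph{to} the other. Let $\varphi:\mathbb{P}'\to\mathbb{P}''$ and $\psi:\mathbb{P}''\to\mathbb{P}'$ denote these two coefficient specializations. The strategy is to show that $\varphi$ and $\psi$ are mutually inverse group isomorphisms, then translate this isomorphism of free abelian coefficient groups into the asserted invertible integer matrix $A$.

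\emph{First I would} show that $\psi\circ\varphi:\mathbb{P}'\to\mathbb{P}'$ is the identity. Composing two coefficient specializations yields a coefficient specialization (this follows directly from Definition \ref{definition coefficient specilazation}(a), since the defining condition $\varphi(\mu_\varpi(x'_i))=\mu_\varpi(x''_i)$ is preserved under composition of ring homomorphisms). Now $\psi\circ\varphi$ is a coefficient specialization from $\A_{Q'}[{\mathbb{P}'}^{-1}]$ to itself, and so is the identity map $\id$. By the uniqueness clause in the universal property applied to $\A_{Q'}$, there is exactly one coefficient specialization from $\A_{Q'}[{\mathbb{P}'}^{-1}]$ to $\A_{Q'}[{\mathbb{P}'}^{-1}]$; hence $\psi\circ\varphi=\id$. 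Symmetrically $\varphi\circ\psi=\id$ on $\mathbb{P}''$. Therefore $\varphi$ is a group isomorphism $\mathbb{P}'\cong\mathbb{P}''$ of free abelian groups, which forces the ranks $m'=m''$ to coincide.

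\emph{Next I would} encode $\varphi$ as a matrix. Since $\mathbb{P}'$ is free abelian generated by $\fx'=\{x'_{n+1},\dots,x'_{n+m'}\}$ and $\mathbb{P}''$ is free abelian generated by $\fx''=\{x''_{n+1},\dots,x''_{n+m''}\}$, the isomorphism $\varphi$ is recorded (multiplicatively) by an integer matrix $A=(a_{jk})_{m'\times m''}$ via $\varphi(x'_{n+j})=\prod_k (x''_{n+k})^{a_{jk}}$; since $\varphi$ is an isomorphism of free abelian groups of equal rank, $A$ is invertible over $\Z$. The content now is to interpret the defining condition of the coefficient specialization $\varphi$ through Lemma \ref{EC of specialization}. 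That lemma gives $\varphi(p'^{\pm}_{i,\varpi})=p''^{\pm}_{i,\varpi}$ for every exchangeable vertex $i$ and every admissible sequence $\varpi$. Each coefficient $p'^{\pm}_{i,\varpi}$ is a monomial in the frozen variables whose exponents are precisely the entries of the relevant column of the frozen part $\mu_\varpi(B')$; applying $\varphi$ and reading off exponents on the generators of $\mathbb{P}''$ translates this monomial identity exactly into the matrix equation $A^{t}\mu_\varpi(B')=\mu_\varpi(B'')$.

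\emph{The main obstacle I expect} is the bookkeeping in the last step: one must verify that the exponent vectors of $p'^{+}_{i,\varpi}$ and $p'^{-}_{i,\varpi}$ together reconstruct the full $i$-th column of the frozen matrix $\mu_\varpi(B')$ (positive entries from $p^{+}$, negative from $p^{-}$), and that applying $\varphi$ genuinely corresponds to left-multiplication by $A^{t}$ rather than by $A$ or $A^{-1}$ — the transpose arises because $\varphi$ acts on generators while the matrix $B'$ stores exponents columnwise. I would handle this by fixing sign conventions once and checking a single column carefully; because the frozen part of the exchange matrix mutates compatibly with the coefficient monomials, the relation then holds simultaneously for all admissible $\varpi$. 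The invertibility of $A$ over $\Z$ (as opposed to merely over $\Q$) is guaranteed by the two-sided inverse $\psi$ established at the outset, so no separate argument is needed there.
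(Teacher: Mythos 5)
Your proposal is correct and follows essentially the same route as the paper's proof: universality in both directions gives mutually inverse coefficient specializations $\varphi$ and $\psi$ (composition equals the identity by uniqueness), forcing $\mathbb{P}'\cong\mathbb{P}''$, hence $m'=m''$ and an invertible integer matrix $A$, after which the monomial identities on coefficients are read off exponent-wise to yield $A^{t}\mu_{\varpi}(B')=\mu_{\varpi}(B'')$. The only minor difference is that you invoke Lemma \ref{EC of specialization} uniformly for all admissible sequences $\varpi$, whereas the paper derives the monomial identities at the initial seed directly from the exchange relations (using $\varphi(x'_j)=x''_j$ on exchangeable variables) and then extends to general $\varpi$ by induction.
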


\begin{proof}
Let $\mathbb{P}'$ and $\mathbb{P}''$ be the coefficient groups of $\A_{Q'}$ and $\A_{Q''}$ respectively. Since $\A_{Q'}$ and $\A_{Q''}$ are both universal geometric cluster algebras with the same principal part, there are two coefficient specializations $\varphi: \mathbb{P}' \to \mathbb{P}''$ and $\psi: \mathbb{P}'' \to \mathbb{P}'$. Then the composition $\psi\varphi$ is a coefficient specialization from $\mathbb{P}'$ to itself and thus $\psi\varphi$ is the identity. Since $\mathbb{P}'$ and $\mathbb{P}''$ are both free abelian groups, they have the same rank, thus $m'=m''$. We assume that
$$\varphi({x'_{n+j}})={\displaystyle\prod_{1 \leqslant k \leqslant m''}}{x''_{n+k}}^{a_{jk}}$$
for any $1 \leqslant j \leqslant m'$, where each $a_{jk}$ is an integer number. Then we have a matrix $A=(a_{jk})$ which is invertible since
$\varphi$ is an isomorphism.
For any $1 \leqslant i \leqslant n$, we consider the exchange relations
$$\mu_{i}({x'_i}) x'_i = {\prod_{b'_{ji} > 0}} {x'_{j}}^{b'_{ji}} + \prod_{b'_{ji}< 0} {x'_{j}}^{-b'_{ji}}$$
and
$$\mu_{i}({x''_i}) x''_i = \prod_{b''_{ki} > 0} {x''_{k}}^{b''_{ki}} + \prod_{b''_{ki}< 0} {x''_{k}}^{-b''_{ki}}.$$
Since $\varphi({x'_{j}})=x''_{j}$ for any $1 \leqslant j \leqslant n$, the equality $\varphi(\mu_{i}({x'_{i}})=\mu_{i}(x''_{i})$ is equivalent
to
$${\prod_{\substack{1 \leqslant j \leqslant m' \\ b'_{ji}>0}}(\prod_{1 \leqslant k \leqslant
m''}{x''_{n+k}}^{a_{jk}})^{b'_{(n+j)i}}=\prod_{\substack{1 \leqslant j \leqslant m'' \\ b''_{ji}>0}}{x''_{n+k}}^{b''_{(n+k)i}}}~~~~~~~~~~and$$

$${\prod_{\substack{1 \leqslant j \leqslant m' \\ b'_{ji}<0}}(\prod_{1 \leqslant k \leqslant
m''}{x''_{n+k}}^{a_{jk}})^{-b'_{(n+j)i}}=\prod_{\substack{1 \leqslant j \leqslant m'' \\ b''_{ji}<0}}{x''_{n+k}}^{-b''_{(n+k)i}}}.$$

Thus we have
$${\prod_{1 \leqslant j \leqslant m'}(\prod_{1 \leqslant k \leqslant m''}{x''_{n+k}}^{a_{jk}})^{b'_{(n+j)i}}=\prod_{1 \leqslant k \leqslant
m''}{x''_{n+k}}^{b''_{(n+k)i}}},$$
and finally
$${{\sum_{1 \leqslant k \leqslant m''}}{a_{jk}b'_{(n+j)i}=b''_{(n+k)i}}}.$$
Therefore we have $A^{t}B'=B''.$ By induction, we can prove the general case $A^{t}\mu_{\varpi}(B')=\mu_{\varpi}(B'')$ for any admissible
sequence $\varpi$.
\end{proof}
The existence of universal geometric cluster algebras is not clearly at all. A cluster algebra of finite type with universal coefficients is given in $\cite{FZ02}$. It is a geometric cluster algebra, and its frozen rows are given by $g$-vectors of the corresponding cluster algebra with principal coefficients, we call it $FZ$-universal cluster algebra.
\begin{exm}\label{two univ algs}
We consider the $FZ$-universal cluster algebra $\A_{Q'}$ of type $A_2$, whose quiver $Q'$ is as follows with matrix ${B \choose B'}$, where $1$ and $2$ are exchangeable vertices and the others are frozen ones.

\begin{center}
\begin{tikzpicture}
\node[] (C) at (-5,0) 						{\begin{tikzpicture}[scale=0.5]
                        \node[] () at (-3,0) {$Q'~:$};
                        \node[] () at (-1.5,0) {$1$};
                        \node[] () at (1.5,0) {$2$};
                        \node[] () at (-1.5,3) {$\textcolor[rgb]{0.70,0.00,0.00}{3}$};
                        \node[] () at (1.5,3) {$\textcolor[rgb]{0.70,0.00,0.00}{4}$};
                        \node[] () at (-1.5,-3) {$\textcolor[rgb]{0.70,0.00,0.00}{5}$};
                        \node[] () at (1.5,-3) {$\textcolor[rgb]{0.70,0.00,0.00}{6}$};
                        \node[] () at (0,-3) {$\textcolor[rgb]{0.70,0.00,0.00}{7}$};
						\draw[->,thick] (-1,0) -- (1,0);
						\draw[->,thick] (-1.5,2.5) -- (-1.5,0.5);
						\draw[->,thick] (1.5,2.5) -- (1.5,0.5);
						\draw[<-,thick] (-1.5,-2.5) -- (-1.5,-0.5);
						\draw[<-,thick] (1.5,-2.5) -- (1.5,-0.5);
						\draw[<-,thick] (0.15,-2.45) -- (1.2,-0.5);
						\draw[->,thick] (-0.15,-2.45) -- (-1.2,-0.5);
						\end{tikzpicture}};
\node[] (C) at (-2,0)
                       {\begin{tikzpicture}
                        $\left(
                           \begin{array}{c}
                             B \\
                             B' \\
                           \end{array}
                         \right)
$
$~=~$
                        $\left(
                          \begin{array}{cc}
                            0 & 1 \\
                            -1 & 0 \\
                            1 & 0 \\
                            0 & 1 \\
                            -1 & 0 \\
                            0 & -1 \\
                            1 & -1 \\
                          \end{array}
                        \right)$;
                        \end{tikzpicture}};
\node[] (C) at (2,0)  {$~~$};

\end{tikzpicture}
\end{center}

Given the following invertible matrix $A$, denote by $Q''$ the ice quiver of ${B \choose B''}$, where $B''=A^{t}B'$.
\begin{center}
\begin{tikzpicture}
\node[] (C) at (-6,0)
                       {\begin{tikzpicture}
                        \node[] () at (-2,0) {$A~:$};
                        \node[] () at (0,0)
                               {$\left(
                                  \begin{array}{ccccc}
                                    2 & 1 & 0 & 0 & 0 \\
                                    1 & 1 & 0 & 0 & 0 \\
                                    0 & 0 & 3 & 2 & 0 \\
                                    0 & 0 & 1 & 1 & 0 \\
                                    0 & 0 & 0 & 0 & 1 \\
                                  \end{array}
                                \right)$};
                        \end{tikzpicture}};
\node[] (C) at (-2,0) 						{\begin{tikzpicture}[scale=0.5]
                        \node[] () at (-3,0) {$Q''~:$};
                        \node[] () at (-1.5,0) {$1$};
                        \node[] () at (1.5,0) {$2$};
                        \node[] () at (-1.5,3) {$\textcolor[rgb]{0.70,0.00,0.00}{3}$};
                        \node[] () at (1.5,3) {$\textcolor[rgb]{0.70,0.00,0.00}{4}$};
                        \node[] () at (-1.5,-3) {$\textcolor[rgb]{0.70,0.00,0.00}{5}$};
                        \node[] () at (1.5,-3) {$\textcolor[rgb]{0.70,0.00,0.00}{6}$};
                        \node[] () at (0,-3) {$\textcolor[rgb]{0.70,0.00,0.00}{7}$};
						\draw[->,thick] (-1,0) -- (1,0);
						\draw[->,thick] (-1.6,2.5) -- (-1.6,0.5);
						\draw[->,thick] (-1.4,2.5) -- (-1.4,0.5);
						\draw[->,thick] (-1,2.5) -- (1,0.5);
						\draw[->,thick] (1,2.5) -- (-1,0.5);
						\draw[->,thick] (1.5,2.5) -- (1.5,0.5);
						\draw[<-,thick] (-1.5,-2.5) -- (-1.5,-0.5);
						\draw[<-,thick] (-1.7,-2.5) -- (-1.7,-0.5);
						\draw[<-,thick] (-1.3,-2.5) -- (-1.3,-0.5);
						\draw[->,thick] (1,-0.5) -- (-1,-2.5);
						\draw[<-,thick] (-1,-2.5) -- (1,-0.5);
						\draw[<-,thick] (1.1,-2.5) -- (-1,-0.3);
						\draw[<-,thick] (1,-2.6) -- (-1.1,-0.4);
						\draw[<-,thick] (1.5,-2.5) -- (1.5,-0.5);
						\draw[<-,thick] (0.15,-2.45) -- (1.2,-0.5);
						\draw[->,thick] (-0.15,-2.45) -- (-1.2,-0.5);
						\end{tikzpicture}};
\node[] (C) at (0.5,0)
                       {\begin{tikzpicture}
                        $\left(
                           \begin{array}{c}
                             B \\
                             B'' \\
                           \end{array}
                         \right)
$
$~=~$
                        $\left(
                          \begin{array}{cc}
                            0 & 1 \\
                            -1 & 0 \\
                            2 & 1 \\
                            1 & 1 \\
                            -3 & -1 \\
                            -2 & -1 \\
                            1 & -1 \\
                          \end{array}
                        \right)$;
                        \end{tikzpicture}};
\node[] (C) at (4,0)  {$~~$};
\end{tikzpicture}
\end{center}
Then as the construction in above lemma, the matrix $A$ induces a group homomorphism $\varphi$ from the coefficient group $\mathbb{P}'$ of $\A_{Q'}$ to the coefficient group $\mathbb{P}''$ of $\A_{Q''}$, and it is straightforward to check that $\varphi$ is a coefficient specialization. Thus $\A_{Q''}$ is another universal cluster algebra of type $A_2$, which is not a $FZ$-universal cluster algebra. We will show in Example \ref{universal subgroup} that the cluster automorphism group $Aut(\A_{Q'})$ is $D_5$ which is the biggest one in the cluster automorphism groups of gluing free cluster algebras of type $A_2$, while $Aut(\A_{Q''})$ is $\{id\}$ which is the smallest one.
\end{exm}

\subsection{Cluster algebras from surfaces}
Following \cite{FST08}, we recall oriented marked Riemann surfaces and the cluster algebras arising from surfaces.
An {\it oriented marked Riemann surface} (or surface for brevity) is a pair $(S,M)$, where $S$ is a connected oriented Riemann surface with boundary and $M$ is a finite set of {\it marked points} on $S$. The marked points in the interior of $S$ are called {\it punctures}. We assume that $S$ is not closed and there is at least one marked point on each connected component of the boundary. We always assume that, for some technical reasons, $(S,M)$ is none of a disc with one, two or three marked points on the boundary; or a once-punctured disc with one marked point on the boundary.\\

Two curves in $(S,M)$ are the same if they are isotopic with respect to marked points. Two curves $\gamma$ and $\gamma'$ are {\it compatible}
if there are representatives of $\gamma$ and $\gamma'$ which do not intersect in $S\setminus M$.
An {\it arc} is a non-contracted curve, with endpoints in $M$ which are the only possible self-intersection points. An arc is a {\it boundary arc} if it is isotopic to a boundary component with respect to marked points, otherwise it is an internal arc. An {\it ideal triangulation} of $(S,M)$ is a maximal collection of compatible internal arcs.
An {\it ideal triangulation of $(S,M)$ with boundary}\cite{ST09,ADS13} is a maximal collection of compatible arcs (which contains the boundary arcs). The arcs
of the triangulation cut the surface into triangles (which may be self-folded). Given an ideal triangulation $T$ without self-folded triangles (see Section 4 in \cite{FST08} for the general case), we associate to it a quiver $Q_T$ as follows.
\begin{itemize}
\item[(a)] the vertices in $Q_T$ are the arcs in $T$;
\item[(b)] if two arcs $\gamma$ and $\gamma'$ are both sides of a triangle and $\gamma'$ follows $\gamma$ in the positive direction, then add
    an arrow from $\gamma$ to $\gamma'$;
\item[(c)] deleting all the $2$-cycles.
\end{itemize}
For the corresponding ideal triangulation $T^{b}$ with boundary, we associate to it an ice quiver $Q_{T^{b}}$ in a similar way (we delete the arrows between boundary arcs), where the frozen vertices are the boundary arcs. Then $Q_T$ is the principal part of $Q_{T^{b}}$. We associate cluster algebras $\A_{Q_{T}}$ and
$\A_{Q_{T^{b}}}$ to $Q_T$ and $Q_{T^{b}}$ respectively. It is proved in \cite{FST08} that the cluster algebras $\A_{Q_{T}}$ and
$\A_{Q_{T^{b}}}$ are independent on the choice of the triangulations. We call them the cluster algebras of $(S,M)$, without boundary and with boundary respectively, and denote them by $\A(S,M)$ and $\A^{b}(S,M)$ respectively. We obtain a new triangulation by flipping an ideal triangulation
without self-folded triangles, where the {\it flip} of the triangulation corresponds to the mutation of the quiver. To flip an ideal
triangulation at the internal arc of a self-folded triangle, Fomin, Shapiro and Thurston \cite{FST08} introduced the tagged triangulation by adding tags $0$
or $1$ on the punctures. These triangulations one-to-one correspond to the clusters of $\A_{Q_{T}}$ and
induce one-to-one correspondences from the tagged arcs to the cluster variables. It is showed in \cite{FST08} that any ideal triangulation,
except for a triangulation of a 4-punctured sphere (which is closed and thus eliminated in our assumptions of the surfaces), can be obtained by gluing the
{\it puzzle pieces} in Figure \ref{puzzle-pieces} along the matched sides (see Section 4 of \cite{FST08} for details). The following lemma is useful
in the paper.
\begin{figure}[htbp]
\begin{center}
\begin{tabular}{ccc}
\setlength{\unitlength}{2pt}
\begin{picture}(20,19)(10,-3)
{\begin{tikzpicture}
						\draw[-,thick] (0,0) -- (1.7,0);
						\draw[-,thick] (0,0) -- (0.9,1.2);
						\draw[-,thick] (0.9,1.2) -- (1.7,0);
                        \node[] () at (0,0) {$\bullet$};
                        \node[] () at (1.7,0) {$\bullet$};
                        \node[] () at (0.9,1.2) {$\bullet$};
						\end{tikzpicture}}
\end{picture}
&
\setlength{\unitlength}{8pt}
\begin{picture}(10,10)(0,0)
\thicklines
\qbezier(5,0)(0,5)(5,10)
\qbezier(5,0)(10,5)(5,10)
\qbezier(5,0)(2,7)(5,7)
\qbezier(5,0)(8,7)(5,7)
\put(5,0){\line(0,1){5}}
\put(4.67,4.8){{$\bullet$}}
\put(4.67,9.5){{$\bullet$}}
\put(4.67,-0.2){{$\bullet$}}
\end{picture}
&
\setlength{\unitlength}{1pt}
\begin{picture}(80,80)(-25,0)
\thicklines
\qbezier(20,0)(-30,45)(0,45)
\qbezier(20,0)(70,45)(40,45)
\qbezier(20,0)(20,45)(0,45)
\qbezier(20,0)(20,45)(40,45)
\qbezier(20,0)(4,28)(4,28)
\qbezier(20,0)(36,28)(36,28)
\qbezier(20,0)(-67,60)(20,60)
\qbezier(20,0)(107,60)(20,60)

\put(36,28){\makebox(0,0){$\bullet$}}
\put(4,28){\makebox(0,0){$\bullet$}}

\end{picture}
\\[.1in]
\end{tabular}
\end{center}
\caption{The three puzzle pieces}
\label{puzzle-pieces}
\end{figure}
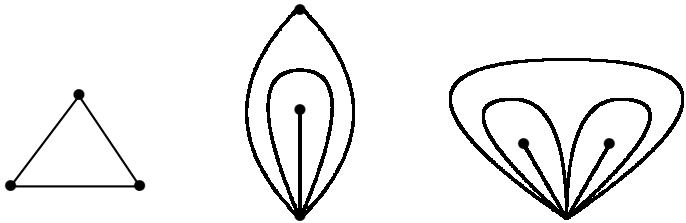

\begin{lem}\cite{FST08}(Lemma 2.13)\label{lem in Surface}
The surface $(S,M)$ has an ideal triangulation which contains no self-folded triangles.
\end{lem}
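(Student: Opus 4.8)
The plan is to reduce the statement to a condition on punctures and then to produce the desired triangulation by a flip argument. First I would recall that a self-folded triangle consists of a loop $\ell$ based at a marked point $q$ together with a radius $r$ joining $q$ to a puncture $p$ enclosed by $\ell$, the arc $r$ being counted as two of the three sides. The crucial observation is that in an ideal triangulation $T$ the puncture $p$ is the apex of such a self-folded triangle if and only if $p$ is incident to exactly one arc of $T$, namely $r$: indeed $p$ lies inside the loop $\ell$, which does not meet $p$, so no other arc can reach $p$. Hence the self-folded triangles of $T$ correspond bijectively to the punctures of valence one. In particular, if $(S,M)$ has no punctures then no ideal triangulation can contain a self-folded triangle, and any ideal triangulation (which exists, being a maximal collection of compatible internal arcs) already proves the claim.

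Assuming there is at least one puncture, it therefore suffices to exhibit an ideal triangulation in which every puncture has valence at least two. I would argue by extremality: among all ideal triangulations of $(S,M)$ choose one, say $T$, minimizing the number of valence-one punctures, equivalently the number of self-folded triangles. Suppose for contradiction that $T$ still contains a self-folded triangle, with loop $\ell$, radius $r$, enclosed puncture $p$, and basepoint $q$. Since $\ell$ is not the radius of any self-folded triangle, it is flippable, and I would flip it to obtain a new ideal triangulation $T'$.

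The key local picture is that the loop $\ell$, together with the triangle $\Delta$ lying on its outer side, fills a once-punctured digon whose two vertices are $q$ and the apex $u$ of $\Delta$, and which contains $p$. Flipping $\ell$ replaces the self-folded triangulation of this digon (loop plus radius) by the triangulation using the two radii $r$ and a new arc $\ell'$ from $u$ to $p$; consequently $p$ acquires valence two in $T'$ and the self-folded triangle at $p$ disappears. It then remains to check that no new self-folded triangle is created, that is, that $T'$ has strictly fewer valence-one punctures than $T$. The only arcs affected are $\ell$ (removed) and $\ell'$ (added), so the valence of every puncture other than $q$ cannot decrease, while $q$ loses the loop $\ell$. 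The main obstacle is precisely to control the valence of $q$: I expect to show that after the flip $q$ still carries the radius $r$ and both sides of $\Delta$, so that when $q$ is itself a puncture its valence remains at least three, unless the configuration degenerates (for instance when the two sides of $\Delta$ coincide, so that $\Delta$ is again self-folded, or when $q=u$). Such degeneracies force $(S,M)$ to be closed or to be one of the small surfaces excluded by our standing assumptions on $(S,M)$, and ruling them out using those assumptions is the delicate point. Granting this, $T'$ has strictly fewer self-folded triangles than $T$, contradicting the minimality of $T$; hence $T$ contains none, which is the assertion.
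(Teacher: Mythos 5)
You could not have known this, but the paper contains no proof of this lemma at all: it is imported verbatim from \cite{FST08} (Lemma 2.13), so the only meaningful comparison is with the argument in that source, which is in essence a direct construction (triangulate the surface obtained by forgetting the punctures, then re-insert the punctures one at a time, coning each one off to the corners of the triangle containing it, so that no self-folded triangle ever appears). Your flip-theoretic extremality argument is a genuinely different route, and it is correct; in fact the step you flag as ``the delicate point'' is easier than you fear, and your list of degeneracies is not quite the right one. After flipping the loop $\ell$, the only vertex whose valence drops is the basepoint $q$, and $q$ still carries an end of the radius $r$ and of each of the two sides $a,b$ of the outer triangle $\Delta$ (these are distinct from $r$, which lies inside $\ell$ while they lie outside); so provided $a\neq b$, the valence of $q$ in $T'$ is at least three, no new valence-one puncture (equivalently, no new self-folded triangle) can appear, and the count drops by exactly one at $p$, contradicting minimality. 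The case $q=u$ needs no exclusion at all: then $a$ and $b$ are loops at $q$ and its valence is even larger. The only genuine degeneracy is $a=b$, in which case $\Delta$ is itself self-folded with loop $\ell$, so $\ell$ bounds a disc on each of its two sides and $(S,M)$ is a sphere with the three marked points $p,q,u$ --- a closed surface, excluded by the standing assumptions here and in \cite{FST08}. As for what each approach buys: the constructive proof is shorter and requires no flip machinery, while yours, read as an algorithm rather than as an extremality argument, proves the stronger statement that \emph{any} ideal triangulation can be connected by flips to one without self-folded triangles, each flip strictly decreasing their number. To make your write-up fully rigorous you should also record two small facts you use implicitly: the converse half of the valence correspondence (a valence-one puncture's unique incident arc must be a doubled side of its triangle, which is therefore self-folded), and the fact that $\ell$ is flippable because it is a loop, whereas the folded side of a self-folded triangle never is.
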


For a surface $(S,M)$, the marked mapping class group describes the symmetries of the surface $S$ with respect to the marked points and the tags
of the punctures. We recall its definition from \cite{ASS12}. We denote by $\homeo$ the group of orientation-preserving homeomorphisms from $S$
to $S$ that fix $M$ setwise, and $\homeoo$ the subgroup of $\homeo$ which consisting of homeomorphisms that are isotopic to the identity
relative to $M$. Then the {\it mapping class group} $\mg$ is defined as the quotient $$\mg=\homeo/\homeoo.$$ Note that the group $\mg$ acts on
the set of ideal triangulations in the natrual way.
Let $\mathcal{Z}$ be the power set of the set of punctures of $(S,M)$. A homeomorphism in $\homeo$ acts on $\mathcal{Z}$ pointwise. The
set $\mathcal{Z}$ is a group with respect to the operation $\ominus: \mathcal {P}_1\ominus \mathcal {P}_2= (\mathcal {P}_1\cup \mathcal
{P}_2)\setminus (\mathcal {P}_1\cap \mathcal {P}_2)$. We define the {\it marked mapping class group} $\mmg$ of the surface $(S,M)$ to be the
semidirect product
$$\mmg=\mathcal{Z}\rtimes\mg,$$ where the group operation is given by
$(\bar f_1,\mathcal{P}_1)(\bar f_2, \mathcal {P}_2) =(\bar f_1\bar f_2, \mathcal {P}_1\ominus f_1(\mathcal {P}_2))$ for any $\mathcal{P}_1,
\mathcal{P}_2$ in $\mathcal{Z}$ and any $f_1, f_2$ in $\homeo$. This group acts on the set of tagged triangulations of $(S,M)$, with the
$\mathcal{Z}$ part acting by simultaneously changing the tags of the arcs attached to the punctures.
On the one hand, since the orientation-preserving homeomorphism of $(S,M)$ and the changing of tags maintain the combinatorics of the tagged triangulations, an
element in $\mmg$ induces a direct cluster automorphism of the associating cluster algebra $\A(S,M)$ (see section 4.4 in \cite{ASS12} for more
details). On the other hand, a direct cluster automorphism in $Aut^+(\A(S,M))$ induces an isomorphism between the quivers of two tagged
triangulations of $(S,M)$, and it is proved in \cite{BS13} (Subsection 8.7) that except some special cases, the two tagged
triangulations differ by an element in the marked mapping class group $\mmg$. Thus the marked mapping
class group $\mmg$ and the direct automorphism group $Aut^+(\A(S,M))$ are isomorphic. More precisely, we have the following result:\\

Let $(S,M)$ be a surface satisfies the following\\

{\bf{Assumption 1:}} Suppose that $(S,M)$ is not one of the following surfaces
\begin{itemize}
\item[(a)] a once-punctured disc with two or four marked points on the boundary;
\item[(b)] a twice-punctured disc with two marked points on the boundary;
\item[(c)] a closed surface (which we have eliminated in the initial assumption of the surfaces).
\end{itemize}
Then the marked mapping class group $\mmg$ is isomorphic to the direct automorphism group $Aut^+(\A(S,M))$ (see also in \cite{BQ12}).

\section{Cluster automorphism groups and coefficients}\label{section main}

We study in this section the influence of coefficients on cluster automorphism groups.

\subsection{Gluing free cluster algebras}\label{section gluing free cluster algebras}

In this subsection, we fix $(Q,\x,\A_Q)$ a rooted cluster algebra with exchange matrix $B=(b_{ji})_{(m+n)\times n}$.
\begin{defn}\label{definition gluing free cluster algebras}
We say that
\begin{itemize}
\item[(a)] two frozen vertices $j$ and $k$ are glueable in $Q$, if there is a positive rational number $n_{jk}$ such that
    $b_{ji}=n_{jk}b_{ki}$ for any exchangeable vertex $i$ in $Q$;
\item[(b)] two frozen vertices $j$ and $k$ are strictly glueable in $Q$, if they are glueable with $n_{jk}=1$;
\item[(c)] a frozen vertex $j$ is prime, if $g.c.d~(b_{j1}, \cdots, b_{jn})=1$;
\item[(d)] $Q$ is strictly gluing free, if any two frozen vertices in $Q$ are not glueable;
\item[(e)] $Q$ is gluing free, if any two frozen vertices in $Q$ are not strictly glueable;
\item[(f)] $Q$ is prime, if any frozen vertex is prime;
\item[(g)] $Q$ is prime gluing free, if $Q$ is prime and strictly gluing free;
\item[(h)] the rooted cluster algebra $(Q,\x,\A_Q)$ is gluing free, if $Q$ is gluing free;
\item[(i)] the rooted cluster algebra $(Q,\x,\A_Q)$ is prime gluing free, if $Q$ is prime gluing free.
\end{itemize}
\end{defn}
\begin{rem}
Note that $Q$ is prime gluing free if and only if it is prime and gluing free.
\end{rem}
We separate the frozen vertices of $Q$ into sets of glueable vertices, then we have a disjoint collection of sets $\{j_{1,1}, \cdots ,
j_{1,t_1}\}, \cdots , \{j_{s,1}, \cdots , j_{s,t_s}\}$ of frozen vertices of $Q$, where any two vertices (if there exist) in each set are
glueable. Note that there may exist set with a single vertex. In fact $Q$ is strictly gluing free if and only if all of these sets have a
single vertex. The vertex separation determines a separation of coefficients in $\A_Q$. We denote the corresponding sets by
$\{x_{j_{1,1}}, \cdots , x_{j_{1,t_1}}\},  \cdots , \{x_{j_{s,1}}, \cdots , x_{j_{s,t_s}}\}$ respectively. Then we have the following
\begin{thm}\label{main thm 1 of pgq}  For a rooted cluster algebra $(Q,\x,\A_Q)$, there is a unique prime gluing free rooted cluster algebra $(Q_{pgf},\x_{pgf},\A_{Q_{pgf}})$ and a unique injective coefficient specialization
$\varphi: \mathbb{P}_{pgf} \to \mathbb{P}$ such that for any $1 \leqslant k \leqslant s$, there is a coefficient $x_{j_k}$ in
$\mathbb{P}_{pgf}$ which is mapped to $x_{j_{k,1}}^{n_{j_{k,1}}} x^{n_{j_{k,2}}}_{j_{k,2}}\cdots x^{n_{j_{k,t_k}}}_{j_{k,t_k}}$ in $\mathbb{P}$, where
$n_{j_{k,1}}, n_{j_{k,2}}, \cdots , n_{j_{k,t_k}}$ are positive integer numbers, and $\mathbb{P}_{pgf}$ and $\mathbb{P}$ are coefficient groups
of $\A_{Q_{pgf}}$ and $\A_{Q}$ respectively.
\end{thm}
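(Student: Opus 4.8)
The plan is to build $Q_{pgf}$ by collapsing each glueable class of frozen vertices of $Q$ into a single prime frozen vertex, and then to verify that the evident exponent map is a coefficient specialization. First I would fix a glueable class $C_k=\{j_{k,1},\dots,j_{k,t_k}\}$ and examine its rows $v_{k,l}=(b_{j_{k,l},1},\dots,b_{j_{k,l},n})$ in $B$. By connectedness of $Q$ and the absence of arrows between frozen vertices, each $v_{k,l}$ is a nonzero integer vector, and by glueability all the $v_{k,l}$ are pairwise positive rational multiples of one another. Hence they share a primitive integer direction $w^{(k)}=(w^{(k)}_1,\dots,w^{(k)}_n)$ with $\gcd(w^{(k)}_1,\dots,w^{(k)}_n)=1$; writing $v_{k,l}=\lambda\,w^{(k)}$ with $\lambda\in\Q_{>0}$, a gcd argument (clear denominators and use primitivity of $w^{(k)}$) forces $\lambda=:n_{j_{k,l}}$ to be a positive integer. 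I then let $Q_{pgf}$ have principal part $Q^{ex}$ and exactly $s$ frozen vertices, the $k$-th carrying the frozen row $w^{(k)}$, with frozen variable $x_{j_k}$. Each $w^{(k)}$ is primitive, so $Q_{pgf}$ is prime, and rows of distinct classes are non-parallel (else the classes would coincide), so $Q_{pgf}$ is strictly gluing free; thus $Q_{pgf}$ is prime gluing free. Defining $\varphi\colon\mathbb{P}_{pgf}\to\mathbb{P}$ on generators by $\varphi(x_{j_k})=\prod_{l=1}^{t_k}x_{j_{k,l}}^{n_{j_{k,l}}}$, injectivity is immediate: since the classes partition the frozen vertices of $Q$, the images lie in disjoint sets of free generators of $\mathbb{P}$ and are multiplicatively independent.

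The heart of the argument is to check, via Lemma \ref{EC of specialization}, that $\varphi$ is a coefficient specialization, i.e. that $\varphi(p^{pgf,\pm}_{i,\varpi})=p^{Q,\pm}_{i,\varpi}$ for every exchangeable $i$ and every admissible $\varpi$, where these denote the frozen monomials appearing in the exchange relation at $i$ in the seeds $\mu_\varpi$ of $\A_{Q_{pgf}}$ and $\A_Q$. The key observation is that glueability with a \emph{fixed} positive ratio is preserved under mutation at exchangeable vertices: if $j,j'$ are frozen with $b_{jk}=c\,b_{j'k}$ for all exchangeable $k$ and some $c\in\Q_{>0}$, then a one-line computation from the mutation rule in Definition \ref{def: quiver mutation}(b), using $|b_{jk}|=c\,|b_{j'k}|$ (which needs $c>0$), shows $\mu_i(B)$ still satisfies $b'_{jk}=c\,b'_{j'k}$ for all $k$. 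Since the mutation of a frozen row depends only on that row together with the shared principal part, and $Q$ and $Q_{pgf}$ have the same principal part, the relation $b_{j_{k,l}}=n_{j_{k,l}}w^{(k)}$ persists after any $\varpi$, that is $\mu_\varpi(b_{j_{k,l}})_i=n_{j_{k,l}}\,\mu_\varpi(w^{(k)})_i$ for all $i$; in particular, within each class all mutated entries in column $i$ share the sign of $\mu_\varpi(w^{(k)})_i$.

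With this invariance the verification is direct. For fixed $i$ and $\varpi$,
\[
\varphi\bigl(p^{pgf,+}_{i,\varpi}\bigr)
=\prod_{k:\,\mu_\varpi(w^{(k)})_i>0}\ \prod_{l=1}^{t_k} x_{j_{k,l}}^{\,n_{j_{k,l}}\,\mu_\varpi(w^{(k)})_i}
=\prod_{k:\,\mu_\varpi(w^{(k)})_i>0}\ \prod_{l=1}^{t_k} x_{j_{k,l}}^{\,\mu_\varpi(b_{j_{k,l}})_i}
=p^{Q,+}_{i,\varpi},
\]
where the last equality holds because, within each class, the sign of $\mu_\varpi(b_{j_{k,l}})_i$ equals that of $\mu_\varpi(w^{(k)})_i$, so the surviving factors coincide; the computation for $p^-$ is identical. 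By Lemma \ref{EC of specialization}, $\varphi$ is a coefficient specialization, and it is injective as noted above.

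Finally, for uniqueness I would argue that any prime gluing free $(Q_1,\x_1,\A_{Q_1})$ with principal part $Q^{ex}$ and an injective coefficient specialization $\varphi_1\to\mathbb{P}$ having the stated images must agree with the above. The stated images force, for each $k$, a distinguished frozen variable mapping to $\prod_l x_{j_{k,l}}^{n'_{j_{k,l}}}$; applying the matrix form of the specialization condition (exactly the computation $A^tB'=B''$ of Lemma \ref{uniqueness of universal algebra}) shows the corresponding frozen row of $Q_1$ must be $b_{j_{k,l}}/n'_{j_{k,l}}$, which, being forced to be a primitive integer vector by primality, equals $w^{(k)}$, whence $n'_{j_{k,l}}=n_{j_{k,l}}$. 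The same matrix equation, together with injectivity and the fact that the frozen rows of $Q$ realize exactly the $s$ pairwise non-parallel directions $w^{(1)},\dots,w^{(s)}$, rules out any further frozen vertices, so $Q_1\cong Q_{pgf}$ and $\varphi_1=\varphi$. I expect the main obstacle to lie precisely here, in excluding such additional frozen vertices in a competing prime gluing free model; the existence half, by contrast, reduces cleanly to the single-step mutation-invariance of the glueability ratio and the integrality of the $n_{j_{k,l}}$.
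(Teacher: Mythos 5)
Your existence argument is essentially the paper's own proof. The paper's Step 1 defines $n_{j_{k,l}}$ as the g.c.d.\ of the row of $j_{k,l}$ and sets $\overline{b}_{j_k i}=b_{j_{k,l}i}/n_{j_{k,l}}$, which is exactly your primitive direction $w^{(k)}$; its Step 2 makes the same one-step computation you describe (mutation of a frozen row involves only that row and the common principal part, so the proportionality with the same positive constants persists), then invokes Lemma \ref{EC of specialization} and induction; injectivity is argued the same way. So on the construction of $Q_{pgf}$ and on the verification that $\varphi$ is an injective coefficient specialization, you and the paper coincide.

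The genuine gap is in uniqueness, exactly where you predicted. Your plan is to read off the frozen rows of a competing prime gluing free model $Q_1$ from the combined matrix identity $A^{t}B^{1}=B^{Q}$ at the initial seed (the computation of Lemma \ref{uniqueness of universal algebra}), and then to exclude extra frozen vertices. But once extra frozen vertices are allowed, that identity no longer pins down the distinguished rows (their columns of $A$ receive contributions from the extra rows), and the combined identity together with injectivity and positivity of the distinguished exponents is provably too weak. Concretely, let $Q$ have principal part $1\rightarrow 2$ and frozen vertices $3,4$ with rows $(1,0)$ and $(2,0)$: there is one glueable class, and $Q_{pgf}$ has a single frozen row $(1,0)$. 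Now let $Q_1$ have frozen vertices $j_1,j_0$ with rows $(1,0)$ and $(-1,0)$; this quiver is prime gluing free, and the group homomorphism $\varphi_1$ with $\varphi_1(x_{j_1})=x_3^{2}x_4^{3}$ and $\varphi_1(x_{j_0})=x_3x_4$ is injective, sends the distinguished coefficient to a monomial with positive integer exponents, and satisfies $A^{t}B^{1}=B^{Q}$ at the initial seed. What kills this competitor is not the combined matrix identity but the sign-separated equalities $\varphi_1(p^{1,\pm}_{i})=p^{Q,\pm}_{i}$ of Lemma \ref{EC of specialization}: at $i=1$ the quiver $Q$ has trivial negative coefficient while $Q_1$ does not, forcing $\varphi_1(x_{j_1})=x_3x_4^{2}$ and $\varphi_1(x_{j_0})=1$, which contradicts injectivity. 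So the uniqueness argument must be run with the $\pm$ coefficient equalities (or with the matrix identity at all mutated seeds), and neither is carried out in your sketch. To be fair, the paper's own Step 3 disposes of uniqueness in one sentence (injectivity is asserted to exclude further coefficients), so your write-up is no less careful than the original; but as literally proposed, your method would accept a false competing model.
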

\begin{proof}
We prove this by several steps:\\

Step 1. We construct the needed prime gluing free quiver $Q_{pgf}$ (write as $\overline{Q}$ for simplicity) from $Q$. The principal part of
$\overline{Q}$ is the same as the principal part of $Q$. There are $s$ frozen vertices $\{j_1, \cdots, j_s\}$ in $\overline{Q}$, which is the
set of representatives of the sets of glueable vertices in $Q$.
We define the associating matrix $\overline{B}=(\overline{b}_{ji})_{(n+s)\times n}$ of $\overline{Q}$ as follows.
For each frozen vertex $j_k, 1 \leqslant k \leqslant s$, let $n_{j_{k,l}}$ in $\mathbb{Z}^{+}$, $1 \leqslant l \leqslant
s_k$, be the greatest common divisor of $\{b_{j_{k,l}{i}}, 1 \leqslant i \leqslant n\}$. Then we have $g.c.d~(\frac{b_{j_{k,l}1}}{n_{j_{k,l}}},
\cdots, \frac{b_{j_{k,1}{n}}}{n_{j_{k,l}}})=1$. Let
$$\overline{b}_{j_ki}=\frac{b_{j_{k,l}i}}{n_{j_{k,l}}}$$ for any $1 \leqslant k \leqslant s$ and any $1 \leqslant i \leqslant n$. Note that
$\overline{b}_{j_ki}$ is independent on the choice of $l$. In fact, all of the order sets $\{\frac{b_{j_{k,l}1}}{n_{j_{k,l}}}, \cdots,
\frac{b_{j_{k,1}n}}{n_{j_{k,l}}}\}$, $1 \leqslant l \leqslant s_k$, are the same, since all of vertices $j_{k,l}, 1 \leqslant l \leqslant s_k$
are glueable with each other. Then by the construction, $\overline{Q}$ is prime gluing free.\\

Step 2. To show that $\varphi: \overline{\mathbb{P}} \to \mathbb{P}$ gives a coefficient specialization, by lemma \ref{EC of specialization},
we only need to prove the equalities
\begin{eqnarray}\label{3}
&\varphi(\overline{p}^{\pm}_{i,\varpi})=p^{\pm}_{i,\varpi}
\end{eqnarray}
for any exchangeable vertex $i$ and admissible sequence $\varpi$.\\
For the case $\varpi=0$,
$\overline{p}_{i}^{+}=\prod\limits_{\overline{b}_{j_ki}>0} x_{j_{k}}^{\overline{b}_{j_ki}}$ and $p_{i}^{+}=\prod\limits_{b_{j_{k,l}i}>0}
x_{j_{k,l}}^{b_{j_{k,l}i}}$, we have equalities:
$$\begin{array}{rcl}
				\varphi(\overline{p}_{i}^{+}) & = & \prod\limits_{\overline{b}_{j_ki}>0} \varphi(x_{j_{k}})^{\overline{b}_{j_ki}} \\
			        & = & \prod\limits_{\overline{b}_{j_ki}>0} (x_{j_{k,1}}^{n_{j_{k,1}}}\cdots
x_{j_{k,t_k}}^{n_{j_{k,t_k}}})^{\overline{b}_{j_ki}} \\
				    & = & \prod\limits_{\overline{b}_{j_ki}>0} x_{j_{k,1}}^{b_{j_{k,1}i}}\cdots x_{j_{k,t_k}}^{b_{j_{k,t_k}i}} \\
				    & = & \prod\limits_{b_{j_{k,l}i}>0} x_{j_{k,l}}^{b_{j_{k,l}i}}\\
				    & = & p_{i}^{+}.
			\end{array}$$
Similarly, we have $\varphi(\overline{p}_{i}^{-})=p_{i}^{-}$. \\
For the case $\varpi=(i')$, where $i'$ is an exchangeable vertex, we denote by $(Q',\x',\A_{Q'})$ and
$(\overline{Q}',\overline{\x}',\A_{\overline{Q}'})$ the mutations $(\mu_{i'}(Q),\mu_{i'}(\x),\A_{\mu_{i'}(Q)})$ and
$(\mu_{i'}(\overline{Q}),\mu_{i'}(\overline{\x}),\A_{\mu_{i'}(\overline{Q})})$ respectively. We denote by $B'=(b'_{ji})_{(n+m)\times n}$ and
$\overline{B}'=(\overline{b}'_{ji})_{(n+s)\times n}$ the corresponding exchange matrices of $Q'$ and $\overline{Q}'$ respectively. Then it is
clearly that the exchange part of $Q'$ and $\overline{Q}'$ are the same. For any frozen vertex $j_{k,l}$ in $Q$,
$$b'_{j_{k,l}i}= \left\{\begin{array}{ll}
				b_{j_{k,l}i} + b_{j_{k,l}i'}b_{i'i} &if~ i\neq i'~ and~ b_{j_{k,l}i'}b_{i'i}\geqslant 0;\\
				b_{j_{k,l}i} - b_{j_{k,l}i'}b_{i'i} &if~ i\neq i'~ and~  b_{j_{k,l}i'}b_{i'i}< 0;\\
				-b_{j_{k,l}i} &if~ i=i'.
			\end{array}\right.$$
For any frozen vertex $j_{k}$ in $\overline{Q}$,
$$\overline{b}'_{j_ki}= \left\{\begin{array}{ll}
				\overline{b}_{j_ki} + \overline{b}_{j_ki'}\overline{b}_{i'i} &if~ i\neq i'~ and~ \overline{b}_{j_ki}\overline{b}_{i'i}\geqslant
0;\\
\overline{b}_{j_ki} - \overline{b}_{j_ki'}\overline{b}_{i'i} &if~ i\neq i'~ and~ \overline{b}_{j_ki}\overline{b}_{i'i}< 0;\\
				-\overline{b}_{j_ki} &if~ i=i'.
			\end{array}\right.$$
Note that $n_{j_{k,l}}>0$, then from $b_{j_{k,l}i}=n_{j_{k,l}}\overline{b}_{j_ki}$, $b_{j_{k,l}i'}=n_{j_{k,l}}\overline{b}_{j_ki'}$ and
$b_{i'i}=\overline{b}_{i'i}$, we have $b'_{j_{k,l}i}=n_{j_{k,l}}\overline{b}'_{j_ki}$. Let $g=g.c.d~(\overline{b}'_{j_k1}, \cdots,
\overline{b}'_{j_kn})$, then $g ~|~ \overline{b}'_{j_ki'}=-\overline{b}_{j_ki'}$, and thus $g ~|~ \overline{b}_{j_ki}$ for any $1 \leqslant i
\leqslant n$. Because $g.c.d~(\overline{b}_{j_k1}, \cdots, \overline{b}_{j_kn})=1$, we have $g=1$ .
Thus $g.c.d~(b'_{j_{k,l}{1}}, \cdots, b'_{j_{k,l}{n}})=n_{j_{k,l}}$.
Therefore from the construction of the prime gluing free quiver, we have $(\mu_{i'}(Q))_{pgf}=\mu_{i'}(Q_{pgf})$. Then by the case $\varpi=0$,
$\varphi(\overline{p}^{\pm}_{i,(i')})=p^{\pm}_{i,(i')}$. Finally, we prove the equalities \ref{3} by inductions.\\

Step 3. Since $\mathbb{P}_{pgf}$ and $\mathbb{P}$ are both free abelian groups, from the construction of the coefficient specialization
$\varphi$, it is not hard to see that $\varphi$ is injective. The injectivity of $\varphi$ yields that there are no more coefficients in $\A_{Q_{pgf}}$, excepting the ones $j_1, \cdots, j_s$, this guarantees the uniqueness of $(Q_{pgf},\x_{pgf},\A_{Q_{pgf}})$.
\end{proof}

\begin{rem}\label{rem of gf CA}
\begin{itemize}
\item[(a)] It is not hard to see that $\varphi$ not only maps $\A_{Q_{pgf}}[\mathbb{P}_{pgf}^{-1}]$ to $\A_Q[{\mathbb{P}}^{-1}]$, but also
    induces an injective algebra homomorphism from $\A_{Q_{pgf}}$ to $\A_Q$, which maps exchangeable cluster variables to exchangeable ones.
    We call $Q_{pgf}$ the prime gluing free quiver of $Q$ and $(Q_{pgf},\x_{pgf},\A_{Q_{pgf}})$ the prime gluing free rooted cluster algebra
    of $(Q,\x,\A_Q)$.
\item[(b)] We consider the disjoint collection of sets $\{j_{1,1}, \cdots , j_{1,t_1}\}, \cdots , \{j_{s,1}, \cdots , j_{s,t_s}\}$ of frozen
    vertices of $Q$, where any two vertices (if there exist) in each set are strictly glueable. Then we define $Q_{gf}$ as the quiver obtained
    from $Q$ by deleting the vertices $j_{k,l}$ for any $1 \leqslant k \leqslant s$ and $2 \leqslant l \leqslant s_k$. We denote by
    $j_k=j_{k,1}$ for simplicity. Then $Q_{gf}$ is gluing free. We call it the gluing free quiver of $Q$. It is not hard to see that we have
    similar results as in above theorem. That is, there is a unique injective coefficient specialization $\varphi: \mathbb{P}_{gf} \to
    \mathbb{P}$ such that for any $1 \leqslant k \leqslant s$, $\varphi(x_{j_k})=x_{j_{k,1}} x_{j_{k,2}}\cdots x_{j_{k,t_k}}$. Similarly,
    $\varphi$ induces an injective algebra homomorphism from $\A_{Q_{gf}}$ to $\A_Q$, which maps exchangeable cluster variables to
    exchangeable ones. We call $(Q_{gf},\x_{gf},\A_{Q_{gf}})$ the gluing free rooted cluster algebra of $(Q,\x,\A_Q)$.
\end{itemize}
\end{rem}

\begin{defn}
Let $(Q',\x',\A_{Q'})$ and $(Q'',\x'',\A_{Q''})$ be two rooted cluster algebras.
We say that $Q'$ ($(Q',\x',\A_{Q'})$ respectively) and $Q''$ ($(Q'',\x'',\A_{Q''})$ respectively) are gluing
equivalent, if their prime gluing free quivers are isomorphic.
\end{defn}

Note that the gluing equivalent relation gives rise to an equivalent relation on ice quivers (thus on rooted cluster algebras). By this
equivalent relation, we separate ice quivers (rooted cluster algebras respectively) to gluing equivalent classes, which consist of the gluing
equivalent ice quivers (gluing equivalent rooted cluster algebras respectively). Each equivalent class is uniquely determined by a prime gluing
free ice quiver. This prime gluing free ice quiver is a universal element in the sense of that there is a coefficient specialization from this quiver to any quiver in the equivalent class.\\

\begin{lem}\label{main lem in CS}
Let $Q$ be an ice quiver and $\overline{Q}$ be its prime gluing free quiver (or gluing free quiver).
\begin{itemize}
\item[(a)] For any admissible sequence $\varpi$, $\mu_\varpi(\overline{Q})$ is the prime gluing free quiver (or gluing free quiver
    respectively) of $\mu_\varpi(Q)$;
\item[(b)] For any admissible sequence $\varpi$, $\mu_\varpi(\overline{Q}) \cong \overline{Q}$ (or $\overline{Q}^{op}$) if and only if
    $\mu_\varpi(Q) \cong Q$ (or $Q^{op}$).
\end{itemize}
\end{lem}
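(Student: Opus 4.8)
\emph{Strategy and Part (a).} Both parts reduce to the single-mutation case already isolated in Step~2 of the proof of Theorem~\ref{main thm 1 of pgq}, which I then propagate along $\varpi$ by induction on its length. The key single step is exactly the identity $(\mu_{i'}(Q))_{pgf}=\mu_{i'}(Q_{pgf})$ proved there: for one exchangeable vertex $i'$ it is shown that $b'_{j_{k,l}i}=n_{j_{k,l}}\overline{b}'_{j_ki}$, that $g.c.d(b'_{j_{k,l}1},\dots,b'_{j_{k,l}n})=n_{j_{k,l}}$, and that $g.c.d(\overline{b}'_{j_k1},\dots,\overline{b}'_{j_kn})=1$. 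I would record three consequences explicitly: mutation does not change the partition of frozen vertices into glueable classes (if $b_{ji}=n_{jk}b_{ki}$ for all exchangeable $i$ then, since $n_{jk}>0$, the same ratio persists after $\mu_{i'}$ by the frozen-row mutation rule recalled in Step~2), it fixes each integer multiplicity $n_{j_{k,l}}$, and it keeps the reduced rows prime. The single-step identity then gives the claim for $\varpi=(i')$, and an induction on the length of $\varpi$ — applying the (general-in-$Q$) single step to $\mu_{\varpi'}(Q)$ and its prime gluing free quiver $\mu_{\varpi'}(\overline{Q})$ — yields $\mu_{\varpi}(\overline{Q})=(\mu_{\varpi}(Q))_{pgf}$. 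The gluing free (non-prime) variant is identical, with ``gcd $=1$'' replaced by ``all rows in a strictly glueable class are equal'' and all multiplicities equal to $1$.

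\emph{Part (b), ``$\Leftarrow$''.} This direction is formal. Any ice-quiver isomorphism $\sigma$ preserves frozen vertices and arrow multiplicities, hence sends a glueable pair with ratio $n$ to a glueable pair with the same ratio and preserves the gcd of each frozen row; therefore $\sigma$ descends to an isomorphism $\overline{\sigma}$ of the associated prime gluing free quivers, and moreover $\overline{Q^{op}}=(\overline{Q})^{op}$. Applying this to a hypothetical isomorphism $\mu_{\varpi}(Q)\cong Q$ (resp. $Q^{op}$) and using part (a) to identify $(\mu_{\varpi}(Q))_{pgf}=\mu_{\varpi}(\overline{Q})$ gives $\mu_{\varpi}(\overline{Q})\cong\overline{Q}$ (resp. $\overline{Q}^{op}$).

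\emph{Part (b), ``$\Rightarrow$'', and the main obstacle.} Here I must \emph{lift} an isomorphism $\tau\colon\mu_{\varpi}(\overline{Q})\to\overline{Q}$ to the full quivers. On exchangeable vertices I keep $\tau$; since $\mu_{\varpi}(Q)$ and $Q$ share their principal parts with $\mu_{\varpi}(\overline{Q})$ and $\overline{Q}$, the exchangeable rows then match automatically. For the frozen rows I use part (a): each frozen vertex $j_{k,l}$ of $\mu_{\varpi}(Q)$ has row $n_{j_{k,l}}\,\mu_{\varpi}(\overline{b})_{j_k}$, and $\tau$ identifies $\mu_{\varpi}(\overline{b})_{j_k}$ (up to the fixed reindexing of columns) with the prime row $\overline{b}_{j_{\pi(k)}}$ of the class $\pi(k)$ to which it sends $j_k$. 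I therefore want to send $j_{k,l}$ to a frozen vertex of class $\pi(k)$ carrying the \emph{same} multiplicity. The point where this can break — and the main obstacle — is that, because the prime rows of distinct classes are pairwise non-proportional, $\pi$ is pinned down by $\tau$, and such a bijection exists only if for every class the multiset of multiplicities of class $k$ equals that of class $\pi(k)$. I would establish this matching from the fact (part (a)) that mutation fixes every multiplicity and every class, so that $\mu_{\varpi}(Q)$ and $Q$ carry \emph{identical} decoration data indexed by the same set of classes; the connectedness hypothesis on the principal part, together with its rank, is what excludes the degenerate resonances (e.g.\ a single mutation merely flipping the sign of one coordinate and interchanging two classes of different sizes) in which such a $\pi$ could fail to preserve the decoration. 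Granting the matching, the chosen bijection of frozen vertices together with $\tau$ on exchangeable vertices is the desired isomorphism $\mu_{\varpi}(Q)\cong Q$, and the $Q^{op}$ case follows verbatim using $\overline{Q^{op}}=(\overline{Q})^{op}$.
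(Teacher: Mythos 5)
Your part (a) and your ``$\Leftarrow$'' paragraph are correct and are essentially the paper's own argument: the one-step identity $(\mu_{i'}(Q))_{pgf}=\mu_{i'}(Q_{pgf})$ from Step 2 of the proof of Theorem \ref{main thm 1 of pgq}, induction on the length of $\varpi$, and descent of any ice-quiver isomorphism through the (prime) gluing free construction.

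The genuine gap is your ``$\Rightarrow$'' paragraph, and it cannot be repaired: the implication $\mu_\varpi(\overline{Q})\cong\overline{Q}$ (or $\overline{Q}^{op}$) $\Rightarrow$ $\mu_\varpi(Q)\cong Q$ (or $Q^{op}$) is false, and the paper itself exhibits a counterexample in Remark \ref{remark gluing free CA}. There the principal part is $1\to 2$, connected of rank two (exactly the situation your ``connectedness plus rank'' argument claims to exclude), the frozen vertices $3,4,5$ carry rows $(1,0),(0,1),(0,1)$, and the gluing free quiver $\overline{Q}$ (which here is also the prime gluing free quiver, all rows being prime) retains frozen vertices $3,4$. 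One checks that $\mu_{1}\mu_{2}(\overline{Q})\cong\overline{Q}^{op}$, while $\mu_{1}\mu_{2}(Q)$ is isomorphic to neither $Q$ nor $Q^{op}$ (compare in/out-degrees at the exchangeable vertices). The mechanism is precisely the obstruction you isolated: the unique isomorphism $\mu_{1}\mu_{2}(\overline{Q})\to\overline{Q}^{op}$ swaps the exchangeable vertices $1,2$ and the frozen vertices $3,4$, so the induced permutation $\pi$ of classes exchanges the singleton class $\{3\}$ with the two-element class $\{4,5\}$; the multisets of multiplicities do not match and no lift exists. Thus the ``degenerate resonance'' you hoped to rule out occurs already in the simplest example, and the sentence in which you dismiss it is exactly where the proof breaks.

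The honest conclusion, rather than a search for extra hypotheses, is that the ``if and only if'' in the statement is an overstatement — in the paper as well. The paper's one-line proof (``(b) follows from (a) and the uniqueness of the prime gluing free quiver'') establishes only the descent direction, since uniqueness gives that isomorphic ice quivers have isomorphic prime gluing free quivers; the converse would require gluing-equivalent quivers to be isomorphic, which is what fails above. No damage occurs downstream, because the only direction ever invoked (in the proof of Theorem \ref{main thm 2 of pgq}, and through it in Theorem \ref{AA subgroup of AE} and Theorem \ref{CCA subgroup of CA}) is that a cluster automorphism of $\A_{Q}$, which sends $Q$ to a quiver isomorphic to $Q$ or $Q^{op}$, induces one of $\A_{\overline{Q}}$. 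So the correct fix for your write-up is to prove part (b) as the one-way implication of your ``$\Leftarrow$'' paragraph and to cite the remark's example as showing that the converse direction is genuinely false.
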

\begin{proof}
Part (a) follows from Step 2 in the proof of above theorem that the construction of a prime gluing free quiver (or gluing free quiver) commutates with the quiver mutation. Part $(b)$ follows from $(a)$ and the uniqueness of a prime
gluing free quiver (or gluing free quiver respectively).
\end{proof}
\begin{defn}\label{def: gluing free cluster algebra}
We say a cluster algebra $\A$ prime gluing free (gluing free respectively), if there is a rooted cluster algebra $(Q,\x,\A)$ of $\A$ such that $Q$ is prime gluing free ( gluing free respectively).
\end{defn}
Then it follows from Lemma \ref{main lem in CS} that a cluster algebra is prime gluing free (gluing free respectively), if and only if all of its exchange quivers are prime gluing free (gluing free respectively). We define $\A_{\overline{Q}}$ constructed in Theorem \ref{main thm 1 of pgq} (Remark \ref{rem of gf CA}(b) respectively) as the prime gluing free (gluing free respectively) cluster algebra of $\A_{Q}$.\\

Let $B_{n\times n}$ be an skew-symmetric square matrix, we call a cluster algebra $\A^{pr}$ with the initial matrix $B^{pr}={B\choose I_n}$ a cluster algebra with principal coefficients, where $I_n$ is an identity matrix.

\begin{prop}\label{pgf of PCA}
Any cluster algebra with principal coefficients is prime gluing free.
\end{prop}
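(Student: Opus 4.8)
The plan is to verify the property directly on the initial seed and then appeal to Definition \ref{def: gluing free cluster algebra}, which only asks for \emph{some} rooted cluster algebra of $\A^{pr}$ whose quiver is prime gluing free. The natural candidate is the initial quiver, whose exchange matrix is $B^{pr}=\binom{B}{I_n}$. First I would record the shape of the frozen part: since $B^{pr}$ is $(n+n)\times n$, there are exactly $n$ frozen vertices, say $n+1,\dots,2n$, and the frozen row indexed by $n+j$ is the $j$-th row of $I_n$, i.e. $b^{pr}_{(n+j)\,i}=\delta_{ji}$ for $1\leqslant i\leqslant n$. Thus each frozen row is a standard basis vector, and the whole argument reduces to two elementary observations about such vectors.

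For primeness in the sense of Definition \ref{definition gluing free cluster algebras}(f), the string $(b^{pr}_{(n+j)1},\dots,b^{pr}_{(n+j)n})$ has a single nonzero entry, equal to $1$ in position $j$, so its greatest common divisor is $1$. Hence every frozen vertex is prime and the quiver is prime. For strict gluing freeness in the sense of Definition \ref{definition gluing free cluster algebras}(d), I would take two distinct frozen vertices $n+j$ and $n+k$ with $j\neq k$ and suppose they are glueable, so that $\delta_{ji}=n_{jk}\,\delta_{ki}$ for all $i$ and some positive rational $n_{jk}$. Evaluating at $i=j$ gives $1=n_{jk}\cdot 0=0$, a contradiction; so no two frozen vertices are glueable. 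Combining the two observations, the initial quiver is prime gluing free by Definition \ref{definition gluing free cluster algebras}(g), whence $\A^{pr}$ is prime gluing free.

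There is essentially no hard step here: the content is entirely that the $j$-th frozen row of $B^{pr}$ is the basis vector concentrated in column $j$, and distinct basis vectors are neither equal nor proportional. The only points demanding a little care are purely bookkeeping — matching the indexing (row $j$ of $I_n$ corresponds to the frozen vertex $n+j$), and recognizing that one need only produce a single prime gluing free seed. Lemma \ref{main lem in CS} then ensures, if desired, that the property in fact holds at every seed of $\A^{pr}$, so the choice of the initial seed entails no loss of generality.
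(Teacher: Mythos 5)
Your proposal is correct, and it is exactly the routine verification the paper intends: the paper's own "proof" is just the assertion ``This is clearly,'' and your argument spells out the underlying reason, namely that the frozen rows of $B^{pr}=\binom{B}{I_n}$ are the standard basis vectors, which have g.c.d.\ $1$ (so each frozen vertex is prime) and are pairwise non-proportional over the positive rationals (so no two frozen vertices are glueable). Your bookkeeping with Definition \ref{definition gluing free cluster algebras} and the appeal to Definition \ref{def: gluing free cluster algebra} via the initial seed are both accurate, so there is nothing to add.
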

\begin{proof}
This is clearly.
\end{proof}
\begin{prop}\label{pgf of UCA}
Any universal geometric cluster algebra is prime gluing free.
\end{prop}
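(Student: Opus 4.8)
The plan is to exploit the universal property together with the canonical injective coefficient specialization produced by the prime gluing free construction of Theorem \ref{main thm 1 of pgq}, and to show that this injection is forced to be an isomorphism. So let $\A^{univ}$ be a universal geometric cluster algebra in $RCA(Q)$, with ice quiver $Q^{univ}$ and coefficient group $\mathbb{P}^{univ}$, and form its prime gluing free rooted cluster algebra $(\overline{Q}, \overline{\x}, \A_{\overline{Q}})$ as in Theorem \ref{main thm 1 of pgq}. Since the prime gluing free construction leaves the principal part unchanged, $\A_{\overline{Q}}$ again lies in $RCA(Q)$, and Theorem \ref{main thm 1 of pgq} supplies an injective coefficient specialization $\varphi: \mathbb{P}_{pgf} \to \mathbb{P}^{univ}$ of the explicit form $\varphi(x_{j_k}) = x_{j_{k,1}}^{n_{j_{k,1}}} \cdots x_{j_{k,t_k}}^{n_{j_{k,t_k}}}$.

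Next I would apply the universal property of $\A^{univ}$ (Definition \ref{definition coefficient specilazation}(b)) to the algebra $\A_{\overline{Q}}$ itself, obtaining a coefficient specialization $\psi: \mathbb{P}^{univ} \to \mathbb{P}_{pgf}$. By Lemma \ref{EC of specialization} a composite of coefficient specializations is again a coefficient specialization, since it preserves the exchange coefficients $p^{\pm}_{i,\varpi}$ step by step; hence $\varphi \circ \psi$ is a coefficient specialization from $\A^{univ}$ to itself. The uniqueness clause in the universal property, together with the fact that $\mathrm{id}_{\mathbb{P}^{univ}}$ is such a specialization, forces $\varphi \circ \psi = \mathrm{id}_{\mathbb{P}^{univ}}$. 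Thus $\varphi$ is surjective; being also injective, it is an isomorphism of free abelian groups.

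Finally I would read off the two defining conditions of prime gluing freeness from the bijectivity of the explicit map $\varphi$. Comparing ranks gives $s = m$, so the partition of the frozen vertices of $Q^{univ}$ into glueable classes $\{j_{k,1}, \ldots, j_{k,t_k}\}$ must consist entirely of singletons, i.e. $t_k = 1$ for all $k$, which is precisely strict gluing-freeness. With $t_k = 1$ the map becomes $\varphi(x_{j_k}) = x_{j_k}^{n_{j_k}}$ with $n_{j_k} = \gcd(b_{j_k 1}, \ldots, b_{j_k n})$, and surjectivity onto the free group generated by the $x_{j_k}$ (whose cokernel is $\bigoplus_k \mathbb{Z}/n_{j_k}\mathbb{Z}$) forces every $n_{j_k} = 1$, which is precisely primeness. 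Hence $Q^{univ}$ is prime gluing free and, by Definition \ref{def: gluing free cluster algebra}, so is $\A^{univ}$.

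I expect the only real subtlety to be the round-trip argument itself, which is the key idea rather than a genuine obstacle: one must apply the universal property not to an external test algebra but to the prime gluing free algebra distilled from $\A^{univ}$, and then invoke uniqueness of the self-specialization to upgrade the canonical injection $\varphi$ into an isomorphism. Once that is in place, the separation into the primeness and strict-gluing-free conditions is a routine reading of the explicit formula for $\varphi$.
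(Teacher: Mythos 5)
Your proof is correct, but it is organized quite differently from the paper's. The paper proves the two defining conditions separately, each by contradiction, by exhibiting a tailor-made specialization that violates uniqueness: for primeness, it divides a non-prime frozen row by its gcd to build an auxiliary quiver $\overline{Q}$, notes that the resulting specialization $\overline{\mathbb{P}}\to\mathbb{P}$ makes $\A_{\overline{Q}}$ universal as well, and derives a contradiction with Lemma \ref{uniqueness of universal algebra} because the matrix of that specialization (a diagonal matrix with an entry $q>1$) is not invertible over $\mathbb{Z}$; for strict gluing-freeness, it observes that two strictly glueable coefficients can be swapped, producing a self-specialization of $\mathbb{P}$ different from the identity, contradicting the uniqueness clause of Definition \ref{definition coefficient specilazation}(b). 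You instead run a single direct argument: take the canonical injective specialization $\varphi:\mathbb{P}_{pgf}\to\mathbb{P}^{univ}$ from Theorem \ref{main thm 1 of pgq}, use the universal property to get $\psi:\mathbb{P}^{univ}\to\mathbb{P}_{pgf}$, force $\varphi\circ\psi=\mathrm{id}$ by uniqueness of self-specializations, and read off both conditions ($t_k=1$ from rank equality, $n_{j_k}=1$ from triviality of the cokernel) from the explicit form of $\varphi$. The mechanisms are the same at bottom (composition of specializations, plus the uniqueness clause), but your route bypasses Lemma \ref{uniqueness of universal algebra} entirely, treats both conditions at once, and actually proves the slightly stronger statement that a universal geometric cluster algebra coincides with its own prime gluing free algebra; the paper's route is more local and elementary, in that the gluing-free half needs nothing beyond a swap of two variables, and it pinpoints exactly which "bad" specialization each failure would create. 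One small point to make explicit in your write-up: the identity on $\mathbb{P}^{univ}$ is a coefficient specialization (immediate from Lemma \ref{EC of specialization}), which is what lets you identify the unique self-specialization with $\mathrm{id}$.
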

\begin{proof}
Let $(Q,\x,\A_{Q})$ be a rooted cluster algebra such that $\A_{Q}$ is a universal geometric cluster algebra with coefficient group
$\mathbb{P}$. Denote by $B=(b_{ji})_{(m+n)\times n}$ the matrix associated to $Q$. To show that $\A_Q$ is prime gluing free, it is sufficient
to show that $Q$ is prime gluing free.\\

Firstly, we show that all the vertices in $Q$ are prime. Suppose that a frozen vertex $j_1$ of $Q$ is not prime. Denote by
$q=g.c.d(b_{j_11},\cdots,b_{j_1n}) \in \mathbb{Z}^{+}$ the greatest common divisor of the integers in $j_1$ row of $B$. Let $\overline{B}$ be
the matrix obtained from $B$ by replacing the elements $b_{j_1i}$ by $\frac{\displaystyle  b_{j_1i}}{\displaystyle q}$ for any $1\leqslant i
\leqslant n$. Denote by $\overline{Q}$ the ice quiver associated to $\overline{B}$. Then we have a rooted cluster algebra
$(\overline{Q},\overline{\x},\A_{\overline{Q}})$ in $RCA(Q^{ex})$, and we denote its coefficient group by $\overline{\mathbb{P}}$.
Define a group homomorphism $\varphi:\overline{\mathbb{P}} \to \mathbb{P}$ such that $\varphi(\overline{x}_{j_1})={x_{j_1}}^{q}$ and
$\varphi(\overline{x}_j)=x_j$ for the other frozen variables $\overline{x}_j$ in $\overline{\x}$. Then it is not hard to see that $\varphi$ is
a coefficient specialization. On the one hand, since a composition of any two coefficient specialization is also a coefficient specialization,
$\A_{\overline{Q}}$ is a universal geometric cluster algebra with principal part $Q^{ex}$. By the uniqueness of the
coefficient specialization, $\varphi$ is the only coefficient specialization from $\overline{\mathbb{P}}$ to $\mathbb{P}$, but note that the
matrix of $\varphi$ is not invertible. This contradicts the uniqueness of the universal geometric cluster algebra.\\

Secondly, we show that $Q$ is strictly gluing free. Suppose that $Q$ is not strictly gluing free, since $Q$ is prime, this means that there exist two strictly glueable frozen vertices $j_1$ and $j_2$ in $Q$. We define $\varphi$ as a map from $\mathbb{P}$ to $\mathbb{P}$ with
$\varphi(x_{j_1})=x_{j_2}$, $\varphi(x_{j_2})=x_{j_1}$ and $\varphi(x_{j_k})=x_{j_k}$ for the other frozen variables in $\x$. Then $\varphi$
induces a group homomorphism from $\mathbb{P}$ to $\mathbb{P}$ and in fact a coefficient specialization, which is different from the identity
coefficient specialization. This contradicts the uniqueness of the coefficient specialization. Thus $Q$ is gluing free, and we are done.

\end{proof}

\begin{prop}\label{prop of SM pgf}
Let $(S,M)$ be an oriented marked Riemann surface. Assume that $(S,M)$ is not a 4-gon. Then the cluster algebras $\A^{b}(S,M)$ associated to $(S,M)$
with coefficients is prime gluing free.
\end{prop}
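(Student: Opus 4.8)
The plan is to show that in the ice quiver $Q_{T^b}$ coming from a triangulation $T^b$ of $(S,M)$, no two frozen vertices (i.e. boundary arcs) are strictly glueable, so that $Q_{T^b}$ is gluing free; combined with primeness this gives prime gluing freeness. By Proposition \ref{prop of SM pgf} being a statement about the cluster algebra $\A^b(S,M)$, which is independent of the triangulation, it suffices by Lemma \ref{main lem in CS} to verify the condition for one convenient triangulation $T$. By Lemma \ref{lem in Surface} I may choose $T$ to contain no self-folded triangles, so that $Q_{T^b}$ is read off directly from the adjacency of arcs in triangles via rules (a)--(c).

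First I would recall how a boundary arc $\beta$ interacts with the interior. Since there are no arrows between frozen vertices, each column of the frozen part of $B$ records the signed count of arrows between a fixed boundary arc $\beta$ and each internal arc $\gamma$. A boundary arc $\beta$ sits in exactly one triangle $\Delta$; its neighbours in $Q_{T^b}$ are precisely the (at most two) other sides of $\Delta$, with signs determined by the orientation of the surface. So the row of $\beta$ in the exchange matrix is supported on the internal arcs that bound the single triangle containing $\beta$, with entries in $\{-1,0,1\}$ (before cancellation of $2$-cycles), and in particular this shows primeness: the gcd of a nonzero such row is $1$. That dispatches the prime condition cheaply.

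The heart is strict gluing freeness: I must rule out two distinct boundary arcs $\beta,\beta'$ having identical rows, i.e. $b_{\beta i}=b_{\beta' i}$ for every internal arc $i$. The key geometric point is that $\beta$ and $\beta'$ live in triangles $\Delta$ and $\Delta'$; if their rows coincide and are nonzero, then $\Delta$ and $\Delta'$ share the same internal sides with the same induced orientations, forcing $\Delta=\Delta'$ and $\beta,\beta'$ to be two boundary sides of a single triangle whose third side is internal --- a very constrained local picture. I would analyze this configuration directly: a triangle with two boundary sides $\beta,\beta'$ and one internal side $\gamma$ means $\beta$ and $\beta'$ contribute opposite-signed arrows to $\gamma$ (one follows, one is followed in the positive direction), so their entries at $\gamma$ have opposite signs and cannot be equal unless both vanish. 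The remaining degenerate case is when both rows are entirely zero, meaning $\beta$ (and $\beta'$) bound a triangle all of whose other sides are also boundary arcs; this precisely isolates the small exceptional surfaces. I expect this case analysis to pin down exactly the $4$-gon (a disc with four marked points, whose unique triangle-decomposition produces a boundary arc with a degenerate row or a coincidence of rows), which is why the hypothesis excludes it.

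The main obstacle is the careful bookkeeping of orientations and the enumeration of small/degenerate configurations: I must be sure that every way two boundary arcs could produce equal matrix rows is accounted for, including triangles with two or three boundary sides and the effect of $2$-cycle cancellation in rule (c), and then confirm that the only surface where such a coincidence survives is the excluded $4$-gon. Concretely I would argue that whenever $(S,M)$ is not a $4$-gon, one can choose (using Lemma \ref{lem in Surface} and, if necessary, a flip to spread the boundary arcs into distinct triangles) a triangulation in which every boundary arc has a nonzero row supported differently from every other boundary arc, yielding strict gluing freeness; together with the primeness established above, this shows $Q_{T^b}$ is prime gluing free, and hence $\A^b(S,M)$ is prime gluing free by Definition \ref{def: gluing free cluster algebra} and Lemma \ref{main lem in CS}.
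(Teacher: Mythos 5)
Your overall strategy (choose a triangulation without self-folded triangles via Lemma \ref{lem in Surface}, note that frozen rows have entries in $\{0,\pm1\}$, deduce primeness, then do a sign analysis to rule out glueable pairs) is the same as the paper's, but the pivotal step of your case analysis is false, and it fails exactly at the surface the hypothesis is designed to exclude. You claim that if two distinct boundary arcs $\beta,\beta'$ have coinciding nonzero rows, then their triangles $\Delta,\Delta'$ share the same internal sides with the same induced orientations, forcing $\Delta=\Delta'$. The $4$-gon refutes this: in Example \ref{exm:ice quiver of 4-gon} the opposite boundary arcs $2$ and $4$ lie in the two \emph{distinct} triangles glued along the diagonal $1$, their rows are both $(+1)$ (and those of $3$ and $5$ are both $(-1)$), even though the two triangles induce opposite orientations on the shared diagonal, as they must on any oriented surface. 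The orientation argument you invoke does yield \emph{opposite} rows when $\Delta\neq\Delta'$ share \emph{two} internal sides; but when each triangle has exactly one internal side (an ``ear'' with two boundary sides), opposite induced orientations on the shared arc are perfectly compatible with \emph{equal} entries. Consequently your analysis never genuinely uses the ``not a $4$-gon'' hypothesis: the zero-row degenerate case you reserve for it corresponds to a triangle all of whose sides are boundary arcs, i.e.\ the $3$-gon, which is excluded from the class of surfaces at the outset (and so zero rows never occur). As written, your argument would ``prove'' that the ice quiver of the $4$-gon is gluing free, which is false.

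The missing case --- $\Delta\neq\Delta'$ sharing a single internal arc, each triangle having two boundary sides --- is precisely where the paper's proof does its work: glueability with a \emph{positive} ratio pins the would-be partner $k$ of $j$ to one specific side of the second triangle at the shared arc, and one then shows that the two remaining sides of the resulting quadrilateral must also be boundary arcs (if one of them were exchangeable, glueability would force an arrow from it to the frozen vertex $k$, putting $k$ in two triangles and hence making it internal, a contradiction); since $S$ is connected, $S$ is then the union of these two triangles, i.e.\ the $4$-gon, contradicting the hypothesis. Note also that your concluding fallback --- flipping ``to spread the boundary arcs into distinct triangles'' so that all rows have distinct supports --- cannot work: every triangulation of an $n$-gon with $n\geqslant 4$ has at least two ears, so triangles with two boundary sides are unavoidable. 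The within-one-ear sign argument you give is correct, but the configuration of two ears glued along one internal arc can only be excluded by the global observation that the surface is then itself a $4$-gon.
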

\begin{proof}
To prove that $\A^{b}(S,M)$ is prime gluing free, we only need to find a prime gluing free quiver of $\A^{b}(S,M)$.
Assume that $T$ is an ideal triangulation without self-floded triangles, Lemma \ref{lem in Surface} makes sense the assumption. Let $T^{b}$ be
the corresponding triangulation with boundary. Denote by $Q^{b}$ the quiver of $T^{b}$. We unify the arcs in $T^{b}$ with the vertices in
$Q^{b}$. Firstly, since each boundary is contained in a single triangle, there is at most one arrow between its corresponding vertex and any
exchangeable vertex. Thus in $Q^{b}$, each frozen vertex is prime. Secondly, we prove that $\A^{b}(S,M)$ is gluing free. Since there are no
self-floded triangles in $T$, the type I triangle in Figure \ref{puzzle-pieces} is the only possible puzzle piece of $T^{b}$. Let $j$ be a
frozen vertex in $Q^{b}$, assume that $j$ is adjacent to an exchangeable vertex $i$. Without loss of generality, we assume that they are in the
following triangles, where the orientation is clockwise. Then there is an arrow from $j$ to $i$. If there exists a frozen vertex $k$ glueable
with $j$, then by the orientation of the surface, it must be in the following position. We claim that the vertex $ac$ must be frozen. In fact,
if $ac$ is exchangeable, then there is an arrow from $ac$ to $j$. Since $j$ and $k$ are glueable, there is an arrow from $ac$ to $k$. Thus $ac$
and $k$ must be in a triangle of $T^{b}$, which is depicted as $\Delta acd$ in the picture. Therefore $k$ is an exchangeable vertex, and we
have a contradiction. Similarly, $bd$ is also a frozen vertex. Thus $(S,M)$ is a 4-gon, a contradiction to the assumption. Therefore
$\A^{b}(S,M)$ is gluing free. We are done.
\begin{center}
\begin{tikzpicture}
						\draw[-,thick] (0,0) -- (1,0.9);
						\draw[-,thick] (0,0) -- (1,-0.9);
						\draw[-,thick] (2,0) -- (1,0.9);
						\draw[-,thick] (2,0) -- (1,-0.9);
						\draw[-,thick] (1,0.9) -- (1,-0.9);
                        \node[] () at (0,0) {$\bullet$};
                        \node[] () at (1,0.9) {$\bullet$};
                        \node[] () at (1,-0.9) {$\bullet$};
                        \node[] () at (2,0) {$\bullet$};
                        \node[] () at (0.4,0.6) {$j$};
                        \node[] () at (0.8,0) {$i$};
                        \node[] () at (1.4,-0.3) {$k$};
                        \node[] () at (-0.3,0) {$a$};
                        \node[] () at (1,1.2) {$b$};
                        \node[] () at (1,-1.2) {$c$};
                        \node[] () at (2.3,0) {$d$};
                        \draw[<-,thick] (0.7,1.3) arc (130:50:0.5);
                        \draw[-,thick,dashed] (-0.05,0)..controls(1,-2)..(2.05,0);
\end{tikzpicture}
\end{center}
\end{proof}

\begin{exm}\label{exm:ice quiver of 4-gon}
We consider the $4$-gon with the following triangulation.

\begin{center}

\begin{tikzpicture}
\node[] (C) at (-3,0)
                       {\begin{tikzpicture}
						\draw[thick] (0,0) circle (1);	
    				    \draw[-,thick] (0,1) -- (0,-1);
                        \node[] () at (0,0.98) {$\bullet$};
                        \node[] () at (0,-0.98) {$\bullet$};
                        \node[] () at (-1,0) {$\bullet$};
                        \node[] () at (1,0) {$\bullet$};
                        \node[] () at (0.1,0) {$1$};
                        \node[] () at (-0.8,0.8) {$2$};
                        \node[] () at (0.8,-0.8) {$4$};
                        \node[] () at (0.8,0.8) {$5$};
                        \node[] () at (-0.8,-0.8) {$3$};
                        \end{tikzpicture}};
\node[] (C) at (3,0) 						{\begin{tikzpicture}[scale=0.5]
                        \node[] () at (0,0) {$1$};
                        \node[] () at (-1.8,1.8) {$\textcolor[rgb]{0.70,0.00,0.00}{2}$};
                        \node[] () at (-1.8,-1.8) {$\textcolor[rgb]{0.70,0.00,0.00}{3}$};
                        \node[] () at (1.8,-1.8) {$\textcolor[rgb]{0.70,0.00,0.00}{4}$};
                        \node[] () at (1.8,1.8) {$\textcolor[rgb]{0.70,0.00,0.00}{5}$};
						\draw[->,thick] (-1.35,1.35) -- (-0.18,0.18);
						\draw[->,thick] (1.35,-1.35) -- (0.18,-0.18);
						\draw[->,thick] (0.18,0.18) -- (1.35,1.35);
						\draw[->,thick] (-0.18,-0.18) -- (-1.35,-1.35);
						\end{tikzpicture}};
\end{tikzpicture}
\end{center}

Then in its ice quiver $Q^{b}$, the frozen vertices $2$ and $4$, $3$ and $5$ are strictly glueable. Therefore $Q^{b}$ is not gluing free.
\end{exm}

\begin{thm}\label{main thm 2 of pgq}
Let $(Q,\x,\A_{Q})$ be a rooted cluster algebra and $(\overline{Q},\overline{\x},\A_{\overline{Q}})$ be its gluing free rooted cluster algebra.
Let $\{j_{1,1}, \cdots , j_{1,t_1}\}, \cdots , \{j_{s,1}, \cdots , j_{s,t_s}\}$ be the collections of strictly glueable vertices of $Q$. Then
\begin{itemize}
\item[(a)] there is an injective group homomorphism $\Phi$ from $Aut(\A_{Q})$ to $Aut(\A_{\overline{Q}})\times {S_{t_1}}\times\cdots \times
    {S_{t_s}}$;
\item[(b)] there is an injective group homomorphism $\Phi^{+}$ from $Aut^{+}(\A_{Q})$ to $Aut^{+}(\A_{\overline{Q}})\times
    {S_{t_1}}\times\cdots \times {S_{t_s}}$.
\end{itemize}
\end{thm}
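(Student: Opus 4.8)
The plan is to build $\Phi$ from two pieces: the automorphism that $f$ induces on the gluing free part $\A_{\overline{Q}}$, and the permutations by which $f$ shuffles the strictly glueable coefficients inside each class. First I would record how a cluster automorphism acts on coefficients. By Proposition \ref{Prop of CA}, any $f\in Aut(\A_Q)$ carries the initial seed to a seed whose quiver is isomorphic to $Q$ or to $Q^{op}$ via $p_x\mapsto p_{f(x)}$; since the frozen variables are common to all seeds, $f$ permutes $\fx$, and since both a quiver isomorphism and passage to $Q^{op}$ preserve the relation $b_{ji}=b_{ki}$ (for all exchangeable $i$) that defines strict glueability, $f$ permutes the strictly glueable classes $\{j_{k,1},\dots,j_{k,t_k}\}$ among themselves. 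Write $\pi=\pi_f\in S_s$ for the induced permutation of classes (so $t_k=t_{\pi(k)}$) and, after fixing once and for all a labelling $1,\dots,t_k$ of each class, define $\tau_k=\tau_k^f\in S_{t_k}$ by $f(x_{j_{k,l}})=x_{j_{\pi(k),\tau_k(l)}}$.

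Next I would produce $\overline{f}$. By Remark \ref{rem of gf CA}(b) the gluing free algebra embeds as a subalgebra $\iota\colon\A_{\overline{Q}}\hookrightarrow\A_Q$ with $\iota(x_{j_k})=x_{j_{k,1}}\cdots x_{j_{k,t_k}}$ and with exchangeable variables sent to exchangeable variables. Because $f$ maps the product $x_{j_{k,1}}\cdots x_{j_{k,t_k}}$ to $x_{j_{\pi(k),1}}\cdots x_{j_{\pi(k),t_k}}=\iota(x_{j_{\pi(k)}})$ and sends exchangeable cluster variables to exchangeable ones compatibly with mutation, it preserves $\iota(\A_{\overline{Q}})$; transporting through $\iota$ yields an algebra automorphism $\overline{f}$ of $\A_{\overline{Q}}$. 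That $\overline{f}$ is a \emph{cluster} automorphism follows from Lemma \ref{main lem in CS}: the gluing free construction commutes with mutation, so the seed quiver that $f$ reaches pulls back to the seed quiver that $\overline{f}$ reaches, and part (b) of that lemma ($\mu_\varpi(\overline{Q})\cong\overline{Q}$ or $\overline{Q}^{op}$ iff $\mu_\varpi(Q)\cong Q$ or $Q^{op}$) lets me verify the criterion of Proposition \ref{Prop of CA} for $\overline{f}$. Functoriality of restriction to the stable subalgebra $\iota(\A_{\overline{Q}})$ makes $f\mapsto\overline{f}$ a group homomorphism, and the same lemma shows it carries direct automorphisms to direct ones, which together with Lemma \ref{index of direct auto} will give part (b) from part (a) by restriction.

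I would then set $\Phi(f)=(\overline{f},\tau_1^f,\dots,\tau_s^f)$ and check injectivity: if $\overline{f}=\id$ then $\pi_f$ is trivial and $f$ fixes each class setwise, and if in addition every $\tau_k^f=\id$ then $f$ fixes every frozen variable while $\overline{f}=\id$ pins down its action on the exchangeable variables, forcing $f=\id$. So the kernel is trivial and the construction is natural; what remains is the one genuinely delicate point.

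The main obstacle is the final claim, that $\Phi$ is a homomorphism into the \emph{direct} product. The composition law for the coefficient permutations is $\tau_k^{fg}=\tau_{\pi_g(k)}^f\,\tau_k^g$, so the $S_{t_k}$-components multiply correctly only after the class-permutation twist coming from $\pi_g$ is removed; this is a wreath/semidirect phenomenon rather than a naive product one, and it is exactly where the real work lies. The heart of the proof is therefore to control this twist — either by showing that two classes which a cluster automorphism can interchange carry identical internal data, so that the reindexing $\tau_{\pi_g(k)}^f=\tau_k^f$ holds on the relevant elements, or by choosing the labellings compatibly with a section of $f\mapsto\pi_f$. I would spend most of the effort here, and I would test any argument against the $4$-gon of Example \ref{exm:ice quiver of 4-gon}, where the two size-two classes can genuinely be swapped and $Aut(\A_Q)$ already contains an order-four coefficient cycle and is visibly non-abelian; that example is the sharpest check that the labellings, the hypotheses, and the target group $Aut(\A_{\overline{Q}})\times S_{t_1}\times\cdots\times S_{t_s}$ are calibrated so that the asserted embedding actually holds.
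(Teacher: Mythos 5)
Your construction of $\Phi$ is, despite the different packaging through the embedding $\iota\colon \A_{\overline{Q}}\hookrightarrow \A_Q$, exactly the paper's map: the paper writes the vertex permutation of $\phi$ as $(\sigma'',\sigma_1,\dots,\sigma_s)$, defines $\phi''$ on $\overline{\x}$ by $\overline{x}_i\mapsto \mu_\varpi(\overline{x}_{\sigma(i)})$ and $\overline{x}_{j_k}\mapsto \overline{x}_{j_{\sigma''(k)}}$, checks it is a cluster automorphism via Proposition \ref{Prop of CA} and Lemma \ref{main lem in CS}(b), and sets $\Phi(\phi)=(\phi'',\sigma_1,\dots,\sigma_s)$; your $\overline{f}$ and $\tau_k^f$ coincide with $\phi''$ and $\sigma_k$, and your injectivity argument is the paper's. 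The divergence is at the last step: the paper disposes of the homomorphism property with ``it is not hard to see'', while you correctly observe that the components compose by the wreath law $\tau_k^{fg}=\tau^f_{\pi_g(k)}\tau^g_k$, so that $\Phi$ is a homomorphism into the \emph{direct} product only if the twist by $\pi_g$ can be removed. You leave this unresolved, so as a proof of the stated theorem your proposal is incomplete --- but the obstacle you flag is genuine and, in fact, cannot be overcome.

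Carrying out your own $4$-gon test settles the matter negatively. For Example \ref{exm:ice quiver of 4-gon} one has $\A_{Q^{b}}=\Z[x_1,x_1',x_2,\dots,x_5]/(x_1x_1'-x_2x_4-x_3x_5)$ with glueable classes $\{2,4\}$ and $\{3,5\}$, and the permutations $b=(x_2\,x_4)$ and $d=(x_2\,x_5)(x_3\,x_4)$ are cluster automorphisms ($b$ is a quiver automorphism, $d$ an isomorphism onto $(Q^{b})^{op}$). Their composite is the $4$-cycle $(x_2\,x_5\,x_4\,x_3)$, an element of order $4$; in particular the frozen part of $Aut(\A_{Q^{b}})$ is dihedral of order $8$, not $S_2\times S_2\times S_2$ as Example \ref{exm:group of 4-gon} claims. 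On the other hand $\overline{Q^{b}}$ is $j_1\to 1\to j_2$, and $Aut(\A_{\overline{Q^{b}}})\times S_2\times S_2\cong (\Z/2)^4$ has exponent $2$, so \emph{no} injective group homomorphism $Aut(\A_{Q^{b}})\to Aut(\A_{\overline{Q^{b}}})\times S_{2}\times S_{2}$ exists: Theorem \ref{main thm 2 of pgq} is false as stated (it carries no hypothesis excluding this case, unlike Theorem \ref{AA subgroup of AE}), and part (b) fails for the same reason. What your map does prove is the corrected statement: $\Phi$ is an injective homomorphism into the semidirect product $\bigl(S_{t_1}\times\cdots\times S_{t_s}\bigr)\rtimes Aut(\A_{\overline{Q}})$, where $\overline{g}$ acts by permuting the factors through its class permutation $\pi_g$; the direct-product conclusion holds only on the subgroup of automorphisms fixing each glueable class setwise, for instance when no two distinct classes can be interchanged. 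So what you describe as the remaining ``delicate point'' is really a correct diagnosis of an error that the paper's proof conceals behind ``it is not hard to see''.
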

\begin{proof}
We use the notations in the Remark \ref{rem of gf CA} (b). Let $\phi: \A_{Q}\to \A_{Q}$ be a cluster automorphism of $\A_{Q}$. Then it induces
a bijection from $\x$ to $\mu_{\varpi}(\x)$ for some admissible sequence $\varpi$, and a permutation $\sigma$ on the vertex set $Q_0$ of $Q$
such that $\phi(x_i)=\mu_{\varpi}(x_{\sigma(i)})$ for any $x_i\in \x$. And $\sigma$ induces a permutation $\sigma'$ on the frozen vertices of
$Q$. Since $\phi$ is an automorphism, $\sigma'$ maps strictly glueable vertices to the strictly glueable ones. Thus $\sigma'$ induces a
permutation on the partition $\{j_{1,1}, \cdots , j_{1,t_1}\}, \cdots , \{j_{s,1}, \cdots , j_{s,t_s}\}$. Therefore $\sigma'$ induces a
permutation $\sigma''$ on the frozen vertices $\{j_{1}, \cdots , j_{s}\}$ of $\overline{Q}$ which maps $j_{k}$ to $j_{\sigma''(k)}$ for each
$1\leqslant k \leqslant s$. Then we can write $\sigma'$ as $(\sigma'',\sigma_1,\cdots,\sigma_s)$ by mapping each $j_{k,l}$ to
$j_{\sigma''(k),\sigma_{k}(l)}$, where $\sigma_k$ is a permutation on $\{1, \cdots , t_k\}$.
Define $\phi':\overline{\x} \to \mu_{\varpi}(\overline{\x})$ as a map by
$\phi'(\overline{x}_i)=\mu_{\varpi}(\overline{x}_{\sigma(i)})$ for any exchangeable vertex $i$ of $\overline{Q}$ and
$\phi'(\overline{x}_{j_k})=\mu_{\varpi}(\overline{x}_{j_{\sigma''(k)}})$ for any $1\leqslant k \leqslant s$. Since $\varphi$ is a cluster
automorphism of $\A_{Q}$, by Proposition \ref{Prop of CA} (a), it maps $Q$ to an ice quiver which is isomorphic to itself or to an opposite quiver. Therefore by using Proposition \ref{Prop of CA} (a) again, Lemma \ref{main lem in CS}(b)
shows that $\phi'$ induces a cluster automorphism $\phi''$ of $\A_{\overline{Q}}$. We define $(\phi'',\sigma_1,\cdots,\sigma_s)$ as an element
$Aut(\A_{\overline{Q}})\times {S_{t_1}}\times\cdots \times {S_{t_s}}$. Then it is not hard to see that $\Phi: \phi \to
(\phi'',\sigma_1,\cdots,\sigma_s)$ is an injective group homomorphism. The second group homomorphism follows from the first one and Lemma \ref{main lem in CS}(b).
\end{proof}

\begin{rem}\label{remark gluing free CA}
Note that a permutation of strictly glueable vertices of $Q$ induces an automorphism of $Q$, then it induces an automorphism of $\A_{Q}$. Thus
$\Phi$ is surjective onto ${S_{t_1}}\times\cdots \times {S_{t_s}}$. However, $\Phi$ may not be a surjection, see in the following example. We
consider the following ice quiver $Q$ with frozen vertices $3$, $4$ and $5$. Then its gluing free quiver is $\overline{Q}$, where $3$ and $4$
are frozen vertices.
\begin{center}

\begin{tikzpicture}
\node[] (C) at (-5,0) 						{\begin{tikzpicture}[scale=0.5]
                        \node[] () at (0,0) {$Q~:$};
                        					\end{tikzpicture}};
\node[] (C) at (-3,0) 						{\begin{tikzpicture}[scale=0.5]
                        \node[] () at (-3,0) {$\textcolor[rgb]{0.70,0.00,0.00}{3}$};
                        \node[] () at (-1.5,0) {$1$};
                        \node[] () at (0,0) {$2$};
                        \node[] () at (1.5,1.5) {$\textcolor[rgb]{0.70,0.00,0.00}{4}$};
                        \node[] () at (1.5,-1.5) {$\textcolor[rgb]{0.70,0.00,0.00}{5}$};
						\draw[->,thick] (1.25,-1.25) -- (0.25,-0.25);
						\draw[<-,thick] (0.25,0.25) -- (1.25,1.25);
						\draw[->,thick] (-2.75,0) -- (-1.75,0);
						\draw[->,thick] (-1.25,0) -- (-0.25,0);
						\end{tikzpicture}};
\node[] (C) at (1,0) 						{\begin{tikzpicture}[scale=0.5]
                        \node[] () at (0,0) {$\overline{Q}~:$};
                        					\end{tikzpicture}};
\node[] (C) at (3,0) 						{\begin{tikzpicture}[scale=0.5]
                        \node[] () at (-3,0) {$\textcolor[rgb]{0.70,0.00,0.00}{3}$};
                        \node[] () at (-1.5,0) {$1$};
                        \node[] () at (0,0) {$2$};
                        \node[] () at (1.5,0) {$\textcolor[rgb]{0.70,0.00,0.00}{4}$};
						\draw[<-,thick] (0.25,0) -- (1.25,0);
						\draw[->,thick] (-2.75,0) -- (-1.75,0);
						\draw[->,thick] (-1.25,0) -- (-0.25,0);
						\end{tikzpicture}};
\end{tikzpicture}
\end{center}
Then $\mu_{1}\mu_{2}(\overline{Q})\cong {\overline{Q}}^{op}$, and it induces a opposite cluster automorphism $\phi$ of
$\A_{\overline{Q}}$. However $\mu_{1}\mu_{2}(Q)$ is neither isomorphic to $Q$ nor to $Q^{op}$. Thus $\phi$ is not an image of $\Phi$.
\end{rem}

\subsection{Exchange graphs}\label{section exchange graph}
Recall from \cite{FZ02} and \cite{FZ07} that for a cluster algebra $\A_Q$, where $Q$ is an ice quiver with $n$ exchangeable vertices, the exchange graph $E_Q$ of $\A_Q$ is a $n$-regular graph whose vertices are the seeds of $\A_Q$ and two vertices are
connected by an edge labeled by $x_i$, $1\leqslant i\leqslant n$, if and only if the corresponding seeds are adjacent in direction $x_i$. In our settings, the clusters determine the quivers, thus the vertices of $E_Q$ are in fact the clusters of $\A_Q$. It is proved in \cite{CKLP13}(Theorem 4.6) that $E_Q$ only depends on the principal part $Q^{ex}$, that is, there is an isomorphism from $E_Q$ to $E_{Q^{ex}}$. The correspondence on the vertices is given by the specialization $S_E$: for each vertex $\x$ on $E_Q$, $S_E(\x)$ is obtained from $\x$ by specializing any frozen variable in the exchangeable variables to integer $1$, and forgetting the frozen part of $\x$.\\

Now we introduce the following notion:
\begin{defn}
An automorphism of $E_Q$ is an automorphism of $E_Q$ as a graph, that is, a permutation $\sigma$ of the vertex set, such that the pair of
vertices $(u,v)$ forms an edge if and only if the pair $(\sigma(u),\sigma(v))$ also forms an edge.
\end{defn}

Then an automorphism of $E_Q$ gives a bijection on the vertices and on the edges of $E_Q$, and maps adjacent vertices to adjacent ones. It is
clearly that the natural composition of two automorphisms of $E_Q$ is again an automorphism of $E_Q$. We define an {\it automorphism group
$Aut(E_Q)$} of $E_Q$ as a group consisting of automorphisms of $E_Q$ with compositions of automorphisms as multiplications. Then the
isomorphism $S_E: E_Q \to E_{Q^{ex}}$ induces a group isomorphism from
$Aut(E_Q)$ to $Aut(E_{Q^{ex}})$, we also denote it by $S_E$.

\begin{exm}\label{Aut(E) of A3 type}
We consider the cluster algebra of type $A_3$ with initial seed $(Q,\{x_1,x_2,x_3\})$, where $Q$ is
$\xymatrix{1\ar[r]&2\ar[r]&3}.$ Then its exchange graph $E_{Q}$ is depicted in Figure \ref{automorphisms of exchange graph of type $A_3$}, where the vertex $O$ represents the seed $(Q,\{x_1,x_2,x_3\})$. Note that there are three quadrilaterals and six pentagons in $E_{Q}$. Since an automorphism of a graph maps vertices to vertices, edges to edges and thus faces to faces, it is not hard to see that the automorphism group $Aut(E_{Q})$ is generated by $\tau$ and
$\sigma$ showed in Figure \ref{automorphisms of exchange graph of type $A_3$}. The automorphism $\tau$ is given by the dashed lines , maps red
edges to red ones and maps green edges to green ones. The automorphism $\sigma$ is the reflection along the central axis. Note that the order
of $\tau$ is $6$, the order of $\sigma$ is $2$ and the only relation which they satisfy is $\sigma\tau=\tau^{5}\sigma$. Thus $Aut(E_{Q})$ is
isomorphic to the dihedral group $D_6$.

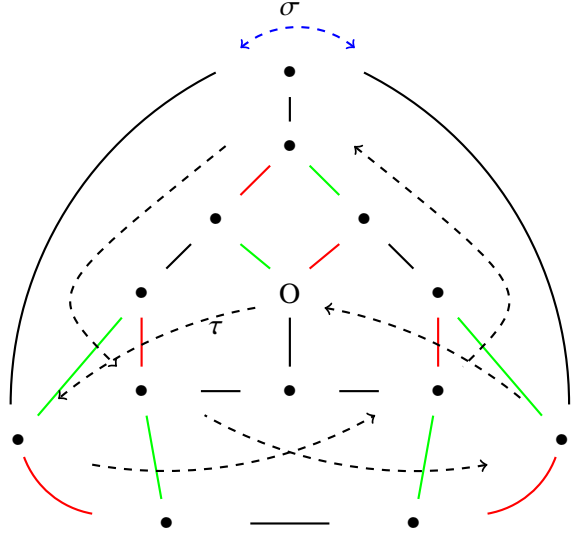
\begin{figure}
\begin{center}
{\begin{tikzpicture}[scale=0.65]
\node[] (C) at (0,0)
						{\text{O}};
\node[] (C) at (-1.5,1.5)
						{$\bullet$};
\node[] (C) at (1.5,1.5)
						{$\bullet$};
\node[] (C) at (-3,0)
						{$\bullet$};
\node[] (C) at (3,0)
						{$\bullet$};
\node[] (C) at (0,-2)
						{$\bullet$};
\node[] (C) at (0,3)
						{$\bullet$};
\node[] (C) at (0,4.5)
						{$\bullet$};
\node[] (C) at (-3,-2)
						{$\bullet$};
\node[] (C) at (3,-2)
						{$\bullet$};
\node[] (C) at (-5.5,-3)
						{$\bullet$};
\node[] (C) at (5.5,-3)
						{$\bullet$};
\node[] (C) at (-2.5,-4.7)
						{$\bullet$};
\node[] (C) at (2.5,-4.7)
						{$\bullet$};
\node[] (C) at (0,5.8)
						{$\sigma$};
\node[] (C) at (-1.5,-0.7)
						{$\tau$};

\draw[thick] (0,3.5) -- (0,4);
\draw[red,thick] (-0.4,2.6) -- (-1,2);
\draw[green,thick] (0.4,2.6) -- (1,2);
\draw[green,thick] (-0.4,0.5) -- (-1,1);
\draw[red,thick] (0.4,0.5) -- (1,1);
\draw[thick] (-2.5,0.5) -- (-2,1);
\draw[thick] (2.5,0.5) -- (2,1);
\draw[thick] (0,-0.5) -- (0,-1.5);
\draw[red,thick] (-3,-0.5) -- (-3,-1.5);
\draw[red,thick] (3,-0.5) -- (3,-1.5);
\draw[thick] (-1.8,-2) -- (-1,-2);
\draw[thick] (1.8,-2) -- (1,-2);
\draw[thick] (-0.8,-4.7) -- (0.8,-4.7);
\draw[green,thick] (-2.9,-2.5) -- (-2.6,-4.2);
\draw[green,thick] (2.9,-2.5) -- (2.6,-4.2);
\draw[green,thick] (-5.1,-2.5) -- (-3.4,-0.5);
\draw[green,thick] (5.1,-2.5) -- (3.4,-0.5);
\draw[thick] (1.5,4.5) arc (63:0:7.6);
\draw[thick] (-1.5,4.5) arc (-63:0:-7.6);
\draw[red,thick] (4,-4.5) arc (-80:-20:1.8);
\draw[red,thick] (-4,-4.5) arc (80:20:-1.8);
\draw[<->,blue,thick,dashed] (-1,5) arc (130:50:1.8);

\draw[->,thick,dashed] (-0.7,-0.3) arc (100:130:8.5);
\draw[->,thick,dashed] (-4.0,-3.5) arc (-100:-60:8.5);
\draw[<-,thick,dashed] (4.0,-3.5) arc (100:60:-8.5);
\draw[<-,thick,dashed] (1.3,3.0)..controls(5,0)..(3.5,-1.5);
\draw[<-,thick,dashed] (0.7,-0.3) arc (-100:-130:-8.5);
\draw[->,thick,dashed] (-1.3,3.0)..controls(-5,0)..(-3.5,-1.5);
\end{tikzpicture}}
\end{center}
\begin{center}
\caption{The automorphisms of exchange graph of type $A_3$}
\label{automorphisms of exchange graph of type $A_3$}
\end{center}
\end{figure}

\end{exm}

\begin{thm}\label{AA subgroup of AE}
Let $Q$ be an ice quiver with at least two exchangeable vertices.
\begin{itemize}
\item[(a)] If $Q$ is gluing free, then the cluster automorphism group $Aut(\A_Q)$ is a subgroup of $Aut(E_Q)$.
\item[(b)] If $Q$ is not gluing free, denoted by $\{j_{1,1}, \cdots , j_{1,t_1}\}, \cdots , \{j_{s,1}, \cdots , j_{s,t_s}\}$ the collections of strictly glueable vertices
    of $Q$, then the cluster automorphism group $Aut(\A_Q)$ is a subgroup of $Aut(E_Q)\times {S_{t_1}}\times\cdots \times {S_{t_s}}$.
\end{itemize}
\end{thm}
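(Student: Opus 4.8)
The plan is to realise $Aut(\A_Q)$ inside $Aut(E_Q)$ by letting a cluster automorphism act on the set of clusters. For part (a), given $f\in Aut(\A_Q)$, I would first note that $f$ carries each cluster to a cluster (Proposition \ref{Prop of CA}) and, being invertible, induces a bijection $\Psi_f$ of the vertex set of $E_Q$ (recall that clusters are exactly the vertices, since a cluster determines its quiver). Since a cluster automorphism commutes with mutation, whenever two clusters $\tilde\x$ and $\mu_i(\tilde\x)$ span an edge of $E_Q$ their images $f(\tilde\x)$ and $f(\mu_i(\tilde\x))$ again differ by a single mutation; applying the same reasoning to $f^{-1}$ shows $\Psi_f$ preserves edges in both directions, so $\Psi_f\in Aut(E_Q)$. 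The assignment $f\mapsto\Psi_f$ is visibly a group homomorphism $\Psi\colon Aut(\A_Q)\to Aut(E_Q)$, and it remains to prove that $\Psi$ is injective.

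Injectivity is the heart of the matter, and the place where both hypotheses, gluing freeness and the presence of at least two exchangeable vertices, are used. Suppose $\Psi_f=\mathrm{id}$, so that $f$ fixes every cluster setwise. For two adjacent clusters $\tilde\x$ and $\mu_i(\tilde\x)$ the intersection $\tilde\x\cap\mu_i(\tilde\x)$ consists of all variables except the mutated one; as $f$ fixes both clusters setwise it fixes this intersection setwise, hence fixes the single remaining variable, giving $f(x_i)=x_i$. Ranging over all edges, $f$ fixes every exchangeable cluster variable, so it can only permute the frozen variables, by some permutation $\rho$. To pin down $\rho$ I would apply $f$ to an exchange relation $x_ix_i'=p_i^{+}+p_i^{-}$: the left side is fixed, so $f$ must permute the two monomials $p_i^{+},p_i^{-}$. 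Here connectedness of the principal part together with having at least two exchangeable vertices guarantees that some exchangeable variable occurs in $p_i^{+}$ or $p_i^{-}$, and the exchangeable supports of the two monomials are disjoint; since $f$ fixes exchangeable variables it cannot interchange $p_i^{+}$ with $p_i^{-}$, so it fixes each. Comparing the exponents of the frozen variables then forces $b_{(n+\rho^{-1}(j))i}=b_{(n+j)i}$ for every exchangeable $i$, i.e. the frozen vertices $j$ and $\rho^{-1}(j)$ are strictly glueable. Because $Q$ is gluing free this means $\rho=\mathrm{id}$, whence $f$ fixes all cluster variables and $f=\mathrm{id}$.

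For part (b), when $Q$ is not gluing free I would pass to its gluing free quiver $\overline{Q}$, which shares the principal part $Q^{ex}$ and therefore, by \cite{CKLP13} and the specialization $S_E$, has the same exchange graph, so $Aut(E_{\overline{Q}})\cong Aut(E_Q)$. Theorem \ref{main thm 2 of pgq}(a) supplies an injective homomorphism $Aut(\A_Q)\hookrightarrow Aut(\A_{\overline{Q}})\times S_{t_1}\times\cdots\times S_{t_s}$, and since $\overline{Q}$ is gluing free with at least two exchangeable vertices, part (a) embeds $Aut(\A_{\overline{Q}})$ into $Aut(E_{\overline{Q}})\cong Aut(E_Q)$. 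Composing the two embeddings yields the desired inclusion $Aut(\A_Q)\hookrightarrow Aut(E_Q)\times S_{t_1}\times\cdots\times S_{t_s}$. I expect the injectivity argument in (a), and in particular ruling out the swap $p_i^{+}\leftrightarrow p_i^{-}$ and then exploiting gluing freeness to kill $\rho$, to be the main obstacle, whereas (b) is a formal consequence of (a) together with the gluing-free reduction.
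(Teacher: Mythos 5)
Your proof is correct and follows essentially the same approach as the paper: you realize cluster automorphisms as graph automorphisms of $E_Q$ by their action on clusters, prove injectivity of this homomorphism using gluing freeness together with the hypothesis of at least two exchangeable vertices, and deduce (b) from (a) via the gluing free quiver $\overline{Q}$, Theorem \ref{main thm 2 of pgq}, and the isomorphism $Aut(E_Q)\cong Aut(E_{\overline{Q}})$. Your injectivity argument (fixing exchangeable variables through cluster intersections, then ruling out a nontrivial permutation of frozen variables by applying $f$ to exchange relations) simply supplies, correctly, the details that the paper's proof compresses into ``it is not hard to see.''
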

\begin{proof}
(a) Let $f$ be an automorphism of $\A_Q$, then Proposition \ref{Prop of CA}(3) yields that $f$ gives rise to a map $f'$ from the vertices of
$E_Q$ to the vertices of $E_Q$. In fact $f'$ is a bijection since $f$ is an automorphism. For any vertex $\x$ and its adjacent vertex
$\x'=\x\setminus \{x\}\sqcup \{\mu_x(x)\}$ in $E_Q$, we have $f(\mu_x(x))=\mu_{f(x)}(f(x))$ since $f$ is a cluster automorphism. Thus we have
$f'(\x')=f'(\x)\setminus\{f(x)\}\sqcup\{\mu_{f(x)}(f(x)\}$. Therefore the vertices $f'(\x)$ and $f'(\x')$ are adjacent in $E_Q$. We have proved
that $f'$ is an automorphism of $E_Q$, and it is not hard to see that this yields a group homomorphism from $Aut(\A_Q)$ to $Aut(E_Q)$. For the
injectivity, if $f'$ is an identity on $E_Q$, then $f$ fix any cluster, and it is not hard to see that it fix any exchangeable variable. On the
other hand, since $Q$ is gluing free and $Q$ has at least two exchangeable vertices, any permutation of frozen variables does not induce an automorphism of $\A_Q$. Thus $f$ fixes any cluster variables, and it is an identity on $\A_Q$.
Therefore $Aut(\A_Q)$ is a subgroup of $Aut(E_Q)$.\\

(b) Let $\overline{Q}$ be the gluing free quiver of $Q$, then it is gluing free. Therefore, by (a), $Aut(\A_{\overline{Q}})$ is a subgroup of
$Aut(E_{\overline{Q}})$. Moreover, since $Aut(E_Q) \cong Aut(E_{\overline{Q}})$ and $Aut(\A_Q)$ is a subgroup of $Aut(\A_{\overline{Q}})\times
{S_{t_1}}\times\cdots \times {S_{t_s}}$ by Theorem \ref{main thm 2 of pgq}, $Aut(\A_Q)$ is a subgroup of $Aut(E_Q)\times {S_{t_1}}\times\cdots
\times {S_{t_s}}$.
\end{proof}

\begin{exm}\label{exm:group of 4-gon}
If $Q$ is an ice quiver with a single exchangeable vertex, the results in Theorem \ref{AA subgroup of AE} may be not true. We consider the cluster algebra $\A_{Q^{b}}$ of a $4$-gon with the initial triangulation given in Example \ref{exm:ice quiver of 4-gon}.
Then the exchange graph of $\A_{Q^{b}}$ is a line segment with two vertices and its automorphism group $Aut(E_{Q^{b}})$ is isomorphic to $S_2$.
$$						{\begin{tikzpicture}
						\node[] () at (-0.7,0) {$\bullet$};
						\node[] () at (0.7,0) {$\bullet$};
						\draw[-,thick] (-0.7,0) -- (0.7,0);
						\end{tikzpicture}}$$
On the other hand, $Aut(\A_{Q^{b}})\cong <\sigma_{x_1\mu_1(x_{1})}> \times <\sigma_{x_2x_{4}}> \times <\sigma_{x_3x_{5}}> \times
<\sigma_{x_2x_{5}}\sigma_{x_3x_{4}}>  \cong S_2\times S_2\times S_2\times S_2$, where we denote by $\sigma_{x_ix_j}$
($\sigma_{x_1\mu_1(x_{1})}$ respectively) the automorphism of $\A_{Q^{b}}$ induced by the permutation of cluster variables $x_i$ and $x_j$
($x_1$ and $\mu_1(x_{1})$ respectively). Then $Aut(\A_{Q^{b}})\cong S_2\times S_2\times S_2\times S_2$ is not a subgroup of
$Aut(E_{Q^{b}})\times S_2 \times S_2\cong S_2\times S_2\times S_2$.
\end{exm}

\begin{exm}\label{AA=AE for A3 type}
We consider the cluster algebra of type $A_3$ with initial seed $(Q,\{x_1,x_2,x_3\})$, where $Q$ is
$\xymatrix{1\ar[r]&2\ar[r]&3}.$ As showed in Example \ref{Aut(E) of A3 type}, $Aut(E_{Q})$ is isomorphic to the dihedral group $D_6$ generated by $\tau$ and $\sigma$.
Here $\tau$ induces a bijection on the cluster variables of $\A_Q$, and we also denote it by $\tau$. In fact $\tau$
is a cluster automorphism which is induced by the AR-translation on the cluster category of the algebra $kQ$ (see Section 3 of \cite{ASS12} for more details), it is given
by:
$$\tau: \left\{\begin{array}{rcl}
				x_1 & \mapsto & \frac{1+x_2}{x_1} \\
				x_2 & \mapsto & \frac{x_1+x_3+x_2 x_3}{x_1 x_2} \\
				x_3 & \mapsto & \frac{(1+x_1)(x_1+x_3)}{x_1 x_2 x_3}~~.
			\end{array}\right.$$
On the other hand, the automorphism $\sigma$ of $E_Q$ corresponds to the opposite automorphism of $\A_Q$ given by exchanging the cluster
variables $x_1$ and $x_2$. Therefore $Aut(E_{Q})\subseteq Aut(\A_{Q})$. Note that from Theorem \ref{AA subgroup of AE}, $Aut(\A_{Q})$ is a subgroup of $Aut(E_{Q})$, thus $Aut(\A_{Q})\cong
Aut(E_{Q}) \cong D_6$.
\end{exm}
\begin{rem}
In the subsequent paper \cite{CZb15} we prove that for a coefficient free cluster algebra $\A_Q$, the two groups $Aut(\A_{Q})$ and $Aut(E_{Q})$ are isomorphic with each other, if $\mathcal{A}$ is of finite type, excepting types of rank two and type $F_4$, or $\mathcal{A}$ is of skew-symmetric finite mutation type. For a cluster algebra of type $F_4$ and all the non-skew-symmetric cluster algebra of rank two, $Aut(\A_{Q})$ is a proper subgroup of $Aut(E_{Q})$.
\end{rem}
\begin{exm}\label{AA is proper of AE for A3 type with coefficients}
Let $Q'$ be the following quiver, where vertices $4$ and $5$ are frozen.
\begin{center}
\begin{tikzpicture}
						\node[] () at (-0.8,0) {$1$};
						\node[] () at (0,0) {$2$};
						\draw[->,thick] (-0.7,0) -- (-0.1,0);
                        \node[] () at (0.8,0) {$3$};
						\draw[->,thick] (0.1,0) -- (0.7,0);
						\draw[->,thick] (-0.1,0.2) -- (-0.7,0.6);
						\draw[->,thick] (0.1,0.2) -- (0.7,0.6);
                        \node[] () at (-0.8,0.8) {$\textcolor[rgb]{0.70,0.00,0.00}{4}$};
                        \node[] () at (0.8,0.8) {$\textcolor[rgb]{0.70,0.00,0.00}{5}$};
\end{tikzpicture}
\end{center}
The seed $(Q',\{x_1,x_2,{x_3},{x_{4}},x_5\})$ defines a cluster algebra $\A_{Q'}$. We consider the cluster automorphism group
$Aut(\A_{Q'})$. Note that the vertices $4$ and $5$ are strictly glueable, thus exchanging the corresponding cluster variables $x_4$ and $x_5$
induces a cluster automorphism $\delta$ of $Aut(\A_{Q'})$, whose order is two, thus $\{1, \delta\} \cong S_2$ is a subgroup of $Aut(\A_{Q'})$.
On the other hand, we consider the gluing free quiver of $Q'$:

\begin{center}
\begin{tikzpicture}  	\node[] () at (-2,0) {$\overline{Q'} :$};
						\node[] () at (-0.8,0) {$1$};
						\node[] () at (0,0) {$2$};
						\draw[->,thick] (-0.7,0) -- (-0.1,0);
                        \node[] () at (0.8,0) {$3$};
						\draw[->,thick] (0.1,0) -- (0.7,0);
						\draw[->,thick] (0,0.2) -- (0,0.6);
                        \node[] () at (0,0.8) {$\textcolor[rgb]{0.70,0.00,0.00}{4}$};
\end{tikzpicture}
\end{center}
The exchange graph of $\A_{\overline{Q'}}$ is depicted in Figure \ref{automorphisms of cluster algebra of type $A_3$ with a coefficient}, which is isomorphic to the
one in Figure \ref{automorphisms of exchange graph of type $A_3$}.
An automorphism in $Aut(\A_{\overline{Q'}})$ maps $\overline{Q'}$ to a quiver isomorphic to $\overline{Q'}$ or to $\overline{Q'}^{op}$. A direct compute shows that there are two seeds satisfy the above property: $\mu_{1}\mu_{3}(\overline{Q'})\cong \overline{Q'}$, $\mu_{3}\mu_{2}\mu_{1}\mu_{2}(\overline{Q'})\cong \overline{Q'}^{op}$. Then the automorphisms of
$\A_{\overline{Q'}}$ is showed in Figure \ref{automorphisms of cluster algebra of type $A_3$ with a coefficient} (compare with Figure
\ref{automorphisms of exchange graph of type $A_3$}), where $\tau$ and $\sigma$ are algebra homomorphisms determined by:
$$\tau: \left\{\begin{array}{rcl}
				x_1 & \mapsto & \frac{1+x_2}{x_1} \\
				x_2 & \mapsto & \frac{x_1+x_3 x_4+x_2 x_3 x_4}{x_1 x_2} \\
				x_3 & \mapsto & \frac{(1+x_1)(x_1+x_3 x_4)}{x_1 x_2 x_3}\\
				x_4 & \mapsto & x_4
			\end{array}\right.$$
and
$$\sigma: \left\{\begin{array}{rcl}
				x_1 & \mapsto & x_3 \\
				x_2 & \mapsto & x_2 \\
				x_3 & \mapsto & x_1\\
				x_4 & \mapsto & x_4.
			\end{array}\right.$$
Then $Aut(\A_{\overline{Q'}})=\{1, \tau^3, \tau\sigma, \tau^4\sigma\} \cong K_4$ is a proper subgroup of $Aut(E_{\overline{Q}})$, which is isomorphic
to $D_6$. Finally we have $Aut(\A_{Q'}) \cong Aut(\A_{\overline{Q'}}) \times \{1, \delta\} \cong S_2 \times S_2 \times S_2$.

\begin{figure}
\begin{center}
{\begin{tikzpicture}[scale=0.65]
\node[] (C) at (0,0)
						{\text{O}};
\node[] (C) at (-1.5,1.5)
						{$\bullet$};
\node[] (C) at (1.5,1.5)
						{$\bullet$};
\node[] (C) at (-3,0)
						{$\bullet$};
\node[] (C) at (3,0)
						{$\bullet$};
\node[] (C) at (0,-2)
						{$\bullet$};
\node[] (C) at (0,3)
						{$\bullet$};
\node[] (C) at (0,4.5)
						{$\bullet$};
\node[] (C) at (-3,-2)
						{$\bullet$};
\node[] (C) at (3,-2)
						{$\bullet$};
\node[] (C) at (-5.5,-3)
						{$\bullet$};
\node[] (C) at (5.5,-3)
						{$\bullet$};
\node[] (C) at (-2.5,-4.7)
						{$\bullet$};
\node[] (C) at (2.5,-4.7)
						{$\bullet$};
\node[] (C) at (-1,-1.3)
						{\qihao $\tau^{4}\sigma$};
\node[] (C) at (-2.6,-0.6)
						{\qihao $\tau\sigma$};
\node[] (C) at (0.3,1.6)
						{\qihao $\tau^{3}$};

\draw[thick] (0,3.5) -- (0,4);
\draw[red,thick] (-0.4,2.6) -- (-1,2);
\draw[green,thick] (0.4,2.6) -- (1,2);
\draw[green,thick] (-0.4,0.5) -- (-1,1);
\draw[red,thick] (0.4,0.5) -- (1,1);
\draw[thick] (-2.5,0.5) -- (-2,1);
\draw[thick] (2.5,0.5) -- (2,1);
\draw[thick] (0,-0.5) -- (0,-1.5);
\draw[red,thick] (-3,-0.5) -- (-3,-1.5);
\draw[red,thick] (3,-0.5) -- (3,-1.5);
\draw[thick] (-1.8,-2) -- (-1,-2);
\draw[thick] (1.8,-2) -- (1,-2);
\draw[thick] (-0.8,-4.7) -- (0.8,-4.7);
\draw[green,thick] (-2.9,-2.5) -- (-2.6,-4.2);
\draw[green,thick] (2.9,-2.5) -- (2.6,-4.2);
\draw[green,thick] (-5.1,-2.5) -- (-3.4,-0.5);
\draw[green,thick] (5.1,-2.5) -- (3.4,-0.5);
\draw[thick] (1.5,4.5) arc (63:0:7.6);
\draw[thick] (-1.5,4.5) arc (-63:0:-7.6);
\draw[red,thick] (4,-4.5) arc (-80:-20:1.8);
\draw[red,thick] (-4,-4.5) arc (80:20:-1.8);

\draw[<->,thick,dashed] (-0.7,0) arc (110:130:14);
\draw[<->,thick,dashed] (-0.3,-0.25) arc (135:115:-8);
\draw[<->,thick,dashed] (0,0.6)-- (0, 2.6);
\end{tikzpicture}}
\end{center}
\begin{center}
\caption{The automorphisms of a cluster algebra of type $A_3$ with a coefficient}
\label{automorphisms of cluster algebra of type $A_3$ with a coefficient}
\end{center}
\end{figure}
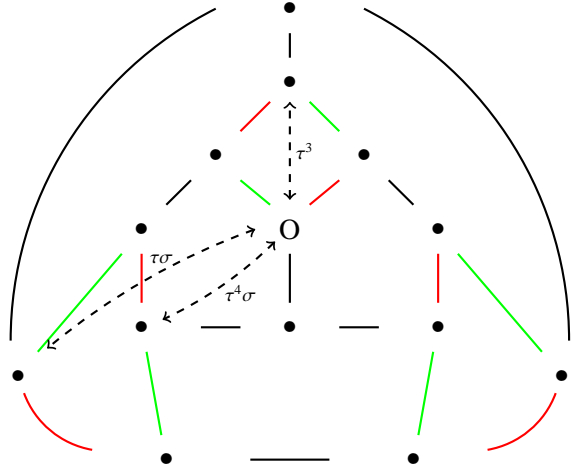

\end{exm}

\subsection{Main results}\label{section main results}
\begin{thm}\label{CCA subgroup of CA}
Let $Q'$ be an ice quiver with principal part $Q$. Assume that there are at least two vertices in $Q$.
\begin{itemize}
\item[(a)] If $Q'$ is gluing free, then $Aut(\A_{Q'})$ is a subgroup of $Aut(\A_Q)$, and $Aut^+(\A_{Q'})$ is a subgroup of $Aut^+(\A_Q)$.
\item[(b)] If $Q'$ is not gluing free, denoted by $\{j_{1,1}, \cdots , j_{1,t_1}\}, \cdots , \{j_{s,1}, \cdots , j_{s,t_s}\}$ the collections of strictly glueable vertices
    of $Q'$. then $Aut(\A_{Q'})$ is a subgroup of $Aut(\A_Q)\times {S_{t_1}}\times\cdots \times {S_{t_s}}$, and $Aut^+(\A_{Q'})$ is a
    subgroup of $Aut^+(\A_Q)\times {S_{t_1}}\times\cdots \times {S_{t_s}}$.
\end{itemize}
\end{thm}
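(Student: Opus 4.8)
The plan is to realise both $Aut(\A_{Q'})$ and $Aut(\A_Q)$ as subgroups of the single group $Aut(E_Q)$ and then to compare their images, exactly as announced in the introduction; part (b) will be reduced to part (a) by passing to the gluing free quiver and invoking Theorem \ref{main thm 2 of pgq}. For part (a), first note that the principal part $Q$ carries no frozen vertices, so it is trivially gluing free, and it has at least two exchangeable vertices by hypothesis; hence Theorem \ref{AA subgroup of AE}(a) gives an injective homomorphism $Aut(\A_Q)\hookrightarrow Aut(E_Q)$, $g\mapsto g'$, where $g'$ is the graph automorphism of $E_Q$ induced on seeds. Since $Q'$ is gluing free with the same (at least two) exchangeable vertices, the same theorem gives $Aut(\A_{Q'})\hookrightarrow Aut(E_{Q'})$, $f\mapsto f'$. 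The specialization isomorphism $S_E\colon E_{Q'}\to E_Q$ of \cite{CKLP13}, which forgets the frozen part of each seed, induces an isomorphism $Aut(E_{Q'})\cong Aut(E_Q)$, so each $f$ yields $\bar f:=S_E f' S_E^{-1}\in Aut(E_Q)$. It therefore suffices to prove that $\bar f$ lies in the image of $Aut(\A_Q)$ and that the resulting assignment is an injective homomorphism.

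The heart of part (a) is the construction of a cluster automorphism $g$ of $\A_Q$ with $g'=\bar f$. By Proposition \ref{Prop of CA} applied to $f$, the initial seed $(Q',\x')$ is sent to a seed whose quiver $Q''$ satisfies $Q''\cong Q'$ or $Q''\cong Q'^{op}$, under the correspondence $p_x\mapsto p_{f(x)}$. Under $S_E$ the initial seed of $\A_{Q'}$ maps to that of $\A_Q$, whose quiver is $Q$; and because the mutation of the principal part of an ice quiver is independent of its frozen part, the quiver sitting at the $S_E$-image of any seed is precisely the principal part of the quiver sitting at that seed. Since an isomorphism (or anti-isomorphism) of ice quivers preserves the exchangeable/frozen partition, $Q''\cong Q'$ (resp.\ $Q'^{op}$) restricts to $(Q'')^{ex}\cong Q$ (resp.\ $Q^{op}$), compatibly with the edge-labelling carried by $\bar f$ from the initial seed. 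Thus $\bar f$ sends the initial seed of $\A_Q$ to a seed with isomorphic (resp.\ opposite) quiver under the induced variable correspondence, so by the coefficient-free case of Proposition \ref{Prop of CA} (\cite{ASS12}) there is a cluster automorphism $g$ of $\A_Q$ realising this correspondence. As a cluster automorphism commutes with mutations, $g'$ is determined by its value on the initial seed, whence $g'=\bar f$; injectivity of $Aut(\A_Q)\hookrightarrow Aut(E_Q)$ makes $g$ unique, so $f\mapsto g$ is a well-defined injective group homomorphism, giving $Aut(\A_{Q'})\le Aut(\A_Q)$. Since the construction sends an isomorphism to an isomorphism (not an opposite), it restricts to $Aut^+(\A_{Q'})\le Aut^+(\A_Q)$.

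For part (b), let $\overline{Q'}$ be the gluing free quiver of $Q'$ (Remark \ref{rem of gf CA}(b)). It is obtained from $Q'$ by deleting frozen vertices, so it is gluing free and has the same principal part $Q$, in particular at least two exchangeable vertices; hence part (a) yields $Aut(\A_{\overline{Q'}})\le Aut(\A_Q)$ and $Aut^+(\A_{\overline{Q'}})\le Aut^+(\A_Q)$. On the other hand, Theorem \ref{main thm 2 of pgq} embeds $Aut(\A_{Q'})$ into $Aut(\A_{\overline{Q'}})\times S_{t_1}\times\cdots\times S_{t_s}$, and $Aut^+(\A_{Q'})$ into $Aut^+(\A_{\overline{Q'}})\times S_{t_1}\times\cdots\times S_{t_s}$. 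Composing the two embeddings gives $Aut(\A_{Q'})\le Aut(\A_Q)\times S_{t_1}\times\cdots\times S_{t_s}$ and the direct analogue, as required.

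I expect the main obstacle to be the descent step inside part (a): transporting the graph automorphism $f'$ across the specialization $S_E$ and recognising $\bar f$ as induced by a genuine cluster automorphism of the coefficient-free algebra $\A_Q$. This rests on two compatibility facts — that the principal part of an ice quiver mutates independently of its frozen part, so that $S_E$ intertwines the quiver data attached to seeds, and that an ice-quiver (anti-)isomorphism restricts to the principal parts — which together let the quiver-compatibility criterion of Proposition \ref{Prop of CA} pass from $Q'$ to $Q$. Once this is secured, the remaining verifications (that $f\mapsto g$ is a homomorphism, is injective, and respects directness) are routine.
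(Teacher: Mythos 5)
Your proof is correct and follows essentially the same route as the paper: both arguments realize $Aut(\A_{Q'})$ and $Aut(\A_Q)$ inside $Aut(E_Q)$ via Theorem \ref{AA subgroup of AE} and the specialization isomorphism $S_E$ of \cite{CKLP13}, and both reduce part (b) to part (a) through Theorem \ref{main thm 2 of pgq} applied to the gluing free quiver. The only cosmetic difference is that the paper defines the descended map directly as $S_{\A}(f)=\Phi f\Phi^{-1}$, where $\Phi$ is the specialization rooted cluster morphism of \cite{CZ14}, and then checks that the square with $i$, $i'$, $S_E$ commutes, whereas you transport the graph automorphism across $S_E$ first and then land back in $Aut(\A_Q)$ by invoking Proposition \ref{Prop of CA}.
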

\begin{proof}
(a) We define a group homomorphism $S_{\A}$ from $Aut(\A_{Q'})$ to $Aut(\A_Q)$, and prove the following diagram of group
homomorphisms commutes :
$$\xymatrix{Aut(E_{Q'})\ar[r]^{S_E}&Aut(E_Q)\\
Aut(\A_{Q'})\ar[u]_{i'}\ar[r]^{S_{\A}}&Aut(\A_{Q})\ar[u]_{i}~,}$$
where $i$ and $i'$ are injections by Theorem \ref{AA subgroup of AE}(a), and $S_E$ is the isomorphism given in subsection \ref{section exchange graph}. Then the morphism $i\circ S_{\A}=S_E \circ i'$ is injective, thus $S_{\A}$
is injective, that is, $Aut(\A_{Q'})$ is a subgroup of $Aut(\A_Q)$.\\

We consider the rooted cluster algebra $(Q',\x',\A_{Q'})$ and the rooted cluster algebra $(Q,\x,\A_Q)$. Assume that $\x=\{x_1,x_2,\cdot
\cdot \cdot, x_n\}$ and $\x'=\{x_1,x_2,\cdot \cdot \cdot, x_{n}, x_{n+1},\cdot \cdot \cdot, x_{n+m}\}$, where $x_{n+1},\cdot \cdot \cdot,
x_{n+m}$ are frozen variables of $\A_{Q'}$. Define $\Phi: \A_{Q'} \to \A_Q$ as the specialization, that is, specializing frozen variables in
$\A_{Q'}$ to the integer $1$. It is clearly that $\Phi(\x')=\x\sqcup \{1\}$.
It is proved in \cite{CZ14}(Proposition 2.39) that $\Phi$ is a surjective rooted cluster morphism from $(Q',\x',\A_{Q'})$ to $(Q,\x,\A_Q)$.
Let $f$ be an element in $Aut(\A_Q)$. We define $S_\A(f)=\Phi f \Phi^{-1}: \A_Q \to \A_Q$, then it is a ring homomorphism. To prove that $S_\A(f)$ is an cluster automorphism on $\A_Q$, we show that it satisfies the conditions in Proposition \ref{Prop of CA}(2). This easily follows from that $f$ is a cluster automorphism. Moreover, it is not hard to see that $S_\A$ is a group homomorphism. Finally, the above diagram is commutative since $S_E$ and $S_\A$ are both defined by
specializations. Then $Aut(\A_{Q'})$ is a subgroup of $Aut(\A_Q)$. The injection $S_\A: Aut(\A_{Q'})\to Aut(\A_Q)$ induces an injection from
$Aut^+(\A_{Q'})$ to $Aut^+(\A_Q)$. Thus we have done.\\

(b) This follows from Theorem \ref{AA subgroup of AE}(b) and (a).
\end{proof}

\begin{cor}\label{corollary of UCA}
\begin{itemize}
\item[(a)] For a cluster algebra with principal coefficients, excepting type $A_1$, its cluster automorphism group is a subgroup of the cluster automorphism group of its principal part cluster algebra.
\item[(b)] For a universal geometric cluster algebra, excepting type $A_1$, its cluster automorphism group is a subgroup of the cluster automorphism group of its principal part cluster algebra.
\end{itemize}
\end{cor}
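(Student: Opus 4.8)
The plan is to obtain both statements as immediate consequences of Theorem \ref{CCA subgroup of CA}(a), feeding into it the gluing-freeness established in Propositions \ref{pgf of PCA} and \ref{pgf of UCA}. In each case the strategy is the same: identify the relevant ice quiver $Q'$, verify that it is gluing free, check that its principal part $Q$ has at least two exchangeable vertices, and then read off the subgroup inclusion $Aut(\A_{Q'})\leqslant Aut(\A_Q)$, recalling that $\A_Q$ is exactly the principal part cluster algebra.

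For part (a), let $\A^{pr}$ be a cluster algebra with principal coefficients, so that its initial exchange matrix is $B^{pr}={B\choose I_n}$ for some skew-symmetric $B$, and let $Q'$ denote its ice quiver; the principal part of $Q'$ is the quiver $Q$ determined by $B$. First I would invoke Proposition \ref{pgf of PCA} to conclude that $Q'$ is prime gluing free, and hence gluing free by the Remark following Definition \ref{definition gluing free cluster algebras}. The hypothesis that we exclude type $A_1$ is precisely what guarantees that $Q$ has at least two exchangeable vertices, since a single exchangeable vertex, which in the absence of loops carries the zero exchange matrix, gives exactly type $A_1$. With both conditions in hand, Theorem \ref{CCA subgroup of CA}(a) applies verbatim and yields that $Aut(\A^{pr})$ is a subgroup of the cluster automorphism group of the principal part cluster algebra $\A_Q$.

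For part (b), the argument is formally identical, with Proposition \ref{pgf of UCA} replacing Proposition \ref{pgf of PCA}: the ice quiver of a universal geometric cluster algebra is prime gluing free, hence gluing free, and excluding type $A_1$ again ensures that its principal part has at least two exchangeable vertices. Theorem \ref{CCA subgroup of CA}(a) then delivers the desired inclusion.

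I do not expect any genuine obstacle here, since the corollary is a clean assembly of the three earlier results. The only point demanding a moment's attention is the role of the $A_1$ exception: one must observe that it is not a cosmetic restriction but is exactly the ``at least two vertices in the principal part'' hypothesis of Theorem \ref{CCA subgroup of CA}, and that this hypothesis cannot be dropped, as Example \ref{exm:group of 4-gon} exhibits a single-exchangeable-vertex situation in which the conclusion genuinely fails.
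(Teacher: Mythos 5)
Your proposal is correct and follows exactly the same route as the paper's proof: both parts are obtained by combining Proposition \ref{pgf of PCA} (respectively Proposition \ref{pgf of UCA}), which gives prime gluing freeness and hence gluing freeness, with Theorem \ref{CCA subgroup of CA}(a), the type $A_1$ exclusion serving precisely to meet the ``at least two vertices in the principal part'' hypothesis. Your additional observations (the remark after Definition \ref{definition gluing free cluster algebras} for the passage from prime gluing free to gluing free, and Example \ref{exm:group of 4-gon} showing the necessity of the rank hypothesis) merely make explicit what the paper leaves implicit.
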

\begin{proof}
It is prove in Proposition \ref{pgf of PCA} and Proposition \ref{pgf of UCA} that these two kinds of cluster algebras are prime gluing free,
and thus gluing free. Then the result follows from above theorem.
\end{proof}
\begin{exm}\label{universal subgroup}
We consider the cluster algebra $\A_{Q}$ of type $A_2$ and the two of its universal geometric cluster algebras $\A_{Q'}$ and $\A_{Q''}$ in Example
\ref{two univ algs}. Then the cluster automorphism groups $Aut(\A_{Q'})$ and $Aut(\A_{Q''})$ are both subgroups of $Aut(\A_{Q})$ which is
isomorphic to $D_5$. A straightforward compute shows that $Aut(\A_{Q'})\cong D_5$ and $Aut(\A_{Q''})=\{id\}$. We prove in the subsequent paper \cite{CZ15a} that $Aut(\A)$ is always isomorphic to $Aut(\A^{univ})$ for a cluster algebra $\A$ of finite type and its $FZ$-universal cluster algebra $\A^{univ}$.
\end{exm}

For a gluing free cluster algebra, its cluster automorphism group may be a proper subgroup of the cluster automorphism group of its principal part cluster algebra (see Example \ref{AA is proper of AE for A3 type with coefficients} and Example \ref{universal subgroup}). We show in the following theorem that for most cluster algebras associated to surfaces, the coefficients do not affect the cluster automorphism groups.

\begin{thm}\label{theorem SCA}
Let $(S,M)$ be an oriented marked Riemann surface satisfies the Assumption 1, and not be a 4-gon. Let $\A(S,M)$ and $\A^{b}(S,M)$ be the cluster
algebras associated to $(S,M)$ without coefficients and with coefficients respectively. Then there are isomorphisms $Aut^+(\A(S,M))\cong
Aut^+(\A^{b}(S,M))$ and $Aut(\A(S,M))\cong Aut(\A^{b}(S,M))$.
\end{thm}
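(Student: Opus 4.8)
The plan is to realize the map that specializes the boundary coefficients to $1$ as an isomorphism on automorphism groups, with surjectivity supplied by the geometric realization of cluster automorphisms by homeomorphisms of $(S,M)$.

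First I would fix an ideal triangulation $T$ of $(S,M)$ with no self-folded triangles, which exists by Lemma \ref{lem in Surface}, and let $T^{b}$ be the associated triangulation with boundary, so that $Q_{T}$ is the principal part of the ice quiver $Q_{T^{b}}$. Since $(S,M)$ is not a $4$-gon, Proposition \ref{prop of SM pgf} shows that $\A^{b}(S,M)$ is gluing free, and (excluding the degenerate small surfaces) $Q_{T}$ has at least two exchangeable vertices. Theorem \ref{CCA subgroup of CA}(a) then yields injective group homomorphisms $S_{\A}\colon Aut(\A^{b}(S,M)) \hookrightarrow Aut(\A(S,M))$ and $Aut^{+}(\A^{b}(S,M)) \hookrightarrow Aut^{+}(\A(S,M))$, both given by specializing the frozen (boundary) variables to $1$. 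The whole problem thus reduces to proving that these injections are surjective.

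For the direct parts I would invoke the identification $\mmg \cong Aut^{+}(\A(S,M))$ recalled before the statement, which is valid under Assumption 1. Every element of $\mmg$ is represented by an orientation-preserving homeomorphism $h \in \homeo$ together with a change of tags; since $h$ fixes $M$ setwise and carries the boundary to the boundary, it sends the tagged triangulation with boundary $T^{b}$ to another one $h(T^{b})$, taking boundary arcs to boundary arcs and preserving their incidences with the internal arcs. Hence $h$ induces an isomorphism of ice quivers $Q_{T^{b}} \cong Q_{h(T^{b})}$ and therefore a direct cluster automorphism of $\A^{b}(S,M)$. This produces a homomorphism $\mmg \to Aut^{+}(\A^{b}(S,M))$ whose composition with the injection into $Aut^{+}(\A(S,M)) \cong \mmg$ is the identity of $\mmg$, since both maps are defined by the same action on triangulations and the specialization merely forgets the boundary arcs. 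Consequently the injection $Aut^{+}(\A^{b}(S,M)) \hookrightarrow Aut^{+}(\A(S,M))$ is onto, which gives the first isomorphism.

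It remains to upgrade this to the full groups. By Lemma \ref{index of direct auto} the subgroups $Aut^{+}$ have index at most two in $Aut$ on both sides, and $S_{\A}$ maps direct (resp. opposite) automorphisms to direct (resp. opposite) ones; so it suffices to show that $\A(S,M)$ admits an opposite cluster automorphism if and only if $\A^{b}(S,M)$ does. An opposite cluster automorphism of $\A(S,M)$ sends $Q_{T}$ to $Q_{T'}^{op}$ for some tagged triangulation $T'$; applying the same classification reasoning to the opposite orientation, this symmetry is realized by an orientation-reversing self-homeomorphism of $(S,M)$, which again preserves the boundary and hence induces an opposite cluster automorphism of $\A^{b}(S,M)$ as in the previous paragraph. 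The converse is immediate from injectivity of $S_{\A}$. Thus the two indices coincide, and together with $Aut^{+}(\A^{b}(S,M)) \cong Aut^{+}(\A(S,M))$ this forces $Aut(\A^{b}(S,M)) \cong Aut(\A(S,M))$.

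The main obstacle is precisely this surjectivity: a priori a cluster automorphism of the coefficient-free algebra is only a symmetry of the internal arcs, and one must extend it to a symmetry of the internal \emph{and} boundary arcs compatible with all incidences. The geometric realization by homeomorphisms is what makes this automatic, since a homeomorphism acts on all of $(S,M)$ and cannot detach the boundary arcs from the internal ones. Assumption 1 is exactly the hypothesis ensuring that every direct cluster automorphism is so realized (it removes the degenerate surfaces where the mapping-class-group description fails), while excluding the $4$-gon removes the only case in which the boundary arcs become strictly glueable and would contribute spurious symmetric-group factors.
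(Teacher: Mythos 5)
Your proposal is correct and follows essentially the same route as the paper: injectivity from Proposition \ref{prop of SM pgf} together with Theorem \ref{CCA subgroup of CA}, surjectivity on the direct parts via the identification $\mmg\cong Aut^+(\A(S,M))$ and the fact that mapping classes preserve boundary arcs, and then the index-two argument from Lemma \ref{index of direct auto} to pass from $Aut^+$ to $Aut$. The only cosmetic difference is that where the paper asserts $Q_{T^{b}}\cong Q_{T'^{b}}^{op}$ directly ``by the orientation and the combinatorics of the surface,'' you realize the opposite automorphism by an orientation-reversing homeomorphism, which is the same idea stated geometrically.
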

\begin{proof}
For the surface satisfies the Assumption 1, its marked mapping class group $\mmg$ are isomorphic to the direct cluster automorphism group
$Aut^+(\A(S,M))$. On the one hand, it follows from Proposition \ref{prop of SM pgf} and Theorem \ref{CCA subgroup of CA} that
$Aut^+(\A^{b}(S,M))$ is a subgroup of $Aut^+(\A(S,M))$, and thus a subgroup of $\mmg$. On the other hand, it is clearly that an element in the
marked mapping class group $\mmg$ maps the boundaries to the boundaries and maintains the combinatorics of the tagged triangulations with
boundaries. Thus it not only induces a cluster automorphism of $\A(S,M)$, but also induces a cluster automorphism of $\A^{b}(S,M)$. Therefore
the three groups $Aut^+(\A(S,M))$, $\mmg$ and $Aut^+(\A^{b}(S,M))$ are isomorphic. For the second isomorphism, note that if
$Aut^+(\A(S,M))\cong Aut(\A(S,M))$ then $Aut(\A(S,M)) \cong Aut(\A^{b}(S,M))$. Now we assume that $Aut^+(\A(S,M))$ is a proper subgroup of $
Aut(\A(S,M))$, then by Lemma \ref{index of direct auto} its index is $2$. Thus there exist two triangulations $T$ and $T'$ of $(S,M)$ such that
$Q_{T}\cong Q_{T'}^{op}$. Then by the orientation and the combinatorics of the surface, we have $Q_{T^{b}}\cong Q_{T'^{b}}^{op}$. Thus the
index of $Aut^{+}(\A^{b}(S,M))$ in $Aut(\A^{b}(S,M))$ is also 2. On the other hand, there is an isomorphism $Aut^+(\A(S,M))\cong
Aut^+(\A^{b}(S,M))$ from the first step, and $Aut(\A^{b}(S,M))$ is a subgroup of $Aut(\A(S,M))$ by Theorem \ref{CCA subgroup of CA}. Thus we
have an isomorphism between $Aut(\A(S,M))$ and $Aut(\A^{b}(S,M))$.
\end{proof}

\begin{thm}\label{thm:principal coefficients}
Let $Q$ be a quiver with at least two vertices. Let $\A^{pr}$ be the principal coefficient cluster algebra of $Q$. Then $Aut(\A^{pr})\cong Aut^{+}(\A^{pr})\cong Aut(B)\cong Aut(B^{pr})$.
\end{thm}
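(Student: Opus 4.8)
The plan is to establish the chain of isomorphisms from right to left: first the purely combinatorial identity $Aut(B)\cong Aut(B^{pr})$, then an isomorphism $Aut(B^{pr})\cong Aut^{+}(\A^{pr})$, and finally $Aut^{+}(\A^{pr})=Aut(\A^{pr})$. Throughout, write $Q^{pr}$ for the initial ice quiver of $\A^{pr}$, whose vertices are $\{1,\dots,n\}\sqcup\{n+1,\dots,2n\}$ and whose frozen part is $I_n$, so that the only arrow incident to a frozen vertex $n+j$ is the single arrow $n+j\to j$.

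First I would check $Aut(B)\cong Aut(B^{pr})$ by hand. An automorphism of the ice quiver $Q^{pr}$ is a pair $(\sigma,\sigma')$ consisting of a permutation $\sigma$ of the exchangeable vertices and a permutation $\sigma'$ of the frozen ones. The frozen-part condition reads $\delta_{\sigma'(i),\sigma(j)}=\delta_{ij}$ for all $i,j$, which upon setting $i=j$ forces $\sigma'=\sigma$; the remaining exchangeable-part condition is precisely $\sigma\in Aut(B)$. Hence $\sigma\mapsto(\sigma,\sigma)$ is an isomorphism $Aut(B)\cong Aut(B^{pr})$. Each $\sigma\in Aut(B^{pr})$ relabels the initial seed and thereby induces a direct cluster automorphism of $\A^{pr}$; distinct relabellings act differently on the initial cluster, so this defines an injective homomorphism $\iota\colon Aut(B^{pr})\hookrightarrow Aut^{+}(\A^{pr})$.

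The substance of the proof is the surjectivity of $\iota$, where I would use the rigidity coming from principal coefficients (recall from Proposition \ref{pgf of PCA} that $\A^{pr}$ is already prime gluing free). With principal coefficients the frozen part of the extended exchange matrix of any seed is its $C$-matrix, and its columns, the $c$-vectors, are sign-coherent. Given $f\in Aut^{+}(\A^{pr})$, by Proposition \ref{Prop of CA} $f$ carries the initial seed to a seed $\S'$ whose ice quiver is isomorphic to $Q^{pr}$. Matching frozen parts and using that in $Q^{pr}$ each frozen vertex emits exactly one arrow, all of them pointing frozen$\to$exchangeable, forces the $C$-matrix of $\S'$ to be a permutation matrix with nonnegative entries. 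I would then invoke that the initial seed is the unique seed all of whose $c$-vectors are positive standard basis vectors, i.e.\ the unique seed whose $C$-matrix is a permutation of $I_n$. Thus $\S'$ is the initial seed up to relabelling, $f$ is a relabelling, and $f$ lies in the image of $\iota$; this gives $Aut^{+}(\A^{pr})\cong Aut(B^{pr})$.

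The hard part will be $Aut(\A^{pr})=Aut^{+}(\A^{pr})$, i.e.\ the exclusion of opposite cluster automorphisms. By Lemma \ref{index of direct auto} the index of $Aut^{+}(\A^{pr})$ in $Aut(\A^{pr})$ is at most two, and it drops to one exactly when no cluster automorphism carries $Q^{pr}$ to a quiver isomorphic to $(Q^{pr})^{op}$. Repeating the frozen-part analysis, such an automorphism would demand a seed $\S''$ whose $C$-matrix is minus a permutation matrix, that is, all of whose $c$-vectors are negative standard basis vectors (reflecting that $(Q^{pr})^{op}$ has all its frozen arrows pointing exchangeable$\to$frozen). The plan is to rule this configuration out and so collapse the index-two gap. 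I expect this to be the genuine obstacle: an all-negative permutation $C$-matrix is not excluded by sign-coherence alone the way the all-positive case is, so this step needs a dedicated argument and careful attention to the low-rank and finite-type situations, where precisely such a seed is most likely to appear. Once opposite automorphisms are excluded, combining the three isomorphisms yields $Aut(\A^{pr})\cong Aut^{+}(\A^{pr})\cong Aut(B)\cong Aut(B^{pr})$.
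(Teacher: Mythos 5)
Your first three steps are correct, and up to that point you are essentially reconstructing the paper's own argument: the paper likewise notes $Aut(B)\cong Aut(B^{pr})$, lets such automorphisms act by relabelling the initial seed, and then appeals to a rigidity statement (a seed of $\A^{pr}$ whose frozen part is a nonnegative permutation matrix must be the initial seed up to relabelling, which the paper extracts from \cite{BDP14}) to conclude that every direct automorphism is a relabelling. So your proposal genuinely proves $Aut^{+}(\A^{pr})\cong Aut(B)\cong Aut(B^{pr})$. The gap is exactly where you placed it: you never prove $Aut(\A^{pr})=Aut^{+}(\A^{pr})$; you only announce a plan to exclude seeds whose $C$-matrix is the negative of a permutation matrix. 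As written, the proposal is therefore incomplete.

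However, that last step cannot be completed, because the statement it is meant to establish is false; your suspicion that ``precisely such a seed is most likely to appear'' in low rank and finite type is exactly right. A seed whose $C$-matrix equals $-P$ for a permutation matrix $P$ is precisely the endpoint of a maximal green sequence, and by \cite{BDP14} its principal part is then isomorphic to $Q$ (not $Q^{op}$); consequently its ice quiver is isomorphic to $(Q^{pr})^{op}$ whenever $Q\cong Q^{op}$, which holds for every linearly oriented $A_n$ and, more generally, for every acyclic quiver isomorphic to its opposite. Concretely, take $Q\colon 1\to 2$, so $Q^{pr}$ has arrows $1\to 2$, $3\to 1$, $4\to 2$. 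Then $\mu_1(Q^{pr})$ has arrows $2\to1,\,1\to3,\,3\to2,\,4\to2$; next $\mu_2\mu_1(Q^{pr})$ has arrows $1\to2,\,2\to3,\,4\to1,\,2\to4$; and $\mu_1\mu_2\mu_1(Q^{pr})$ has arrows $2\to1,\,2\to3,\,1\to4$, which is isomorphic to $(Q^{pr})^{op}$ (arrows $2\to1,\,1\to3,\,2\to4$) by fixing $1,2$ and swapping the frozen vertices $3,4$. By Proposition \ref{Prop of CA}, sending the initial cluster to the cluster of this seed together with $x_3\leftrightarrow x_4$ defines an opposite cluster automorphism of $\A^{pr}$; explicitly it is the involution $x_1\leftrightarrow (x_1x_4+1)/x_2$, $x_2\leftrightarrow (x_2+x_3+x_1x_3x_4)/(x_1x_2)$, $x_3\leftrightarrow x_4$, which one checks preserves all exchange relations. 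Hence for $A_2$ one has $Aut(\A^{pr})\supsetneq Aut^{+}(\A^{pr})\cong Aut(B)=\{\mathrm{id}\}$, contradicting the theorem. The paper's own treatment of this step, namely the assertion (attributed to \cite{BDP14}, Proposition 2.10 and Corollary 2.12) that no seed of $E_{\A^{pr}}$ has ice quiver isomorphic to $(Q^{pr})^{op}$, is a misapplication of that reference: what \cite{BDP14} proves is that the endpoint of a maximal green sequence has ice quiver isomorphic to the coframed quiver, and this \emph{produces} seeds isomorphic to $(Q^{pr})^{op}$ whenever $Q\cong Q^{op}$ and a maximal green sequence exists, rather than excluding them. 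What survives of the theorem — and what your first three steps prove — is $Aut^{+}(\A^{pr})\cong Aut(B)\cong Aut(B^{pr})$; the further equality $Aut(\A^{pr})=Aut^{+}(\A^{pr})$ requires an extra hypothesis, for instance that $Q\not\cong Q^{op}$ or that no seed has $C$-matrix equal to minus a permutation matrix.
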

\begin{proof}
Firstly,  note that $Aut(B)\cong Aut(B^{pr})$, and an automorphism of $B$, or equivalently, an automorphism of $B^{pr}$ induces an automorphism of $\A^{pr}$ by exchanging the initial cluster variables of $\A^{pr}$. Thus $Aut(B)\cong Aut(B^{pr})\subseteq Aut(\A^{pr})$. Secondly, it is proved in \cite{BDP14} (see Proposition 2.10 and Corollary 2.12) that in seeds on exchange graph $E_{\A^{pr}}$, excepting $Q^{pr}$ itself, there exist no ice quivers which are isomorphic to $Q^{pr}$, and there exist no ice quivers which are isomorphic to the opposite quiver of $Q^{pr}$. Thus any automorphism $\sigma\in Aut(\A^{pr})$ is direct and must map $Q^{pr}$ to itself. Therefore $Aut(\A^{pr})\cong Aut^{+}(\A^{pr})\cong Aut(B)\cong Aut(B^{pr})$.
\end{proof}

\section*{Acknowledgements}
The authors wish to thank Dong Yang, Wuzhong Yang, Jie Zhang and Yu Zhou for helpful discussions on the topic, and thank Bangming Deng, Yang Han, Yanan Lin, Liangang Peng and Jie Xiao for valuable comments. The first author wish to thank Thomas Br\"ustle for useful discussions during a conference \cite{B14} in August 2014.

\end{document}